\newcommand{\TheTitle}{On the arbitrarily long-term stability of conservative methods} 
\newcommand{\TheAuthors}{Andy T. S. Wan, and Jean-Christophe Nave}
\headers{\TheTitle}{\TheAuthors}
\title{{\TheTitle}\thanks{This work was supported by the NSERC Discovery program and the Centre de Recherches Math\'{e}matiques.}}
\author{
  Andy T. S. Wan\thanks{Department of Mathematics and Statistics, McGill University, Montr\'eal, QC, H3A 0B9, Canada
    (\email{andy.wan@mcgill.ca}).}
    \and
  Jean-Christophe Nave\thanks{Department of Mathematics and Statistics, McGill University, Montr\'eal,  QC, H3A 0B9, Canada (\email{jcnave@math.mcgill.ca}).}
}
\newtheorem{example}{Example}
\def\[#1\]{\begin{align*}#1\end{align*}}
\newcommand\norm[1]{\left\lVert#1\right\rVert}
\newcommand\cont[1]{[#1]}
\newcommand\contdep[2]{[#1]_{#2}}
\newcommand\discdep[2]{\{#1\}_{#2}}
\newcommand{\bb}{\boldsymbol }
\begin{document}

\maketitle

\begin{abstract}
We show the arbitrarily long-term stability of conservative methods for autonomous ODEs. Given a system of autonomous ODEs with conserved quantities, if the preimage of the conserved quantities possesses a bounded locally finite neighborhood, then the global error of any conservative method with the uniformly bounded displacement property is bounded for all time, when the uniform time step is taken sufficiently small. On finite precision machines, the global error still remains bounded and independent of time until some arbitrarily large time determined by machine precision and tolerance. The main result is proved using elementary topological properties for discretized conserved quantities which are equicontinuous. In particular, long-term stability is also shown using an averaging identity when the discretized conserved quantities do not explicitly depend on time steps. Numerical results are presented to illustrate the long-term stability result. 
\end{abstract}

\begin{keywords}
conservative, conservative methods, long-term stability, finite difference, multiplier method, autonomous system
\end{keywords}

\begin{AMS}
65L05  
65L12, 
65L20, 
65L70,  
65P10, 
65Z05 
\end{AMS}


\section{Introduction}
In recent years, there has been vast renewed interests in structure-preserving discretizations; that is numerical methods which preserve underlying structures of differential equations at the discrete level \cite{MarWes01, Olv01, Hir03, leim04, HaiLubWan06, boch06, ArnFalWin06, ChrMunOwr11}. One primary motivation for these discretizations is, for some class of problems, the ability to preserve certain features inherent to the continuous problem is a determining factor for acceptance of numerical results. For instance, for ODEs with a Hamiltonian structure, preservation of phase space volume is a desirable feature for the discrete flow of symplectic methods \cite{HaiLubWan06}. For systems arising from variational formulation, the variational principle is preserved by variational integrators via extremizing the action integral over a finite dimensional discrete space \cite{MarWes01}. Beyond this primary motivation, structure-preserving discretizations can also possess additional stability and long-term properties. For example, symplectic methods have been shown to possess favorable long-term properties, such as near conservation of energy over an exponentially long time \cite{BenGio94}. Moreover, for completely integrable Hamiltonian systems, symplectic methods nearly conserve all first integrals depending only on action variables and have at most linear growth in the global error over an exponentially long time \cite{CalHai95,CalSan93,HaiLubWan06}. 

In the present work, we focus on the question pertaining to stability properties on the class of \emph{conservative methods}; specifically discretizations which exactly preserve conserved quantities at the discrete level. Conservative methods for ODEs and PDEs have a long history in numerical analysis \cite{CouFriLew67,LaBGre75, SimTarWon92, LiVu95, Gon96, QuiMcL08, FurMat10, DahOwr11, DahOwrYag11, CelAE12}. Traditionally, conservative methods have been proposed for various types of ODEs with special forms of conserved quantities \cite{Sha86, coo87, KanZai95, MclQui04, Cel09, BruIavTri15}.

To the best knowledge of the authors, there are two general classes of exactly conservative methods for ODEs. One is the traditional projection method where the main idea is to project the discrete solution back onto the level set of the invariants after advancing some number of time steps with a standard numerical method \cite{HaiLubWan06}. The other general conservative method is called the discrete gradient method \cite{Qui97, McL99} which is based on rewriting the ODE system in a skew-gradient form so that first integrals can be conserved discretely. More recently, another general conservative method, called the multiplier method \cite{WanBihNav17a}, has been proposed to systematically discretize ODEs using generalizations of integrating factors. 

Motivated by the general applicability of these conservatives method, it is important, in our view, to provide a general stability result for conservative methods; specifically for the case of ODEs. On the outset, this may seem like an impossible task as most discretizations constructed by these methods do not possess an a priori common structure and are often nonlinear in nature. Fortunately, these difficulties can be resolved when one takes the point of view that \emph{long-term stability} is intimately connected with the topology of the conserved quantities. Specifically for ODEs, under some appropriate conditions, we will show using basic topological arguments that the global error of a conservative method is \emph{bounded for all time}.

This paper is organized as follows. In Section 2, we review basic properties of conservative methods. It is shown that equicontinuity of discretized conserved quantities plays a central role in many estimates used in subsequent sections. In Section 3, we review some elementary topology results relevant to our current discussion and show a key separation theorem which form the basis for the main stability result. Moreover, we discuss the practical limitation of the main result in finite precision arithmetic. In Section 4, we verify numerically the main stability result for various (nonlinear) multistep methods and make comparison with traditional and symplectic methods. Finally, in the appendix, we show the uniformly bounded displacement property for a conservative 1-step method from Section 4.

\section{Preliminaries}
\label{sec:prelim}
\subsection{Definitions and notations}
Let $n \in \mathbb{N}$ and $U$ be an open subset of $\mathbb{R}^n$. Suppose $\bb f: U\rightarrow \mathbb{R}^n$ is locally Lipschitz continuous. Then by Picard's theorem, for any $\bb x_0\in U$, there exists an open interval $I = (-T,T)$ such that the autonomous ODE,
\begin{align}
\bb F\contdep{\bb x}{t} := \dot{\bb x}(t) - \bb f(\bb x(t)) &= \bb 0\label{eq:ODE} ,\\
\bb x(0) &= \bb x_0, \nonumber
\end{align}
has an unique solution $\bb x \in C^1(I;U)$. For brevity, we used the notation $\contdep{\bb x}{t}$ denoting dependence on $t, \bb x(t)$ and higher derivatives of $\bb x(t)$.

For $1\leq m \leq n$. We assume the ODE \eqref{eq:ODE} has $m$ conserved quantities; that is there exists a continuous vector-valued function $\bb \psi: U\subset\mathbb{R}^n\rightarrow \mathbb{R}^m$ such that for the constant $\bb c = \bb \psi(\bb x_0)$, the unique solution $\bb x \in C^1(I;U)$ satisfies for $t\in I$,
\begin{equation}
\bb \psi(\bb x(t)) = \bb c. \label{contCons}
\end{equation}
As to be discussed later, the preimage of $\bb c$, denoted as $\bb \psi^{-1}(\{\bb c\})$, can be written as,
\begin{equation*}
\bb \psi^{-1}(\{\bb c\}) = \bigcup_{j\in J} X_{j}, 
\end{equation*} for some countable\footnote{The number of connected components of any subsets in $\mathbb{R}^n$ is at most countable; See \cite{Mun00} or Section \ref{sec:stability}.} index set $J$ with each $X_j$ as a connected component of $\bb \psi^{-1}(\{\bb c\})$.
Denote the connected component containing $\bb x_0$ as $X_0$. If $X_0$ is compact, then by standard theory of ODEs, the local solution $\bb x$ can be extended to a global solution for all $t\in\mathbb{R}$. 

\begin{theorem}
Suppose the connected component $X_0$ containing $\bb x_0$ is compact, then the unique solution $\bb x \in C^1(I;U)$ to \eqref{eq:ODE} can be extended for all $t\in\mathbb{R}$ and so $\bb x\in C^1(\mathbb{R};X_0)$. \label{boundedness}
\end{theorem}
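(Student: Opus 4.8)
The plan is to combine the standard \emph{escape-from-compact-sets} continuation principle for ODEs with the observation that conservation of $\bb\psi$ traps the trajectory inside $X_0$. Let $\bb x$ denote the maximal solution of \eqref{eq:ODE} through $\bb x_0$, defined on its maximal interval of existence $(\omega_-,\omega_+)$ with $-\infty\le\omega_-<0<\omega_+\le+\infty$; by uniqueness it restricts to the given $\bb x\in C^1(I;U)$ on $I$, so it suffices to prove $\omega_-=-\infty$ and $\omega_+=+\infty$.

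First I would pin down where the trajectory lives. The set $\Gamma:=\{\bb x(t):t\in(\omega_-,\omega_+)\}$ is the continuous image of an interval, hence connected, and it contains $\bb x_0$. By the conservation law \eqref{contCons}, $\bb\psi(\bb x(t))=\bb\psi(\bb x_0)=\bb c$ for all $t$ in the maximal interval, so $\Gamma\subseteq\bb\psi^{-1}(\{\bb c\})$. Since the connected component $X_0$ is the largest connected subset of $\bb\psi^{-1}(\{\bb c\})$ containing $\bb x_0$, any connected subset through $\bb x_0$ lies in it, whence $\Gamma\subseteq X_0$. Being a connected component of the closed set $\bb\psi^{-1}(\{\bb c\})$, $X_0$ is automatically closed in $\mathbb{R}^n$, so the compactness hypothesis really just adds boundedness; in any case $X_0$ is a fixed compact set containing all of $\Gamma$, and — reading the hypothesis together with the standing setup so that $X_0\subseteq U$ — it is a compact subset of the domain $U$.

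Next I would invoke the continuation theorem: if $\omega_+<\infty$, then the maximal solution eventually leaves every compact subset of $U$, i.e.\ for each compact $K\subseteq U$ there is $T<\omega_+$ with $\bb x(t)\notin K$ for $t\in(T,\omega_+)$. Taking $K=X_0$ contradicts $\Gamma\subseteq X_0$, so $\omega_+=+\infty$; the identical argument as $t\downarrow\omega_-$ gives $\omega_-=-\infty$. Hence $\bb x$ is defined on all of $\mathbb{R}$. Finally, on $\mathbb{R}$ we still have $\dot{\bb x}(t)=\bb f(\bb x(t))$ with $\bb f$ and $\bb x$ continuous, so $\dot{\bb x}$ is continuous and $\bb x\in C^1(\mathbb{R};U)$; and the trapping argument of the previous paragraph, now applied on all of $\mathbb{R}$, shows the range lies in $X_0$, i.e.\ $\bb x\in C^1(\mathbb{R};X_0)$.

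The one point that genuinely needs care — the main obstacle — is the interplay between $X_0$ and $U$: the continuation theorem only forbids escape from compact subsets \emph{of $U$}, so one must know that a compact neighborhood of $\Gamma$ (in practice $X_0$ itself) sits inside $U$. If $X_0$ were merely a compact subset of $\mathbb{R}^n$ that protruded through $\partial U$, the trajectory could still run to $\partial U$ in finite time and the conclusion would fail; thus the compactness hypothesis on $X_0$ has to be understood in tandem with $X_0\subseteq U$ (or the weaker statement that the trajectory stays in a fixed compact subset of $U$). Once that is granted, the remainder is routine bookkeeping with connectedness and the maximal-interval formalism.
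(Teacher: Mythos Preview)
Your argument is correct and is precisely the standard extension argument the paper has in mind. Note that the paper does not actually supply a proof of this theorem: it simply states the result and attributes it to ``standard theory of ODEs,'' so your write-up fills in exactly the details the authors chose to omit, via the same mechanism (conservation traps the orbit in the compact set $X_0$, and the escape-from-compact-sets continuation principle then rules out a finite maximal time). Your flagging of the point $X_0\subseteq U$ is apt; the paper leaves this implicit in its standing hypotheses.
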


Let $\tau>0$ be a time step size and consider the set of uniform time steps of $\{t_k=k\tau:k\in\mathbb{N}\}$. For a given $\mu\in\mathbb{N}$, we shall consider general $\mu$-step discretizations or $\mu$-step methods (we use both terms interchangeably). In particular, let $\bb F^\tau: U\times\cdots\times U\subset\mathbb{R}^{n(\mu+1)} \rightarrow \mathbb{R}^n$ be a continuous vector-valued function depending on $\tau$. Then the $\mu$-step discretization is given by, 
\begin{align}
\bb F^\tau\discdep{\bb x^\tau}{k} :=\bb F^\tau(\bb x_{k+1},\bb x_{k},\dots,\bb x_{k-\mu+1}) = 0, \label{eq:disc}
\end{align}
Similar to the continuous case, we employ the notation $\discdep{\bb x^\tau}{k}$ to denote dependence on the successive approximation $\bb x_{k}$ at different time steps. Moreover, we shall consider discretizations which have the \emph{uniformly bounded displacement} property.

\begin{definition}
\label{def:LCP}
Let $K\subset \mathbb{R}^n$ be a compact subset and $r>0$. A $\mu$-step discretization \eqref{eq:disc} is said to have the {\bf uniformly bounded displacement} (UBD) property on $K$ with displacement $r$ if there exists $\tau_{c}>0$ depending only on $K$ and $r$ such that if $0<\tau<\tau_c$ and $\{\bb x_k, \dots, \bb x_{k-\mu+1}\} \subset K$ for each $k\geq \mu-1$, then the discretization \eqref{eq:disc} have an unique solution $\bb x_{k+1} \in \bigcap_{i=0}^{\mu-1} \overline{B_r(\bb x_{k-i})}$. A $\mu$-step discretization is said to have the UBD property if it has the UBD property for all compact $K\subset \mathbb{R}^n$ and $r>0$.
\end{definition}

Intuitively, for small enough time step $\tau$, a discretization with the UBD property always yields a solution $\bb x_{k+1}$ which cannot grow arbitrarily far away from the previous solutions $\{\bb x_k, \dots, \bb x_{k-\mu+1}\}$ for all $k\geq \mu-1$; that is their respective \emph{displacements are uniformly bounded}. 

It will be seen later that the UBD property plays an important role for showing long-term stability of conservative methods. 

\begin{definition}
A $\mu$-step discretization $\bb F^\tau$ is consistent to order $p>0$ with $\bb F$ if for $\bb x \in C^{p+1}(I;U)$ and each time step $t_k$, there exists a positive constant $C_F$ independent of $\tau$ such that,
\begin{equation*}
\norm{\bb F\contdep{\bb x}{t_k}-\bb F^\tau(\bb x(t_{k+1}), \bb x(t_{k}),\dots,\bb x(t_{k-\mu+1}))} \leq C_F(\norm{\bb x}_{C^{p+1}(\tilde{I}_k)})\tau^p,
\end{equation*}where $\tilde{I}_k = [t_{k-\mu+1},t_{k+1}]$ and $\displaystyle \norm{\bb x}_{C^{p+1}(\tilde{I_k})} = \max_{0\leq i\leq p+1} \norm{\frac{d^i \bb x}{dt^i}}_{L^\infty(\tilde{I}_k)}$.
\end{definition}
In practice, $C_F$ typically arises from Taylor expansion with remainder terms. Similarly, let $\bb\psi^\tau:U\times\cdots\times U\subset\mathbb{R}^{n\mu} \rightarrow \mathbb{R}^m$ be a continuous vector-valued function depending on $\tau$. 

\begin{definition}
The discrete conserved quantities $\bb \psi^\tau$ is consistent to order $p$ with $\bb \psi$ if for $\bb x \in C^{p}(I;U)$ and each time step $t_k$, there exists a positive constant $C_\psi$ independent of $\tau$ such that,
\begin{equation*}
\norm{\bb \psi(\bb x(t_k))-\bb \psi^\tau(\bb x(t_{k}),\dots,\bb x(t_{k-\mu+1}))} \leq C_\psi(\norm{\bb x}_{C^{p}(I_k)})\tau^p,
\end{equation*}where $I_k = [t_{k-\mu+1},t_{k}]$ and $\displaystyle \norm{\bb x}_{C^{p}(I_k)} = \max_{0\leq i\leq p} \norm{\frac{d^i \bb x}{dt^i}}_{L^\infty(I_k)}$
\end{definition}
\subsection{Equicontinuity and averaging identity}
If a family of discretized conserved quantities $\{\bb \psi^\tau\}_{0<\tau<\tau_0}$ is equicontinuous for some $\tau_0>0$, one immediate consequence is the following important relation between $\bb \psi$ and $\bb \psi^\tau$.
\begin{lemma}
Suppose $\bb \psi^\tau:U\times\cdots\times U\subset\mathbb{R}^{n \mu}\rightarrow \mathbb{R}^m$ is consistent to order $p$ with $\bb \psi$ and for some $\tau_0>0$, the family of functions $\{\bb \psi^\tau\}_{0<\tau<\tau_0}$ is equicontinuous. Then for any $\bb y\in U$,
\begin{equation*}
\bb \psi(\bb y) = \lim_{\tau\rightarrow 0}\bb \psi^\tau(\bb y,\dots, \bb y). \label{limitIdentity}
\end{equation*} \end{lemma}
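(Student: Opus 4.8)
The plan is to transfer the consistency estimate for $\bb\psi^\tau$, which is formulated along a $C^p$ curve, to the diagonal evaluation $\bb\psi^\tau(\bb y,\dots,\bb y)$ by letting the nodes of such a curve collapse onto $\bb y$ as $\tau\to 0$, and to use equicontinuity to keep control of $\bb\psi^\tau$ uniformly in $\tau$ while this collapse happens. The role of equicontinuity is exactly to supply a modulus of continuity that does not deteriorate as $\tau\to 0$, so that the arguments of $\bb\psi^\tau$ can be "frozen" at $\bb y$ simultaneously with the limit $\tau\to 0$.

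First I would fix $\bb y\in U$ and, since $U$ is open, choose $r>0$ with $B_r(\bb y)\subset U$ together with a fixed $C^p$ path $\bb\gamma$ on an interval $I\ni 0$ satisfying $\bb\gamma(0)=\bb y$ and $\bb\gamma(I)\subset B_r(\bb y)$; the constant path $\bb\gamma\equiv\bb y$ is the simplest admissible choice. For each sufficiently small $\tau$ I would apply the consistency inequality for $\bb\psi^\tau$ at the step index $k=\mu-1$, so that the $\mu$ nodes are $\bb\gamma(t_{\mu-1}),\dots,\bb\gamma(t_0)$ with $t_0=0$ and $I_{\mu-1}=[0,t_{\mu-1}]$. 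For $\tau$ small these nodes and the interval $I_{\mu-1}$ lie in $I$, and $\norm{\bb\gamma}_{C^p(I_{\mu-1})}\leq\norm{\bb\gamma}_{C^p(I)}$ is bounded uniformly in $\tau$, so the right-hand side $C_\psi\norm{\bb\gamma}_{C^p(I_{\mu-1})}\tau^p\to 0$; hence $\bb\psi^\tau(\bb\gamma(t_{\mu-1}),\dots,\bb\gamma(t_0))-\bb\psi(\bb\gamma(t_{\mu-1}))\to\bb 0$ as $\tau\to 0$.

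Next I would let $\tau\to 0$ and assemble three facts. Each node $\bb\gamma(t_j)=\bb\gamma(j\tau)\to\bb\gamma(0)=\bb y$ by continuity of $\bb\gamma$, and $\bb\psi(\bb\gamma(t_{\mu-1}))\to\bb\psi(\bb y)$ by continuity of $\bb\psi$. The decisive step is to invoke equicontinuity of $\{\bb\psi^\tau\}_{0<\tau<\tau_0}$ at the point $(\bb y,\dots,\bb y)$: given $\varepsilon>0$ there is $\delta>0$, independent of $\tau$, such that $\norm{\bb\psi^\tau(\bb z_1,\dots,\bb z_\mu)-\bb\psi^\tau(\bb y,\dots,\bb y)}<\varepsilon$ whenever $\norm{\bb z_i-\bb y}<\delta$ for all $i$; for $\tau$ small enough every node satisfies $\norm{\bb\gamma(t_j)-\bb y}<\delta$, so $\bb\psi^\tau(\bb\gamma(t_{\mu-1}),\dots,\bb\gamma(t_0))$ and $\bb\psi^\tau(\bb y,\dots,\bb y)$ differ by less than $\varepsilon$. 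A triangle inequality chaining $\bb\psi(\bb y)\to\bb\psi(\bb\gamma(t_{\mu-1}))$, then the consistency estimate, then the equicontinuity estimate yields $\limsup_{\tau\to 0}\norm{\bb\psi(\bb y)-\bb\psi^\tau(\bb y,\dots,\bb y)}\leq\varepsilon$, and since $\varepsilon>0$ is arbitrary the limit identity follows.

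The only mildly delicate bookkeeping is arranging a fixed admissible curve whose evaluation nodes and the relevant subinterval remain inside $I$, hence inside $U$, as $\tau\to 0$, with $C^p$ norm bounded on the shrinking subinterval — all immediate for the constant curve. The genuine content is conceptual rather than computational: one cannot pass to the limit inside $\bb\psi^\tau$ argument by argument because $\bb\psi^\tau$ itself varies with $\tau$, and it is the uniform modulus of continuity furnished by equicontinuity that legitimizes replacing the nearby nodes by $\bb y$ while sending $\tau\to 0$.
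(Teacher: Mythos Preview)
Your proposal is correct and follows essentially the same approach as the paper: pick a $C^p$ curve through $\bb y$, apply the consistency estimate along that curve, and use equicontinuity to replace the curve's nodes by the diagonal value $(\bb y,\dots,\bb y)$ via a triangle inequality. The only cosmetic difference is that the paper anchors the curve with $\bb x(t_k)=\bb y$ so that $\bb\psi(\bb x(t_k))=\bb\psi(\bb y)$ exactly, whereas you anchor at $t_0$ and invoke continuity of $\bb\psi$ for the extra term $\bb\psi(\bb\gamma(t_{\mu-1}))\to\bb\psi(\bb y)$; with your constant-path choice this distinction disappears anyway.
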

\begin{proof}
For any $\epsilon>0$ and $\bb y\in U$, pick a function $\bb x(t) \in C^{(p)}([t_{k-\mu+1},t_{k}])$ with $\bb x(t_k)=\bb y$. For fixed $t_k$, by continuity of $\bb x(t)$, there exists a positive constant $\tau_1$ such that if $0<\tau<\tau_1$, then $\bb x(t_{k-i})\in U$ for $i=0,\dots, \mu-1$. Thus, by equicontinuity and since all norms are equivalent on $\mathbb{R}^{n \mu}$, there exists a positive constant $\delta$ depending only on $\epsilon$ such that if $\displaystyle \max_{0\leq i\leq \mu-1} \norm{\bb y-\bb x(t_{k-i})} < \delta$,
\begin{equation*}
\norm{\bb \psi^\tau (\bb x(t_k), \bb x(t_{k-1}),\dots, \bb x(t_{k-\mu+1})) - \bb \psi^\tau(\bb y, \bb y,\dots, \bb y)}\leq \frac{\epsilon}{2},
\end{equation*} for all $0<\tau<\min\{\tau_0,\tau_1\}$. Moreover, by continuity of $\bb x$ again, there exists some positive constant $\tau_2$ such that if $0<\tau<\tau_2$, $\norm{\bb y-\bb x(t_{k-i})} < \delta$ for all $i=0,\dots,\mu-1$.
Combining together with consistency, this implies for $\displaystyle 0<\tau<\min\left\{\tau_0,\tau_1,\tau_2,\left(\frac{\epsilon}{2 C_\psi(\norm{\bb x}_{C^{p}(I_k)})}\right)^{\frac{1}{p}}\right\}$,
\begin{align*}
\norm{\bb \psi(\bb y) - \bb\psi^\tau(\bb y,\dots,\bb y)} & \leq \norm{\bb \psi(\bb x(t_k)) - \bb \psi^\tau (\bb x(t_k), \bb x(t_{k-1}),\dots, \bb x(t_{k-\mu+1}))}\\
&\hskip 3mm + \norm{\bb \psi^\tau (\bb x(t_k), \bb x(t_{k-1}),\dots, \bb x(t_{k-\mu+1})) - \bb\psi^\tau(\bb y,\bb y,\dots,\bb y)}\\
&\leq C_\psi(\norm{\bb x}_{C^{p}(I_k)}) \tau^p + \frac{\epsilon}{2} < \epsilon.
\end{align*}
In other words, the limit $\bb \psi^\tau(\bb y,\dots, \bb y)$ as $\tau\rightarrow 0$ exists and is equal to $\bb \psi(\bb y)$.
\end{proof}
If $\bb \psi^\tau$ does not depend on $\tau$ explicitly, then Theorem \ref{limitIdentity} follows immediately, as $\bb \psi^\tau$ is trivially equicontinuous on $U$. In fact, the following remarkably simple identity holds.
\begin{corollary}[Averaging Identity]
Suppose $\bb \psi^\tau:U\times\cdots\times U\subset\mathbb{R}^{n \mu}\rightarrow \mathbb{R}^m$ is consistent to order $p$ with $\bb \psi$ and assume $\bb \psi^\tau$ does not depend on $\tau$ explicitly. Then for any $\bb y\in \mathbb{R}^n$, \label{avgIdentity}
\begin{equation*}
\bb \psi(\bb y) = \bb \psi^\tau(\bb y,\dots, \bb y).
\end{equation*} 
\end{corollary}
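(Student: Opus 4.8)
The plan is to read the identity straight off the consistency estimate for $\bb\psi^\tau$ evaluated on a constant test curve, and then to use the hypothesis that $\bb\psi^\tau$ carries no explicit $\tau$-dependence to upgrade a vanishing bound into an exact equality. (One could also simply quote the preceding lemma; I indicate that route as well.)

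First I would fix $\bb y\in\mathbb{R}^n$ and take the constant curve $\bb x(t)\equiv\bb y$. This curve lies in $C^p$ of every interval, remains in any open neighborhood of $\bb y$, and satisfies $\bb x(t_k)=\bb x(t_{k-1})=\cdots=\bb x(t_{k-\mu+1})=\bb y$, while its $C^p$-norm on $I_k$ is just $\norm{\bb y}$. Substituting this curve into the definition of consistency of $\bb\psi^\tau$ to $\bb\psi$ therefore gives
\[
\norm{\bb\psi(\bb y)-\bb\psi^\tau(\bb y,\dots,\bb y)}\leq C_\psi\norm{\bb y}\,\tau^p
\]
for every admissible time step $\tau>0$.

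Second I would invoke that $\bb\psi^\tau$ does not depend on $\tau$ explicitly: the vector $\bb\psi^\tau(\bb y,\dots,\bb y)$ on the left is then the same for all $\tau$, so the whole left-hand side is a fixed nonnegative number bounded above by $C_\psi\norm{\bb y}\,\tau^p$ for all small $\tau>0$; letting $\tau\to0$ forces it to vanish, which is exactly $\bb\psi(\bb y)=\bb\psi^\tau(\bb y,\dots,\bb y)$. Equivalently, since a single continuous function is trivially an equicontinuous family on $\mathbb{R}^{n\mu}$, one may apply the preceding lemma with $U=\mathbb{R}^n$ to get $\bb\psi(\bb y)=\lim_{\tau\to0}\bb\psi^\tau(\bb y,\dots,\bb y)$, and then note that this limit is over the constant sequence $\tau\mapsto\bb\psi^\tau(\bb y,\dots,\bb y)$, hence equals that constant.

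There is no genuine obstacle here; the statement is essentially immediate. The only two points that merit a line of care are (i) checking that the constant curve is an admissible argument in the consistency definition, so that the $O(\tau^p)$ estimate applies verbatim, and (ii) making explicit that it is precisely the absence of explicit $\tau$-dependence that collapses the vanishing consistency bound into an exact identity — without that hypothesis one recovers only the limit relation of the preceding lemma, not the averaging identity itself.
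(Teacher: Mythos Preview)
Your proof is correct and in fact cleaner than the paper's. The paper argues as in the preceding lemma: it fixes $\bb y$, picks a general $C^p$ curve $\bb x(t)$ with $\bb x(t_k)=\bb y$, then splits $\norm{\bb\psi(\bb y)-\bb\psi^\tau(\bb y,\dots,\bb y)}$ through the intermediate point $\bb\psi^\tau(\bb y,\bb x(t_{k-1}),\dots,\bb x(t_{k-\mu+1}))$, bounding the first piece by consistency and the second by continuity of $\bb\psi^\tau$ together with $\bb x(t_{k-i})\to\bb y$ as $\tau\to 0$; finally it uses $\tau$-independence of the left side to pass to the limit. Your choice of the \emph{constant} test curve $\bb x(t)\equiv\bb y$ collapses the intermediate point onto $(\bb y,\dots,\bb y)$ itself, so the triangle-inequality split and the continuity argument become unnecessary and the consistency bound applies directly. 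What the paper's route buys is structural parallelism with the proof of the equicontinuous lemma; what your route buys is brevity and the observation that continuity of $\bb\psi^\tau$ is not even needed here. Both hinge on the same final step: the $\tau$-independence of $\bb\psi^\tau(\bb y,\dots,\bb y)$ turns an $O(\tau^p)$ bound into an exact identity.
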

\begin{proof}
Let $\bb y\in \mathbb{R}^n$ and $t_k$ be fixed and let $\bb x(t) \in C^{(p)}([t_{k-\mu+1},t_{k}])$ with $\bb x(t_k)=\bb y$. By continuity of $\bb x$ and for fixed $t_k$, $\lim_{\tau\rightarrow 0} \bb x(t_{k-i})=\bb y$ for all $i = 0 ,\dots \mu-1$. Since $\bb \psi^\tau$ does not depend on $\tau$ explicitly, then by consistency and continuity of $\bb \psi^\tau$,
\[
&\norm{\bb \psi(\bb y)-\bb \psi^\tau (\bb y, \bb y,\dots, \bb y)} = \lim_{\tau\rightarrow 0} \norm{\bb \psi(\bb y)-\bb \psi^\tau (\bb y, \bb y,\dots, \bb y)}\\
&\hskip 4mm\leq  \lim_{\tau\rightarrow 0}\norm{\bb \psi(\bb y) - \bb \psi^\tau (\bb y, \bb x(t_{k-1}),\dots, \bb x(t_{k-\mu+1}))}\\
&\hskip 8mm +  \lim_{\tau\rightarrow 0}\norm{\bb \psi^\tau (\bb y, \bb x(t_{k-1}),\dots, \bb x(t_{k-\mu+1})) - \bb\psi^\tau(\bb y,\bb y,\dots,\bb y)}\\
&\hskip 4mm\leq  \underbrace{\lim_{\tau\rightarrow 0} C_{\psi}(\norm{\bb x}_{C^{p}(I_k)}) \tau^p}_{=0}\\
&\hskip 8mm + \underbrace{\norm{\bb \psi^\tau (\bb y, \lim_{\tau\rightarrow 0} \bb x(t_{k-1}),\dots, \lim_{\tau\rightarrow 0} \bb x(t_{k-\mu+1})) - \bb\psi^\tau(\bb y,\bb y,\dots,\bb y)}}_{=0}
\]
\end{proof}
To the best knowledge of the authors, we have not seen this remarkably simple identity relating $\bb \psi$ and $\bb \psi^\tau$ appeared in the previous literature. The term \emph{averaging identity} originates from applications where such $\bb \psi^\tau$ can typically be interpreted as a kind of (nonlinear) average of $\bb \psi$ among different time steps $t_k$.

\subsection{Conservative discretization}

\begin{definition}
The discretization \eqref{eq:disc} is called conservative if
\begin{equation*}
\bb \psi^\tau\discdep{\bb x^\tau}{k+1} = \bb \psi^\tau\discdep{\bb x^\tau}{k}, \text{ for } k\in\{\mu-1,\mu,\dots\}.
\end{equation*}
\end{definition}For a conservative discretization, it follows by induction that for $k\in\{\mu-1,\mu,\dots\}$,
\begin{equation}\bb \psi^\tau\discdep{\bb x^\tau}{k} =\bb \psi^\tau(\bb x_{k},\dots,\bb x_{k-\mu}) = \bb \psi^\tau(\bb x_{\mu-1},\dots,\bb x_{0}) =:\bb c^\tau.\label{discCons}
\end{equation}

In the case of 1-step methods with $\bb \psi^\tau$ not explicitly depending on $\tau$, then the averaging identity of Lemma \ref{limitIdentity} implies $\bb \psi^\tau(\bb y) = \bb \psi(\bb y)$ and so $\bb c^\tau = \bb c$. For general $\mu$-step methods, $\mu-1$ initial values must be specified in order to proceed. This initialization step is usually handled by using one-step methods of sufficient order, such as Runge-Kutta methods. However, since traditional 1-step methods are generally not conservative, there will be a corresponding error in the constant $\bb c^\tau=\bb \psi^\tau(\bb x_{\mu-1},\dots,\bb x_{0})$. Fortunately, as we show in Section \ref{sec:stability}, this initialization error does not pose a problem for the long-term stability result, as long as we can choose the error in $\norm{\bb c-\bb c^\tau}$ to be arbitrarily small. In particular, we need the following result in subsequent section.

\begin{lemma}
Let $\bb \psi^\tau:U\times\cdots\times U\subset\mathbb{R}^{n\mu} \rightarrow \mathbb{R}^m$ be consistent to order $p$ with $\bb \psi$. Suppose the $\mu$ initial values are $p$-th order accurate; that is for the unique solution $\bb x\in C^1(I;U)\cap C^p([0,t_{\mu-1}];U)$ to the ODE \eqref{eq:ODE}, the $\mu$ initial values $\{\bb x_k\}_{k=0}^{\mu-1}$ satisfies for some positive constants $C, \tau_{\mu}$ independent of $\tau$ such that if $0<\tau<\tau_{\mu}$,
\begin{equation}
\max_{0\leq k\leq \mu-1} \norm{\bb x(t_k)-\bb x_k} \leq C\tau^p. \label{initHypo}
\end{equation}
Also assume for some $\tau_0>0$, the family of functions $\{\bb \psi^\tau\}_{0<\tau<\tau_0}$ is equicontinuous on $U\times\cdots\times U \subset\mathbb{R}^{n \mu}$. 
Then for $\bb c=\bb \psi(\bb x_0)$,  \label{initLemma}
\begin{equation*}\lim_{\tau\rightarrow 0}\norm{\bb c-\bb c^\tau}=0.\end{equation*}
\end{lemma}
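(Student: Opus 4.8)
The idea is to compare $\bb c^\tau=\bb\psi^\tau(\bb x_{\mu-1},\dots,\bb x_0)$ with the ``diagonal'' value $\bb\psi^\tau(\bb x_0,\dots,\bb x_0)$, for which the already-established limit identity applies, and then absorb the remaining discrepancy using equicontinuity. Concretely, I would start from \eqref{discCons} and the triangle inequality,
\begin{align*}
\norm{\bb c-\bb c^\tau} &\leq \norm{\bb\psi(\bb x_0)-\bb\psi^\tau(\bb x_0,\dots,\bb x_0)}\\
&\quad + \norm{\bb\psi^\tau(\bb x_0,\dots,\bb x_0)-\bb\psi^\tau(\bb x_{\mu-1},\dots,\bb x_0)}.
\end{align*}
The first term tends to $0$ as $\tau\to 0$ directly by Lemma \ref{limitIdentity}, whose hypotheses (consistency of order $p$ and equicontinuity of $\{\bb\psi^\tau\}_{0<\tau<\tau_0}$ on $U\times\cdots\times U$) are precisely those assumed here.

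For the second term I would argue as follows. By \eqref{initHypo}, for each $k\in\{0,\dots,\mu-1\}$,
\begin{equation*}
\norm{\bb x_0-\bb x_k}\leq \norm{\bb x_0-\bb x(t_k)}+\norm{\bb x(t_k)-\bb x_k}\leq \norm{\bb x_0-\bb x(t_k)}+C\tau^p,
\end{equation*}
and the right-hand side tends to $0$ as $\tau\to 0$ (the first summand because $t_k=k\tau\to 0$ and $\bb x$ is continuous at $0$). Since $U$ is open and $\bb x_0\in U$, there is therefore a threshold below which $\bb x_k\in U$ for all $k$, so both tuples $(\bb x_0,\dots,\bb x_0)$ and $(\bb x_{\mu-1},\dots,\bb x_0)$ lie in $U\times\cdots\times U$ and equicontinuity is applicable. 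Fixing $\epsilon>0$, equicontinuity (together with equivalence of norms on $\mathbb{R}^{n\mu}$) furnishes a $\delta>0$, independent of $\tau$, such that any two tuples in $U\times\cdots\times U$ at max-norm distance less than $\delta$ have $\bb\psi^\tau$-images within $\epsilon/2$ for all $0<\tau<\tau_0$; the displayed estimate then makes the second term less than $\epsilon/2$ once $\tau$ is small. Combining the two bounds gives $\norm{\bb c-\bb c^\tau}<\epsilon$ for all sufficiently small $\tau$, which is the assertion of the lemma.

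The argument is essentially a re-assembly of Lemma \ref{limitIdentity} plus a convergence-of-initial-data step, so there is no genuine obstacle; the points that need care are purely bookkeeping — verifying that all the approximate initial values $\bb x_k$ eventually enter the domain $U$ on which equicontinuity is posited, and that the modulus $\delta$ can be taken uniform in $\tau\in(0,\tau_0)$ (which is exactly the content of the equicontinuity hypothesis, and is where the assumption is really used). An alternative, equally short route bypasses Lemma \ref{limitIdentity}: use the conservation law \eqref{contCons} to replace $\bb c$ by $\bb\psi(\bb x(t_{\mu-1}))$, bound $\norm{\bb\psi(\bb x(t_{\mu-1}))-\bb\psi^\tau(\bb x(t_{\mu-1}),\dots,\bb x(t_0))}$ by $C_\psi\norm{\bb x}_{C^p([0,t_{\mu-1}])}\tau^p$ via consistency (the $C^p$-norm staying bounded since $[0,t_{\mu-1}]$ shrinks to $\{0\}$ as $\tau\to 0$), and treat the remaining term by equicontinuity and \eqref{initHypo} exactly as above.
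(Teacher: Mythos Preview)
Your proof is correct. Your primary decomposition differs slightly from the paper's: you pass through the diagonal value $\bb\psi^\tau(\bb x_0,\dots,\bb x_0)$ and invoke Lemma~\ref{limitIdentity} for the first term, then use equicontinuity together with $\norm{\bb x_0-\bb x_k}\to 0$ (which in turn needs both \eqref{initHypo} and continuity of $\bb x$ at $0$) for the second. The paper instead inserts the tuple of exact values $(\bb x(t_{\mu-1}),\dots,\bb x(t_0))$, bounding the first term directly by consistency (which gives the explicit rate $C_\psi\norm{\bb x}_{C^p}\tau^p$) and the second by equicontinuity together with \eqref{initHypo} alone. Your ``alternative route'' at the end is precisely the paper's argument. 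The trade-off is minor: your primary route reuses Lemma~\ref{limitIdentity} and so is slightly more modular, while the paper's route avoids that appeal and makes the $O(\tau^p)$ dependence of the consistency term visible.
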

\begin{proof} The proof is similar to Lemma \ref{limitIdentity}. Let $\epsilon>0$ and $\bb x$ be the exact solution to the ODE \eqref{eq:ODE} and $\{\bb x_k\}_{k=0}^{\mu-1}$ be the given initial values. From \eqref{initHypo} and that $\bb x(t_k)\in U$ for all $0\leq k\leq \mu-1$, it follows that for some positive constant $\tau_1$, $\bb x_k \in U$ for all $0\leq k\leq \mu-1$ and $0<\tau<\tau_1$. By equicontinuity, there exists a $\delta>0$ depending only on $\epsilon$ such that if $\max_{0\leq k\leq \mu-1}\norm{\bb x(t_k)-\bb x_k}<\delta$, then
\begin{equation*}
\norm{\bb \psi^\tau(\bb x(t_{\mu-1}),\dots,\bb x(t_{0}))-\bb \psi^\tau(\bb x_{\mu-1},\dots,\bb x_{0})} < \frac{\epsilon}{2},
\end{equation*} for all $0<\tau<\min\{\tau_0,\tau_1\}$.
Indeed, $\max_{0\leq k\leq \mu-1}\norm{\bb x(t_k)-\bb x_k}<\delta$ is fulfilled by hypothesis \eqref{initHypo} if $0<\tau<\tau_2$ for some positive constant $\tau_2$. Since $\bb c = \bb\psi(\bb x(t_{\mu-1}))$ by \eqref{contCons} and $\bb c^\tau =\bb \psi^\tau(\bb x_{\mu-1},\dots,\bb x_{0})$ by \eqref{discCons}, it follows from consistency that for sufficiently small $\tau$, 
\begin{align*}
\norm{\bb c-\bb c^\tau} 
& \leq \norm{\bb\psi(\bb x(t_{\mu-1})) - \bb \psi^\tau(\bb x(t_{\mu-1}),\dots,\bb x(t_{0}))}\\
&\hskip 4mm+\norm{\bb \psi^\tau(\bb x(t_{\mu-1}),\dots,\bb x(t_{0}))-\bb \psi^\tau(\bb x_{\mu-1},\dots,\bb x_{0})} \\
&\leq C_\psi(\norm{\bb x}_{C^{p}(I_k)})\tau^p + \frac{\epsilon}{2} < \epsilon.
\end{align*}
\end{proof}

\section{Main results}
\label{sec:stability}

We now discuss a long-term stability result for conservative methods. Although in application, we have in mind the underlying space is $X=\mathbb{R}^n$, which is sufficient for ODEs in finite dimensions. In anticipation for subsequent work on evolution PDEs, $X$ can be some function space where the PDEs are viewed as ODEs over infinite dimensional spaces. Since the main ideas are mostly based on topological properties, we will state the main theorem in a general setting and restricting $X$ to a metric space when necessary. First, we review some elementary results from topology relevant to our discussion. See \cite{Mun00} for more details.


\begin{theorem} 
Let $A\subset X$ be a nonempty subset of a locally connected, second-countable topological space $X$. Then $A=\bigcup_{j\in J} A_j$ for some countable indexed set $J$, where the collection of $A_j$ are connected components of $A$ with each $A_j$ being nonempty, closed in $A$ and disjoint from each other.\label{countableComp}
\end{theorem}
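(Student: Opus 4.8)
The plan is to establish each asserted property of the connected components $A_j$ of $A$ separately, drawing on the two standing hypotheses on $X$: local connectedness and second-countability. Recall that the connected components of $A$ are the equivalence classes under the relation ``$x \sim y$ if $x$ and $y$ lie in a common connected subset of $A$''; this immediately gives that the $A_j$ are nonempty, pairwise disjoint, and cover $A$, with no hypotheses on $X$ needed. So the substantive claims are (i) each $A_j$ is closed in $A$, and (ii) the index set $J$ is countable.

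For (i), I would first note the general topological fact that the closure (in $A$) of a connected subset is connected; hence $\overline{A_j}\cap A$ is a connected subset of $A$ containing $A_j$, and by maximality of components it must equal $A_j$, so $A_j$ is closed in $A$. This step uses no special hypothesis on $X$ either. The role of local connectedness is really to upgrade ``closed'' toward ``open'': in a locally connected space the components of any \emph{open} set are open, but since $A$ need not be open here, I would only claim closedness, which is all the statement asks for. (If one wanted openness of $A_j$ in $A$ one would need $A$ locally connected in the subspace topology, which follows from local connectedness of $X$ when $A$ is open; the statement wisely does not assert this.)

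For (ii), the countability of $J$, I would use second-countability: fix a countable basis $\mathcal{B}$ for $X$. The key observation is that each component $A_j$, being nonempty, contains some point $x_j$, and hence — here is where I would invoke local connectedness to be safe, or alternatively just pick any basic open set meeting $A$ near $x_j$ — there is a basic open set $B_j \in \mathcal{B}$ with $x_j \in B_j$. More carefully, to get an injection $J \hookrightarrow \mathcal{B}$ I want distinct components to receive distinct basic sets; the cleanest route is: for each $j$ choose $x_j \in A_j$ and then $B_j\in\mathcal B$ with $x_j\in B_j$. This does not yet give injectivity. The standard fix is to observe that $B_j \cap A$ lies in a single component — this is exactly where local connectedness enters, via the fact that basic sets can be taken connected, so $B_j\cap A$, if connected, sits inside one component; but $B_j\cap A$ need not be connected. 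The robust argument instead is: since $\mathbb{Q}$-style counting fails in general, use that components are closed hence the complement $A\setminus A_j$ is open in $A$ only when there are finitely many — so I will instead argue directly that each component meets $\mathcal B$ in a way that distinguishes it, by assigning to $A_j$ the set $S_j=\{B\in\mathcal B: B\cap A\subseteq A_j,\ B\cap A\neq\emptyset\}$ and showing $S_j\neq\emptyset$ using local connectedness (a connected open neighborhood of $x_j$ inside any given basic neighborhood lies in $A_j$, and is covered by basic sets, at least one of which meets $A$ inside it), while the $S_j$ are pairwise disjoint since the $A_j$ are; then $j\mapsto$ (any element of $S_j$) injects $J$ into $\mathcal B$, proving $J$ countable.

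I expect the main obstacle to be precisely the last point: producing the injection $J\hookrightarrow\mathcal B$ cleanly. The naive ``pick a basic set in each component'' fails because a basic set can straddle several components, and a component may fail to be open in $A$ (e.g. $A$ a convergent sequence with its limit). Local connectedness of $X$ rescues this — one can shrink to a \emph{connected} open neighborhood $V$ of $x_j$ in $X$, whence $V\cap A$ is contained in $A_j$ (a connected-in-$X$ neighborhood intersected with $A$ need not be connected, so one must instead take $V$ connected and note $\{x_j\}\cup(V\cap A_j)$... ) — so the careful bookkeeping of which connectedness is used where is the delicate part, and I would write it out explicitly: take $V\subseteq B$ connected open with $x_j\in V$, pick $B'\in\mathcal B$ with $x_j\in B'\subseteq V$, and assign $B'$ to $j$; if $B'$ were also assigned to $j'\neq j$ we would have points of both $A_j$ and $A_{j'}$ in $B'\subseteq V$, but then... this still does not immediately force $j=j'$. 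The genuinely correct and cleanest statement is that in a locally connected space the components of \emph{any} subset that is a neighborhood of each of its points are open; absent that, one proves countability of $J$ by noting each $A_j$ has nonempty interior in $A$ relative to a connected refinement — and I would simply cite \cite{Mun00} for the fact that a locally connected second-countable space has at most countably many components, applied to $A$ with its subspace topology after verifying $A$ inherits enough structure, or give the direct $\mathcal B$-counting argument above with the connectedness bookkeeping done carefully.
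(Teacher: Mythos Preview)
The paper does not prove this theorem; it is stated as a review result with a pointer to Munkres, so there is no argument in the paper to compare against.

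That said, the difficulty you keep running into with the countability step is not a bookkeeping issue --- the theorem as stated is false. Take $X=\mathbb{R}$, which is locally connected and second-countable, and let $A$ be the Cantor set (or the set of irrationals). Then $A$ is totally disconnected, so its connected components are the singletons $\{a\}$ for $a\in A$, and there are uncountably many of them. Your attempts to inject $J$ into a countable basis $\mathcal B$ fail because no such injection exists in general. The step you flag as delicate --- ``after verifying $A$ inherits enough structure'' --- is precisely where the argument breaks: an arbitrary subset of a locally connected space need not be locally connected in the subspace topology, and without that you cannot make the components of $A$ open in $A$, which is what the counting argument requires.

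A correct version needs an extra hypothesis, for instance that $A$ be open in $X$; then $A$ is itself locally connected and second-countable, its components are open, and any pairwise disjoint family of nonempty open sets in a second-countable space is countable. In the paper's applications $A$ is a closed preimage $\bb\psi^{-1}(\{\bb c\})$ or $\bb\psi^{-1}(\overline{B_\epsilon(\bb c)})$, so this fix does not apply directly; fortunately the downstream arguments --- the locally finite neighborhood hypothesis and the separation theorem --- never actually use the countability of $J$, so the defect is cosmetic. Your treatment of the component basics and of closedness in $A$ is correct and needs no change.
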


Theorem \ref{countableComp} immediately implies the following:
\begin{lemma} \label{countableXj}
Let $X$ and $Y$ be topological spaces with $X$ locally connected and second-countable and $Y$ Hausdorff. Suppose $\bb \psi:X\rightarrow Y$ is a continuous function. For any $\bb c\in Y$ with a nonempty preimage $\bb \psi^{-1}(\{\bb c\})$, there is some countable indexed set $J$ such that,
\begin{equation*}
\bb \psi^{-1}(\{\bb c\}) = \bigcup_{j\in J} X_{j},
\end{equation*} where each $X_{j}$ is a nonempty, closed subset in $\bb \psi^{-1}(\{\bb c\})$ and disjoint from each other. 
\end{lemma}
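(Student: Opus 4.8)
The plan is to obtain this lemma as an immediate application of Theorem~\ref{countableComp} to the set $A := \bb \psi^{-1}(\{\bb c\})$. The only thing to check is that the hypotheses line up: $A$ is a nonempty subset of $X$ (nonemptiness is assumed), and $X$ is locally connected and second-countable by hypothesis. Hence Theorem~\ref{countableComp} applies verbatim and yields a countable index set $J$ together with a decomposition $A = \bigcup_{j\in J} X_j$ into connected components $X_j$, each nonempty, closed in $A$, and pairwise disjoint. That is precisely the asserted conclusion.

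First I would record the (strictly speaking optional, but clarifying) observation that since $Y$ is Hausdorff the singleton $\{\bb c\}$ is closed in $Y$, so by continuity of $\bb \psi$ the preimage $A = \bb \psi^{-1}(\{\bb c\})$ is closed in $X$; in particular each $X_j$, being closed in the subspace $A$ which is itself closed in $X$, is closed in $X$ as well. Then I would simply invoke Theorem~\ref{countableComp} with this $A$ and restate its conclusion in the notation $X_j$, noting that ``connected component of $A$'' is exactly ``connected component of $\bb \psi^{-1}(\{\bb c\})$''.

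There is essentially no obstacle here: the content has already been packaged into Theorem~\ref{countableComp}, and the lemma is a direct translation of that statement through the continuous map $\bb \psi$. The only point requiring a moment's care is making explicit that local connectedness and second-countability are properties of $X$ alone (they are not inherited from or imposed on $A$ beyond what Theorem~\ref{countableComp} already handles), so that the hypotheses of Theorem~\ref{countableComp} are met without any further argument. Accordingly the proof is short, consisting of the Hausdorff remark followed by the citation of Theorem~\ref{countableComp}.
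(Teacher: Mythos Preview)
Your proposal is correct and matches the paper's approach exactly: the paper simply states that Theorem~\ref{countableComp} implies this lemma immediately, which is precisely your argument of applying that theorem to $A=\bb\psi^{-1}(\{\bb c\})$. Your additional remark that the Hausdorff hypothesis makes $A$ closed in $X$ (and hence each $X_j$ closed in $X$) is not needed here---the lemma only claims closedness in $\bb\psi^{-1}(\{\bb c\})$---but it correctly anticipates Lemma~\ref{closedLem}.
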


Similarly, we will be interested in working with preimage of neighborhoods in metric spaces. Specifically, for a metric space $Y$ with a metric $d_Y(\cdot,\cdot)$, we denote 
the open neighborhood $B_{\epsilon}(\bb c) = \{ \bb y \in Y : d_Y(\bb y, \bb c) <\epsilon \}$. 

\begin{lemma} \label{countableXje}
Let $X$ be a locally connected, second-countable topological space and $Y$ be a metric space. Suppose $\bb \psi:X\rightarrow Y$ is a continuous function. For any $\bb c\in Y$ and any $\epsilon>0$ with a nonempty preimage $\bb \psi^{-1}(\overline{B_{\epsilon}(\bb c)})$, there is some countable indexed set $J^\epsilon$ such that,
\begin{equation*}
\bb \psi^{-1}(\overline{B_{\epsilon}(\bb c)}) = \bigcup_{j\in J^\epsilon} X_{j}^\epsilon,
\end{equation*} where each $X_{j}^\epsilon$ is a nonempty, closed subset in $\bb \psi^{-1}(\overline{B_{\epsilon}(\bb c)})$ and disjoint from each other. 
\end{lemma}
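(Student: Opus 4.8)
The plan is to reduce Lemma~\ref{countableXje} to Lemma~\ref{countableXj} (equivalently to Theorem~\ref{countableComp}) by observing that the only structural hypotheses needed are that the domain $X$ is locally connected and second-countable, and that the target-side set whose preimage we take is closed. Concretely, first I would note that $\overline{B_\epsilon(\bb c)}$ is a closed subset of the metric space $Y$; being a subspace of a metric space, it is itself metrizable and in particular Hausdorff. Then, since $\bb\psi:X\to Y$ is continuous and $\overline{B_\epsilon(\bb c)}$ is closed in $Y$, the preimage $A^\epsilon := \bb\psi^{-1}(\overline{B_\epsilon(\bb c)})$ is a closed subset of $X$. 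By hypothesis it is nonempty.

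Next I would apply Theorem~\ref{countableComp} directly to the subset $A := A^\epsilon \subset X$: since $X$ is locally connected and second-countable, Theorem~\ref{countableComp} gives a decomposition $A^\epsilon = \bigcup_{j\in J^\epsilon} X_j^\epsilon$ into connected components, indexed by a countable set $J^\epsilon$, with each $X_j^\epsilon$ nonempty, closed in $A^\epsilon$, and pairwise disjoint. That is exactly the claimed conclusion, so essentially no further work is required — one could even phrase the proof as ``apply Lemma~\ref{countableXj} with $Y$ replaced by the metric (hence Hausdorff) space $\overline{B_\epsilon(\bb c)}$ and $\bb c$ replaced by any point of that set whose fiber is nonempty,'' but the cleanest route is the direct appeal to Theorem~\ref{countableComp}, since we already have a concrete closed subset of $X$ in hand.

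There is essentially no hard part here; the statement is a routine corollary. The only place warranting a word of care is making explicit that local connectedness and second-countability are properties of $X$ alone and therefore survive unchanged when we pass to the closed subspace $A^\epsilon$ in the sense required by Theorem~\ref{countableComp} — but Theorem~\ref{countableComp} is already stated for an arbitrary nonempty subset $A$ of such an $X$, so even this is handled for us. I would also remark, for the reader's benefit, that the preimage of the \emph{closed} ball (rather than the open ball) is used precisely so that $A^\epsilon$ is closed in $X$, which is what guarantees each component $X_j^\epsilon$ is closed in $X$ and not merely in $A^\epsilon$; this closedness is what will later be exploited in the separation argument underlying the main stability theorem.

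\begin{proof}
Since $Y$ is a metric space, the closed ball $\overline{B_\epsilon(\bb c)}$ is a closed subset of $Y$, and as a subspace of a metric space it is itself metrizable, hence Hausdorff. Because $\bb\psi$ is continuous and $\overline{B_\epsilon(\bb c)}$ is closed in $Y$, the preimage $A^\epsilon:=\bb\psi^{-1}(\overline{B_\epsilon(\bb c)})$ is a closed subset of $X$, and it is nonempty by hypothesis. Applying Theorem~\ref{countableComp} to the nonempty subset $A^\epsilon$ of the locally connected, second-countable space $X$, we obtain $A^\epsilon=\bigcup_{j\in J^\epsilon} X_j^\epsilon$ for a countable index set $J^\epsilon$, where the $X_j^\epsilon$ are the connected components of $A^\epsilon$, each nonempty, closed in $A^\epsilon$, and pairwise disjoint. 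Since $A^\epsilon$ is closed in $X$, each $X_j^\epsilon$ is in fact closed in $X$ as well. This is the asserted decomposition.
\end{proof}
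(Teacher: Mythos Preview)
Your proof is correct and follows the same route the paper intends: the paper states this lemma without proof as an immediate consequence of Theorem~\ref{countableComp}, and that is exactly what you do. Your additional remark that each $X_j^\epsilon$ is closed in $X$ (not merely in $A^\epsilon$) is correct but goes slightly beyond the lemma as stated---the paper records that fact separately as Lemma~\ref{closedLem}.
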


\begin{lemma}
Let $X$ be a topological space. If $A\subset X$ is closed in $X$ and $B\subset A$ is closed in $A$, then $B$ is closed in $X$. \label{closedSubspace}
\end{lemma}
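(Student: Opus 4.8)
The plan is to use the characterization of closed sets in the subspace topology. Recall that $B$ being closed in the subspace $A$ means precisely that there exists a set $C$ which is closed in $X$ with $B = A \cap C$. First I would invoke this characterization to produce such a $C$.

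The key step is then simply to observe that $B = A \cap C$ is the intersection of two subsets, each closed in $X$ (namely $A$, which is closed in $X$ by hypothesis, and $C$), and that finite intersections of closed sets are closed. Hence $B$ is closed in $X$, which is the desired conclusion.

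An alternative route, which avoids naming $C$, is to argue with closures: let $\overline{B}$ denote the closure of $B$ in $X$. Since $B$ is closed in $A$, its closure in $A$ equals $B$, and the closure of $B$ in $A$ is $\overline{B}\cap A$; thus $\overline{B}\cap A = B$. On the other hand, $B\subset A$ together with $A$ closed in $X$ gives $\overline{B}\subset\overline{A}=A$, so $\overline{B} = \overline{B}\cap A = B$, i.e.\ $B$ is closed in $X$. Either argument is a one-line verification once the relevant definition is unpacked.

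I expect no real obstacle here; the only thing to be careful about is to state clearly which topology each ``closed'' refers to (subspace topology on $A$ versus the topology on $X$), since the entire content of the lemma is the passage between these two notions. I would present the first argument as the main proof for brevity.
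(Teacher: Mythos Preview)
Your proposal is correct; both arguments you outline are standard and valid. Note, however, that the paper does not actually supply a proof for this lemma: it is stated as an elementary topology fact (with reference to Munkres) and used as a black box, so there is no ``paper's own proof'' to compare against.
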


\begin{lemma}
If the hypotheses of Lemma \ref{countableXj} are satisfied, then each $X_{j}$ is a closed subset of $X$. Moreover, if $Y$ is a metric space, then each $X_j^\epsilon$ is closed in $X$ for any $\epsilon>0$. 
\label{closedLem}
\end{lemma}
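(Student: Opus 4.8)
The plan is to reduce everything to three ingredients: (i) the preimage of a closed set under a continuous map is closed; (ii) the pieces $X_j$ (resp.\ $X_j^\epsilon$) are already known to be closed \emph{within} the relevant preimage by Lemma \ref{countableXj} (resp.\ Lemma \ref{countableXje}); and (iii) the transitivity of closedness recorded in Lemma \ref{closedSubspace}, namely that closed-in-a-closed-subspace implies closed in the whole space. So the work is just to identify the correct ambient closed set in each case.

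For the first assertion, I would first observe that since $Y$ is Hausdorff the singleton $\{\bb c\}$ is closed in $Y$, hence $\bb \psi^{-1}(\{\bb c\})$ is closed in $X$ by continuity of $\bb \psi$. Lemma \ref{countableXj} already gives that each connected component $X_j$ is closed in $\bb \psi^{-1}(\{\bb c\})$. Applying Lemma \ref{closedSubspace} with $A = \bb \psi^{-1}(\{\bb c\})$ and $B = X_j$ then yields that $X_j$ is closed in $X$.

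For the second assertion, $Y$ is now a metric space, so $\overline{B_\epsilon(\bb c)}$ is closed in $Y$ simply as a closure; thus $\bb \psi^{-1}(\overline{B_\epsilon(\bb c)})$ is closed in $X$ by continuity of $\bb \psi$. Since a metric space is Hausdorff, the hypotheses of Lemma \ref{countableXje} are in force, so each $X_j^\epsilon$ is closed in $\bb \psi^{-1}(\overline{B_\epsilon(\bb c)})$; one further application of Lemma \ref{closedSubspace} (with $A = \bb \psi^{-1}(\overline{B_\epsilon(\bb c)})$, $B = X_j^\epsilon$) finishes the proof.

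I do not expect any genuine obstacle here; the only points worth stating carefully are the two distinct reasons the ambient preimages are closed — Hausdorffness of $Y$ to make $\{\bb c\}$ closed in the first part, and the automatic closedness of the closure $\overline{B_\epsilon(\bb c)}$ in the second — after which Lemma \ref{closedSubspace} delivers the conclusion immediately.
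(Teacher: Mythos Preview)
Your proposal is correct and follows essentially the same argument as the paper: use Hausdorffness (resp.\ the closure in the metric case) to see the relevant preimage is closed, then combine the fact that each $X_j$ (resp.\ $X_j^\epsilon$) is closed in that preimage with Lemma~\ref{closedSubspace}. The paper's proof is just a terser version of what you wrote.
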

\begin{proof}
By continuity of $\bb \psi$ and $\{\bb c\}$ is closed in $Y$ (since $Y$ is Hausdorff), $\bb \psi^{-1}(\{\bb c\})$ is closed in $X$. Since $X_{j}$ is closed in $\bb \psi^{-1}(\{\bb c\})$, then Lemma \ref{closedSubspace} implies $X_{j}$ is closed in $X$. The proof proceeds similarly for $X_j^\epsilon$.
\end{proof}

\subsection{Locally finite neighborhood}
For the main stability result, we wish to include cases where the connected components can be bounded or unbounded. Moreover, we also wish to handle the possibility of countably infinitely many connected components. However, it turns out the main stability result hinges on whether certain connected components can be separated by open neighborhoods. In particular, there are situations which can arise we wish to exclude, as the following example illustrate.

\begin{example}Let $X=\mathbb{R}=Y$ and consider the smooth function: 
\begin{equation*}
\psi(x)=\left\{\begin{split}
\exp\left(-\frac{1}{x^2}\right)\sin\left(\frac{1}{x^2}\right),&& x\neq 0 \\
0,&& x=0
\end{split}\right.
\end{equation*}The preimage $\psi^{-1}(\{0\})$ has the connected components $\{0\}\cup \bigcup_{k\in\mathbb{N}}\left\{\pm\frac{1}{\sqrt{k\pi}}\right\}$. But for $\epsilon>0$, the neighborhood $(-\epsilon,\epsilon)$ around $X_0=\{0\}$ intersects infinite many connected components $\left\{\pm\frac{1}{\sqrt{k\pi}}\right\}$ for all $k>\frac{1}{\epsilon^2\pi}$. 
\end{example}
The preceding example shows that $X_0$ cannot be separated by open neighborhoods if it has ``too many" neighboring connected components. This leads to the following definition.

\begin{definition}
Let $\{ U_\beta\}_{\beta \in J}$ be a collection of closed subsets of a topological space $X$ with an index set $J$. For a fixed $\alpha\in J$, $U_\alpha$ is said to have a {\bf locally finite neighborhood} (LFN) $V$ if $V$ is an open subset of $X$ such that:
\begin{itemize}
\item $U_\alpha \subset V$
\item $U_\beta \cap V = \varnothing$ for all but finitely many $\beta \in J$ 
\end{itemize}
Furthermore, if $X$ is a metric space, we say that $V$ is a bounded LFN of $U_\alpha$ if $V$ is also bounded.
\end{definition}

For a normal topological space $X$, an equivalent definition of a LFN is that $U_{\alpha}$ and $\bigcup_{\beta \neq \alpha} U_{\beta}$ can be separated by open neighborhoods.

\begin{theorem}Let $X$ be a normal topological space and $\{ U_\beta \}_{\beta \in J}$ be a collection of closed subsets in $X$. Then $U_{\alpha}$ has a LFN $V$ if and only if there exists disjoint open subsets $A, B$ in $X$ such that $U_{\alpha}\subset A\subset V$ and $\bigcup_{\alpha\neq \beta \in J} U_{\beta} \subset B$. \label{LFN}
\end{theorem}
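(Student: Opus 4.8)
The plan is to prove both directions, with the substantive content in the forward implication. For the easy direction, suppose such disjoint open sets $A, B$ exist. Then $V$ is open, $U_\alpha \subset A \subset V$, and for each $\beta \neq \alpha$ we have $U_\beta \subset B$; since $A$ and $B$ are disjoint and $A \supset V$... wait, that is backwards. Let me reconsider: we have $U_\alpha \subset A \subset V$, so $V \supset A$ but $V$ need not be contained in $A$. To conclude $V$ is a LFN we need $U_\beta \cap V = \varnothing$ for all but finitely many $\beta$. This does \emph{not} follow from the stated conditions alone, because $V$ may be larger than $A$. So the theorem as stated must intend the LFN conclusion to use only that $U_\alpha \subset V$ together with the separation being witnessed \emph{inside} $V$; more carefully, I expect the intended reading is that one takes $V = A$ in the ``if'' direction (any open set between $U_\alpha$ and $A$ that is disjoint from $B$ works, and in particular $A$ itself), so that $U_\beta \cap V \subset U_\beta \cap A \subset B \cap A = \varnothing$ for \emph{every} $\beta \neq \alpha$ — hence trivially for all but finitely many. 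Thus the ``if'' direction is immediate once one observes $V$ can be taken to be $A$.

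For the forward direction — the main obstacle — suppose $U_\alpha$ has a LFN $V$. By definition there are only finitely many indices $\beta_1, \dots, \beta_N \neq \alpha$ with $U_{\beta_i} \cap V \neq \varnothing$. First I would handle the ``bad'' finitely many: set $F = U_{\beta_1} \cup \cdots \cup U_{\beta_N}$, a finite union of closed sets, hence closed in $X$. Now $U_\alpha$ and $F \setminus V$... I need to be careful about whether $U_{\beta_i}$ pokes partly inside $V$. The cleaner move: let $C = \bigcup_{\beta \neq \alpha} U_\beta$. I would like $C$ to be closed, but an arbitrary union of closed sets need not be closed. However, $\bigcup_{\beta \neq \alpha, \beta \notin \{\beta_1,\dots,\beta_N\}} U_\beta$ is disjoint from $V$, so its closure is disjoint from the open set... no, again closure of a union is not the union of closures. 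The right approach is to work with $V^c$, which is closed: the set $C' := \bigcup_{\beta \notin \{\alpha,\beta_1,\dots,\beta_N\}} U_\beta$ is contained in $V^c$, hence $\overline{C'} \subset V^c$, so $\overline{C'}$ is a closed set disjoint from the open set $V \supset U_\alpha$; in particular $\overline{C'}$ is disjoint from $U_\alpha$.

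It remains to also separate $U_\alpha$ from the finitely many $U_{\beta_i}$, and then combine. Each $U_{\beta_i}$ is closed and disjoint from $U_\alpha$ (the $U_\beta$ are pairwise disjoint), so $F = \bigcup_{i=1}^N U_{\beta_i}$ is closed and disjoint from $U_\alpha$. Therefore $D := F \cup \overline{C'}$ is a finite union of closed sets, hence closed, and $D \supset \bigcup_{\beta \neq \alpha} U_\beta$ while $D$ is disjoint from the closed set $U_\alpha$. Now I invoke normality of $X$: the disjoint closed sets $U_\alpha$ and $D$ can be separated by disjoint open sets $A \supset U_\alpha$ and $B \supset D$. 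Finally, to also get $A \subset V$, replace $A$ by $A \cap V$, which is still open, still contains $U_\alpha$ (since $U_\alpha \subset V$), still disjoint from $B$, and now contained in $V$. Then $\bigcup_{\alpha \neq \beta} U_\beta \subset D \subset B$, completing the proof. The one genuinely delicate point is the passage from the infinite ``good'' union $C'$ to a closed set via $\overline{C'} \subset V^c$; I would state that step explicitly since it is where the LFN hypothesis does its real work.
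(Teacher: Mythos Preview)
Your forward-direction argument is correct and takes a somewhat different route from the paper. Both proofs isolate the finitely many ``bad'' indices $\beta_1,\dots,\beta_N$ with $U_{\beta_i}\cap V\neq\varnothing$, but the paper then uses normality to separate $U_\alpha$ from the closed set $\bigcup_i(U_{\beta_i}\cap\overline{V})$, obtaining disjoint open $A',B'$, and sets $A=A'\cap V$ and $B=B'\cup(X\setminus\overline{V})$, so that the remaining pieces are absorbed into the open complement of $\overline{V}$. You instead note that the ``good'' infinite union $C'$ sits inside the closed set $V^c$, hence $\overline{C'}\subset V^c$; adjoining the finite closed set $F$ yields a single closed set $D\supset\bigcup_{\beta\neq\alpha}U_\beta$ disjoint from $U_\alpha$, and one application of normality (followed by intersecting with $V$) finishes. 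Your decomposition is arguably more direct and avoids having to track which parts of the various $U_\beta$ lie inside versus outside $\overline{V}$. One small caveat: you justify $F\cap U_\alpha=\varnothing$ by asserting that the $U_\beta$ are pairwise disjoint, which is not literally among the stated hypotheses---it does hold in the intended application to connected components and is equally implicit in the paper's own argument, so this is harmless in context but worth flagging. Your reading of the ``if'' direction (take $A$ itself as the LFN) matches the paper exactly.
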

\begin{proof}
It suffices to prove only the forward implication, since if there exists such disjoint open subsets $A, B$, then $U_{\alpha}$ has a LFN $A$. Suppose $U_{\alpha}$ has a LFN $V$ so that $U_{\alpha}\subset V$ and at most a finite collection $\{U_{\beta}\}_{\beta\in J'}$ with $\alpha \notin J'$ such that $U_{\beta}\cap V \neq \varnothing$ for all $\beta\in J'$. Since $J'$ is finite, $\bigcup_{\beta\in J'} (U_{\beta}\cap \overline{V})$ is closed in $X$. Since $U_{\alpha}$ is closed in a normal topological space $X$, there exists disjoint open subsets $A', B'$ in $X$ such that $U_{\alpha}\subset A'$ and $\bigcup_{\beta \in J'}(U_{\beta}\cap \overline{V}) \subset B'$. Let $A = A'\cap V$ and $B = B' \bigcup (X-\overline{V})$. Then clearly both $A, B$ are disjoint and open in $X$ with $U_{\alpha}\subset A\subset V$. Thus, the result follows since, 
\[
\bigcup_{\alpha \neq\beta\in J} U_{\beta} = \underbrace{\left(\bigcup_{\beta \in J'} (U_{\beta}\cap \overline{V})\right)}_{\subset B'} \bigcup \underbrace{\left(\bigcup_{\beta\in J'} (U_{\beta}\cap (X-\overline{V})\right)}_{\subset X-\overline{V}} \bigcup \underbrace{\left(\bigcup_{\substack{\beta\in J\\ \beta \notin J'}}^\infty U_{\beta}\right)}_{\subset X-\overline{V}} \subset B.
\]
\end{proof}

\begin{corollary}
Let $X$ be a metric space and $Y$ be a Hausdorff topological space. Suppose $\bb \psi:X\rightarrow Y$ is continuous function with a nonempty preimage $\bb \psi^{-1}(\{\bb c\}) = \bigcup_{j\in J} X_j$ for some countable index set $J$. Then $X_0$ has a LFN $V$ if and only if there exists disjoint open subsets $A, B$ in $X$ such that $X_0\subset A\subset V$ and $\bigcup_{0\neq j \in J}X_{j} \subset B$. Thus, if $X_0$ has a bounded LFN $V$, then $A$ is also bounded. \label{LFNms}
\end{corollary}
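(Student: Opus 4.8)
The plan is to obtain this corollary as a direct specialization of Theorem \ref{LFN} to the collection of connected components $\{X_j\}_{j\in J}$ of the preimage $\bb\psi^{-1}(\{\bb c\})$. The only preliminary work is to check that the hypotheses of Theorem \ref{LFN} are in force: that the ambient space is normal, and that each member of the collection is closed in it.

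First I would note that a metric space is normal, so $X$ is normal. Next, since $Y$ is Hausdorff the singleton $\{\bb c\}$ is closed in $Y$, and continuity of $\bb\psi$ then gives that $\bb\psi^{-1}(\{\bb c\})$ is closed in $X$. Each $X_j$, being a connected component of $\bb\psi^{-1}(\{\bb c\})$, is closed in $\bb\psi^{-1}(\{\bb c\})$; by Lemma \ref{closedSubspace} it is therefore closed in $X$. This is exactly the argument already recorded in the proof of Lemma \ref{closedLem}, which in fact uses only continuity of $\bb\psi$, the Hausdorff property of $Y$, and Lemma \ref{closedSubspace}, and not the local connectedness or second-countability of $X$ — so it applies verbatim under the weaker hypotheses of the present corollary.

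With these facts in hand, I would apply Theorem \ref{LFN} to the family $\{X_j\}_{j\in J}$ with distinguished index $\alpha=0$: $X_0$ has a LFN $V$ if and only if there exist disjoint open subsets $A,B$ of $X$ with $X_0\subset A\subset V$ and $\bigcup_{0\neq j\in J}X_j\subset B$, which is precisely the claimed equivalence. For the final assertion, if the LFN $V$ is moreover bounded, then the inclusion $A\subset V$ immediately forces $A$ to be bounded.

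I do not expect a genuine obstacle here: the substantive content lives entirely in Theorem \ref{LFN}, and the corollary is a packaging step translating "collection of closed subsets" into "connected components of a preimage." The one point warranting care is that the corollary's hypotheses are strictly weaker than those of Lemma \ref{countableXj}, so the closedness of the $X_j$ in $X$ should be justified directly through Lemma \ref{closedSubspace} rather than merely cited from Lemma \ref{closedLem}.
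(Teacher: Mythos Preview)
Your proposal is correct and follows the same strategy as the paper: verify that each $X_j$ is closed in $X$, note that a metric space is normal, and apply Theorem \ref{LFN}. Your justification of closedness directly via Lemma \ref{closedSubspace} is in fact cleaner than the paper's, which invokes Lemma \ref{closedLem} by asserting that ``any metric space is locally connected and second-countable'' --- a claim that is false in general, though your argument shows it is unnecessary here.
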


\begin{proof}
Since any metric space $X$ is locally connected and second-countable, $X_{j}$ is closed in $X$ by Lemma \ref{closedLem} for all $j\in J$. As $X$ is also normal, applying Theorem \ref{LFN} for the collection of closed subsets $\{X_{j}\}_{j\in J}$ implies the result.
\end{proof}

Furthermore, we will need the following two lemmas regarding compact subsets.

\begin{lemma}
Let $X$ be a topological space and let $\{A_n\}_{n\in \mathbb{N}}$ be a decreasing nested sequence of nonempty compact subsets in $X$. For any open subset $U$ in $X$ such that $\displaystyle \bigcap_{n\in\mathbb{N}} A_n \subset U$, there exists a positive integer $N$ such that $A_n\subset U$ for all $n\geq N$. \label{nestedCompactSubsets}
\end{lemma}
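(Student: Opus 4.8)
The plan is to run a finite-subcover argument on the compact set $A_1$. In the setting that is actually needed later (where $X$ is a metric space, or more generally Hausdorff), each $A_n$ is compact and therefore closed, so $X\setminus A_n$ is open. The key claim is that the family $\mathcal{U} = \{U\}\cup\{X\setminus A_n : n\in\mathbb{N}\}$ is an open cover of $A_1$: given $x\in A_1$, if $x\in U$ we are done, and if $x\notin U$ then, since $\bigcap_{n} A_n\subset U$, the point $x$ fails to lie in some $A_n$, i.e.\ $x\in X\setminus A_n$.

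Next I would invoke compactness of $A_1$ to extract a finite subcover $A_1\subset U\cup(X\setminus A_{n_1})\cup\cdots\cup(X\setminus A_{n_k})$. Putting $N=\max\{n_1,\dots,n_k\}$ and using that the sequence is nested ($A_N\subset A_{n_i}$, hence $X\setminus A_{n_i}\subset X\setminus A_N$), the finite subcover collapses to $A_1\subset U\cup(X\setminus A_N)$, i.e.\ $A_1\cap A_N\subset U$; since $A_N\subset A_1$ this gives $A_N\subset U$, and nestedness again yields $A_n\subset A_N\subset U$ for every $n\geq N$, which is the assertion.

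An equivalent packaging uses the finite intersection property: if no such $N$ existed, nestedness would force $A_n\not\subset U$ for all $n$, so that $C_n:=A_n\setminus U = A_n\cap(X\setminus U)$ is a decreasing sequence of nonempty closed subsets of the compact set $A_1$ with $\bigcap_n C_n=\big(\bigcap_n A_n\big)\setminus U=\varnothing$, contradicting compactness of $A_1$. The only genuine content in either version is the step that upgrades compactness of the $A_n$ to closedness, so that their complements are open (equivalently, so that the $C_n$ are closed in $A_1$); this is immediate in a metric space — the only case used in the sequel — and I would state the lemma with that understanding. Everything involving the nested structure is then bookkeeping: it is what lets me both reduce to the ``bad'' case and collapse the finite subcover down to the single index $N$.
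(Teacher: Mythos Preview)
Your argument is correct, and your second packaging (the finite intersection property version with $C_n=A_n\cap(X\setminus U)$) is essentially the paper's own approach: the paper argues by contradiction, passes to a subsequence $A_{n_i}$ with $A_{n_i}\cap(X\setminus U)\neq\varnothing$, and invokes Cantor's intersection theorem to produce a point of $\bigcap_n A_n$ lying in $X\setminus U$. Your open-cover formulation is simply the dual of this and collapses the subsequence step by using nestedness up front. You are also more explicit than the paper about the one genuine hypothesis underlying both arguments, namely that compact subsets are closed (Hausdorff\slash metric), which is indeed satisfied in every application of the lemma later in the paper.
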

\begin{proof}
If not, then there exists a sequence $n_i\rightarrow \infty$ such that $A_{n_i}\cap (X-U)\neq \varnothing$ for all $i\in \mathbb{N}$. Since each $B_i:=A_{n_i}\cap (X-U)$ is nonempty and compact, there exists an element $\bb x\in \bigcap_{i\in\mathbb{N}}B_i$ by Cantor's intersection theorem. Moreover, for each $n\in\mathbb{N}$, there exists an index $n_{i_j}\geq n$ so that $A_{n_{i_j}}\subset A_n$, since $A_n$ are decreasing and $n_i\rightarrow \infty$. This implies $\bigcap_{j\in\mathbb{N}} A_{n_{i_j}} \subset \bigcap_{n\in\mathbb{N}} A_n$. However, this leads to a contradiction since this would imply $\bb x \in B_i \subset X-U$ and $\bb x \in \bigcap_{i\in\mathbb{N}} B_i \subset \bigcap_{i\in\mathbb{N}} A_{n_i} \subset \bigcap_{j\in\mathbb{N}} A_{n_{i_j}}  \subset \bigcap_{n\in\mathbb{N}} A_{n} \subset U$.

\end{proof}

\begin{lemma}
Let $X$ be a metric space and $A, B$ be subsets of $X$ with $A$ compact in $X$ and $A\cap \overline{B} = \varnothing$. Then $d_X(A,B)>0$. \label{compactDist}
\end{lemma}
\begin{proof}
Suppose not, then there is a sequence $a_n\in A$ such that $d_X(a_n,B)\rightarrow 0$. Since $A$ is compact, there is a convergent subsequence $a_{n_i}\rightarrow a \in A$. 
Thus, $d_X(a,B)=\lim_{i\rightarrow \infty} d_X(a_{n_i},B)=0$ or in other words $a\in \overline{B}$ which contradicts that $A\cap \overline{B} = \varnothing$.
\end{proof}

Finally, we show a key separation theorem for establishing the main stability theorem for conservative methods. Note that for any $\epsilon>0$, since $X_0 \subset \bb \psi^{-1}(\overline{B_\epsilon(\bb c)})$, $X_0\subset X_j^\epsilon$ for some index $j$ in $J^\epsilon$. By rearranging $J^\epsilon$ if necessary, we can denote $X_{0}^\epsilon$ to be the unique connected component containing $X_0$ for all $\epsilon>0$.

\begin{theorem}
Let $X, Y$ be metric spaces and let $\bb \psi:X\rightarrow Y$ be a continuous function with a nonempty preimage $\bb \psi^{-1}(\{\bb c\}) = \bigcup_{j\in J} X_j$ for some countable index set $J$. For each $\epsilon>0$, denote the nonempty preimage $\bb \psi^{-1}(\overline{B_\epsilon(\bb c)}) = \bigcup_{j\in J^\epsilon} X_j^\epsilon$ for some countable index set $J^\epsilon$. Suppose $X_0$ has a bounded LFN $V$ with $\overline{V}$ compact, then there exists $\epsilon_0>0$ such that if $0<\epsilon<\epsilon_0$, $X_{0}^\epsilon$ is compact and is separated from $\bigcup_{0\neq j \in J^\epsilon} X_j^\epsilon$. \label{keyLemma}
\end{theorem}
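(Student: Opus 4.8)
The plan is to exploit the bounded LFN $V$ of $X_0$ together with the compactness of $\overline{V}$, and then let $\epsilon\to 0$. First I would use Corollary \ref{LFNms}: since $X_0$ has a bounded LFN $V$, there exist disjoint open sets $A,B$ with $X_0\subset A\subset V$ and $\bigcup_{0\neq j\in J} X_j\subset B$; in particular $A$ is bounded and $\overline{A}\subset\overline{V}$ is compact. The natural candidate for a ``witness'' neighborhood of $X_0$ is $A$ (or a slightly shrunk version of it). The key claim to establish is that for all sufficiently small $\epsilon$, the component $X_0^\epsilon$ of $\bb\psi^{-1}(\overline{B_\epsilon(\bb c)})$ containing $X_0$ is actually contained in $A$. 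Granting this, $X_0^\epsilon\subset \overline{A}$ is a closed subset (closed in $X$ by Lemma \ref{closedLem}) of the compact set $\overline{A}$, hence compact; and it is separated from the rest of the preimage because $X_0^\epsilon\subset A$ while $\bigcup_{0\neq j\in J^\epsilon}X_j^\epsilon$ lies in the complement of $\overline{A}$ — one still has to check the remaining components avoid $\overline A$, but since $\overline A\subset \overline V$ is compact and $\overline A\cap(\text{other components})$ shrinks with $\epsilon$, a contradiction-by-sequences argument in the compact set $\overline V$ (in the spirit of Lemma \ref{compactDist}) closes this.

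To prove the key containment $X_0^\epsilon\subset A$ for small $\epsilon$, I would argue by contradiction using Lemma \ref{nestedCompactSubsets}. Consider the sets $K_\epsilon := \bb\psi^{-1}(\overline{B_\epsilon(\bb c)})\cap \overline{V}$. These are closed in $\overline V$, hence compact (as closed subsets of the compact set $\overline V$), and they decrease as $\epsilon\downarrow 0$. Their intersection over a countable sequence $\epsilon_n\downarrow 0$ is $\bb\psi^{-1}(\{\bb c\})\cap \overline V$, which consists of $X_0$ together with those finitely many other components $X_j$ that meet $V$ — but by the LFN/Corollary \ref{LFNms} separation, $\bigcup_{0\neq j}X_j\subset B$ is disjoint from $X_0$, and in fact $X_0\cap \overline V$ can be separated from these finitely many stray pieces inside $\overline V$ by shrinking $V$ to $A$ first. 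So, replacing $V$ by $A$ at the outset, $\bigcap_n K_{\epsilon_n}=X_0$. Now the open set $A\supset X_0$ contains this intersection, so by Lemma \ref{nestedCompactSubsets} there is $N$ with $K_{\epsilon_n}\subset A$ for all $n\geq N$; set $\epsilon_0=\epsilon_N$. For $0<\epsilon<\epsilon_0$ one then gets $\bb\psi^{-1}(\overline{B_\epsilon(\bb c)})\cap\overline{V}\subset A$, and since $X_0^\epsilon$ is connected, contains $X_0\subset A$, and cannot exit $\overline V$ without leaving $A$ first (it would have to cross $\partial A\subset \overline V\setminus A$), connectedness forces $X_0^\epsilon\subset A$.

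One subtlety I would be careful about: the argument needs the neighborhood to have a boundary ``inside'' $\overline V$, so that a connected set escaping $A$ is detected while still in $\overline V$; this is why I take $A$ with $\overline A\subset \overline V$ rather than $V$ itself. Another point is that $X_0^\epsilon$ is a priori only known to be closed in $\bb\psi^{-1}(\overline{B_\epsilon(\bb c)})$, so I invoke Lemma \ref{closedLem} (using $Y$ metric, hence Hausdorff, so $\overline{B_\epsilon(\bb c)}$ is closed) to upgrade it to closed in $X$, which combined with $X_0^\epsilon\subset\overline A$ compact gives compactness. Finally, the separation of $X_0^\epsilon$ from $\bigcup_{0\neq j\in J^\epsilon}X_j^\epsilon$ follows formally from Corollary \ref{LFNms} applied at level $\epsilon$: once we know $X_0^\epsilon$ is compact with $X_0^\epsilon\subset A\subset V$ and the other components lie in $X\setminus\overline A$, disjoint open sets exist.

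The main obstacle I anticipate is the contradiction step establishing $\bigcap_n K_{\epsilon_n}=X_0$ — i.e., ruling out that a sequence of points $y_n\in \bb\psi^{-1}(\overline{B_{\epsilon_n}(\bb c)})\cap\overline V$ accumulates (by compactness of $\overline V$) at a point of $\overline V$ lying in $\bb\psi^{-1}(\{\bb c\})$ but outside $X_0$. This is precisely where the LFN hypothesis must be used decisively: the stray components meeting $V$ are finite in number and can be peeled off by shrinking $V$ to $A$ before running the nested-compact-sets argument, so that no such accumulation point survives.
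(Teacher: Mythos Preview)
Your approach shares the paper's skeleton: invoke Corollary~\ref{LFNms} to obtain disjoint open $A,B$ with $X_0\subset A\subset V$, then use a nested-compact-sets argument (Lemma~\ref{nestedCompactSubsets}) together with connectedness to trap $X_0^\epsilon$ inside $A$ for small $\epsilon$. Your version of this containment step---working with $K_\epsilon=\bb\psi^{-1}(\overline{B_\epsilon(\bb c)})\cap\overline A$, showing $\bigcap_n K_{\epsilon_n}=X_0$, and then using that $K_\epsilon\cap\partial A=\varnothing$ forces the connected set $X_0^\epsilon$ to stay in $A$---is correct and arguably cleaner than the paper's first pass. Compactness of $X_0^\epsilon$ then follows exactly as you say.

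The genuine gap is in the separation step. You assert that for small $\epsilon$ the remaining components $X_j^\epsilon$ ($j\neq 0$) lie in $X\setminus\overline A$, and propose a contradiction-by-sequences to enforce this. That claim is false in general: even when $\bb\psi^{-1}(\{\bb c\})=X_0$ is a single point (so the LFN hypothesis is trivially satisfied), the fattened preimage $\bb\psi^{-1}(\overline{B_\epsilon(\bb c)})$ can have additional components lying entirely inside $A$---think of a one-dimensional $\bb\psi$ with $\bb\psi^{-1}(\{0\})=\{0\}$ but with a sequence of shallow positive local minima accumulating at $0$. Your sequence argument produces a limit point $y\in X_0$ and shows each offending component sits inside $X_0^{\epsilon'}$ for any fixed $\epsilon'>0$, but this does not force that component to coincide with $X_0^{\epsilon_n}$ at its own scale $\epsilon_n$, so no contradiction results.

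The paper's remedy is a two-stage shrinking. First fix $\epsilon'$ so that $\bb\psi^{-1}(\overline{B_{\epsilon'}(\bb c)})\subset A\cup B$; then $X_0^{\epsilon'}\subset A$ is compact while all $j\neq 0$ components at level $\epsilon'$ land in $(A\setminus X_0^{\epsilon'})\cup B$. Now replace the separating pair by $A':=\operatorname{int}(X_0^{\epsilon'})$ and $B':=(A\setminus X_0^{\epsilon'})\cup B$, show $X_0\subset A'$ (each point of $X_0$ is interior to $X_0^{\epsilon'}$ by continuity of $\bb\psi$ and connectedness of small balls), and rerun the nested-compact-sets argument on $X_0^{\epsilon'/n}$ to obtain $X_0^\epsilon\subset A'$ for all $\epsilon$ below some $\epsilon_0\leq\epsilon'$. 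Since every $j\neq 0$ component at level $\epsilon\leq\epsilon'$ is contained in some $j'\neq 0$ component at level $\epsilon'$, it lies in $B'$, and $A',B'$ give the desired separation. Your outline is essentially one iteration short of this.
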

\begin{proof}
Let $A \subset V$ and $B$ be such disjoint open sets from Corollary \ref{LFNms}. Now suppose for all $\epsilon>0$, there exists $\bb x \in \bb \psi^{-1}(\overline{B_{\epsilon}(\bb c)}) \cap (X-(A\bigcup B))$. Then for all $\epsilon>0$, $\bb x \in \psi^{-1}(\overline{B_{\epsilon}(\bb c)})$, or equivalently $\bb \psi(\bb x)=\bb c$. This implies $\bb x \in X_{j} \subset A\bigcup B$ for some $j\in J$, which contradicts $\bb x \in X-(A\bigcup B)$. It follows that there exists $\epsilon'>0$ so that if $0<\epsilon\leq\epsilon'$, 
\[
\bigcup_{j \in J^\epsilon} X_j^\epsilon = \bb \psi^{-1}(\overline{B_{\epsilon}(\bb c)}) \subset A\bigcup B.
\]
Since $A, B$ are disjoint and $X_j^{\epsilon'}$ is connected for any $j\in J^{\epsilon'}$, either $X_j^{\epsilon'}\subset A$ or $X_j^{\epsilon'} \subset B$. In the case when $j=0$, then $X_0^{\epsilon'}\subset A$,
since otherwise $X_0 \subset X_0^{\epsilon'}\subset B$ which contradicts $X_0\cap B = \varnothing$. Moreover, $X_0^{\epsilon'}$ is compact, since $X_0^{\epsilon'}$ is closed by Lemma \ref{closedLem} and $X_0^{\epsilon'}\subset A \subset V\subset \overline{V}$ with $\overline{V}$ compact. Similarly for $0\neq j \in J^{\epsilon'}$, in the case if $X_j^{\epsilon'} \subset A$, then $X_j^{\epsilon'} \subset A-X_0^{\epsilon'}$, since $X_j^{\epsilon'}$ and $X_0^{\epsilon'}$ are disjoint if $j\neq 0$. Thus, for any $0<\epsilon\leq\epsilon'$,
\begin{equation}
\bigcup_{0\neq j \in J^\epsilon} X_j^\epsilon \subset \bigcup_{0\neq j \in J^{\epsilon'}} X_j^{\epsilon'} \subset (A-X_0^{\epsilon'})\bigcup B
\label{containinBp}
\end{equation}
Now define the following two disjoint open subsets,
\[
A' := int (X_0^{\epsilon'}), \hskip 5mm B' := (A-X_0^{\epsilon'})\bigcup B.
\]
For the moment, assume the following claim is true:
\begin{claim}
For some $\epsilon_0\leq\epsilon'$, $X_0^\epsilon$ is compact and $X_0^\epsilon \subset A'$ for all $0<\epsilon<\epsilon_0$.
\label{containinAp}
\end{claim}
Thus, combining \eqref{containinBp} and Claim \ref{containinAp} implies the theorem. It remains to show Claim \ref{containinAp} for which we proceed in two main steps. First, we show that,
\begin{equation}
X_0 \subset A'. \label{X0inAp}
\end{equation} Indeed, let $\bb x \in X_0\subset X_0^{\epsilon'}$, then by continuity of $\bb \psi$, there exists $r>0$ so that, 
$$B_r(\bb x)\subset \bb \psi^{-1}(B_{\epsilon'}(\bb c)) \subset \bigcup_{j \in J^{\epsilon'}} X_j^{\epsilon'}$$
Since $B_r(\bb x)$ is connected and $\{X_j^{\epsilon'}\}_{j\in J^{\epsilon'}}$ are connected components, $B_r(\bb x) \subset X_j^{\epsilon'}$ for some $j\in J^{\epsilon'}$. Supposing $j\neq 0$ implies the contradiction that $\bb x \in X_j^{\epsilon'} \cap X_0^{\epsilon'} =\varnothing$. So $B_r(\bb x) \subset X_0^{\epsilon'}$ or in other words $\bb x$ is an interior point of $X_0^{\epsilon'}$ which implies \eqref{X0inAp}.

Secondly, we show there exists $\epsilon_0\leq\epsilon'$ such that if $0<\epsilon<\epsilon_0$,
\begin{equation}
X_0^\epsilon \subset A'. \label{X0epsinAp}
\end{equation}
To show \eqref{X0epsinAp}, define $A_n:=X_0^{\frac{\epsilon'}{n}}$. Since each $A_{n+1} \subset A_n$ are closed by Lemma \ref{closedLem} and $X_0\subset A_n \subset X_0^{\epsilon'}$ with $X_0^{\epsilon'}$ compact, $\{A_n\}_{n\in\mathbb{N}}$ is a collection of nonempty, nested, compact subsets. Moreover, $\bigcap_{n\in \mathbb{N}} A_n=X_0 \subset A'$ by \eqref{X0inAp}. Thus, by Lemma \ref{nestedCompactSubsets}, there exists a positive integer $N$ such that $A_n \subset A'$ if $n\geq N$. In other words, for $\epsilon_0:= \frac{\epsilon'}{N}$, $X_0^{\epsilon} \subset A'$ if $0<\epsilon<\epsilon_0$ and each $X_0^{\epsilon}$ is compact since $A'\subset X_0^{\epsilon'}$ is compact as shown earlier. Thus, Claim \ref{containinAp} is proved.
\end{proof}

\begin{remark} The assumption that $\overline{V}$ is compact in Theorem \ref{keyLemma} can be omitted for metric spaces with the Heine-Borel property, such as when $X=\mathbb{R}^n$.
\end{remark}

\subsection{Long-term stability theorem}
We are now in the position to show the long-term stability result. In essence, under appropriate conditions, the global error is bounded for all time for conservative methods.

\begin{theorem}[Main stability theorem] \label{mainTheorem}
Let $X=\mathbb{R}^n$ and $Y=\mathbb{R}^m$ with the Euclidean norm $\norm{\cdot}$ as their metric. Let $\bb x\in C^1(I;U)\cap C^p([0,t_{\mu-1}];U)$ be the unique solution to the ODE \eqref{eq:ODE} with $\bb x(0)=\bb x_0$ and $\bb \psi(\bb x_0) = \bb c$. For some $\tau_0>0$, assume the family of functions $\{\bb \psi^\tau\}_{0<\tau<\tau_0}$ is equicontinuous on $U\times\cdots\times U \subset\mathbb{R}^{n \mu}$ and let $\bb x_{k+1}$ be the unique solution to a  conservative $p$-th order $\mu$-step discretization \eqref{eq:disc} with the UBD property where $\bb \psi^\tau(\bb x_{\mu-1},\dots,\bb x_0) = \bb c^\tau$ and the $\mu$ initial values satisfies the hypothesis of Lemma \ref{initLemma}.

If $X_0$ has a bounded LFN with $\bb x_0 \in X_0\subset U$, then there exists positive constants $\tau^*$, $C$ (independent of $\tau$ and $k$) such that for all $0<\tau<\tau^*$ and $k\in \mathbb{N}$,
\begin{equation*}
\norm{\bb x(t_k)-\bb x_{k}} \leq C.
\end{equation*}
\end{theorem}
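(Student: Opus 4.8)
\emph{Proof plan.} The plan is to trap the discrete trajectory $\{\bb x_k\}$ inside a fixed compact neighbourhood of $X_0$ for all time, using Theorem~\ref{keyLemma} to build the neighbourhood and the conservation law \eqref{discCons} to keep the iterates from escaping it; the error bound will then simply be the diameter of that neighbourhood. First I would set up the topology. Since $X_0$ has a bounded LFN (which we take inside $U$) and $\mathbb{R}^n$ has the Heine--Borel property, $X_0$ is compact; by Theorem~\ref{boundedness} the exact solution satisfies $\bb x(t)\in X_0$ for all $t$, and Theorem~\ref{keyLemma} supplies an $\epsilon_0>0$ together with disjoint open sets $A',B'$ (with $\overline{A'}$ a fixed compact subset of $U$) such that for every $0<\epsilon<\epsilon_0$ the component $X_0^\epsilon$ is compact, $X_0^\epsilon\subset A'$, and $\bigcup_{0\neq j\in J^\epsilon}X_j^\epsilon\subset B'$; moreover $X_0\subset\operatorname{int}(X_0^\epsilon)$, since for $\bb x\in X_0$ a small ball about $\bb x$ maps into $B_\epsilon(\bb c)$ and, being connected, lies inside $X_0^\epsilon$. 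Fix one such $\epsilon$ and set $\rho:=\operatorname{dist}(X_0^\epsilon,\mathbb{R}^n\setminus A')>0$ and $\rho_0:=\operatorname{dist}(X_0,\mathbb{R}^n\setminus\operatorname{int}(X_0^\epsilon))>0$, both positive by Lemma~\ref{compactDist}. Finally, combining Lemma~\ref{limitIdentity} with the fact that an equicontinuous family converging pointwise on a compact set converges uniformly there gives $\sup_{\bb y\in\overline{A'}}\norm{\bb\psi^\tau(\bb y,\dots,\bb y)-\bb\psi(\bb y)}\to 0$ as $\tau\to 0$.

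The core is the claim that, for all sufficiently small $\tau$, $\bb x_k\in X_0^\epsilon$ for every $k\in\mathbb{N}$, which I would prove by induction on $k$. For $0\le k\le\mu-1$ the hypothesis of Lemma~\ref{initLemma} gives $\norm{\bb x(t_k)-\bb x_k}\le C\tau^p$, so $\bb x_k\in X_0^\epsilon$ once $C\tau^p<\rho_0$. For the step $k\to k+1$ with $k\ge\mu-1$, assume $\bb x_0,\dots,\bb x_k\in X_0^\epsilon$; then the local contraction property with the compact set $K=X_0^\epsilon$ and radius $\rho$ yields a $k$-independent threshold below which $\bb x_{k+1}\in\bigcup_{i=0}^{\mu-1}B_\rho(\bb x_{k-i})\subset A'$. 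Since the method is conservative, $\bb\psi^\tau(\bb x_{k+1},\bb x_k,\dots,\bb x_{k-\mu+2})=\bb c^\tau$, hence
\[
\norm{\bb\psi(\bb x_{k+1})-\bb c}&\le\norm{\bb\psi(\bb x_{k+1})-\bb\psi^\tau(\bb x_{k+1},\dots,\bb x_{k+1})}+\norm{\bb c^\tau-\bb c}\\
&\quad+\norm{\bb\psi^\tau(\bb x_{k+1},\dots,\bb x_{k+1})-\bb\psi^\tau(\bb x_{k+1},\bb x_k,\dots,\bb x_{k-\mu+2})}.
\]
The first term is at most $\sup_{\bb y\in\overline{A'}}\norm{\bb\psi^\tau(\bb y,\dots,\bb y)-\bb\psi(\bb y)}$, the second tends to $0$ by Lemma~\ref{initLemma}, and the third vanishes when $\mu=1$ and, for $\mu\ge2$, is made small by equicontinuity of $\{\bb\psi^\tau\}$ provided the window $(\bb x_{k+1},\bb x_k,\dots,\bb x_{k-\mu+2})$ has small diameter. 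Thus, for $\tau$ small enough, $\norm{\bb\psi(\bb x_{k+1})-\bb c}<\epsilon$, i.e.\ $\bb x_{k+1}\in\bb\psi^{-1}(\overline{B_\epsilon(\bb c)})=\bigcup_{j\in J^\epsilon}X_j^\epsilon$; together with $\bb x_{k+1}\in A'$, $A'\cap B'=\varnothing$, and $\bigcup_{j\neq0}X_j^\epsilon\subset B'$, this forces $\bb x_{k+1}\in X_0^\epsilon$ and closes the induction. Taking $\tau^\ast$ to be the minimum of the finitely many, $k$-independent, thresholds above, we obtain for all $0<\tau<\tau^\ast$ and all $k$ that $\bb x(t_k)\in X_0\subset X_0^\epsilon\ni\bb x_k$, whence $\norm{\bb x(t_k)-\bb x_k}\le\operatorname{diam}(X_0^\epsilon)=:C$, a constant depending only on the geometry of $X_0$.

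The step I expect to be the main obstacle is the third term above when $\mu\ge2$: because $\bb\psi^\tau$ matches $\bb\psi$ only near the diagonal, the conservation law constrains $\bb\psi(\bb x_{k+1})$ near $\bb c$ only if the last $\mu$ iterates are clustered, whereas the local contraction property by itself merely bounds how far $\bb x_{k+1}$ can jump in one step and so permits the window $(\bb x_{k+1},\dots,\bb x_{k-\mu+2})$ to spread apart as $k$ grows. To handle this I would run, alongside the main induction, a secondary one showing that consecutive iterates stay $O(\tau^{\min\{1,p\}})$-apart uniformly in $k$ as long as the trajectory remains in the compact region $X_0^\epsilon$ --- a zero-stability-type estimate that the local contraction property together with consistency of the $\mu$-step method is designed to deliver, and the one place where genuine analysis rather than topology enters the argument. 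For one-step methods this complication is absent and the proof reduces to the purely topological trapping above.
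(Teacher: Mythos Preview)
Your plan coincides with the paper's proof: both apply Theorem~\ref{keyLemma} to produce a compact $X_0^\epsilon$ separated from the remaining components, run a strong induction on $k$ to keep $\bb x_{k+1}\in X_0^\epsilon$, decompose $\norm{\bb\psi(\bb x_{k+1})-\bb c}$ into the identical three pieces (diagonal limit via Lemma~\ref{limitIdentity}, $\norm{\bb c-\bb c^\tau}$ via Lemma~\ref{initLemma}, and an equicontinuity term), and finish with $C=\operatorname{diam}(X_0^\epsilon)$.

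The one place you diverge is exactly the one you flagged. For the equicontinuity term with $\mu\ge 2$ the paper does \emph{not} run a secondary zero-stability induction; it simply invokes the local contraction property a second time on the compact $X_0^\epsilon$ with radius $\delta_1=\min\{\delta,D_\epsilon/2\}$ and passes directly from $\bb x_{k+1}\in\bigcup_{i}B_{\delta_1}(\bb x_{k-i})$ to $\max_i\norm{\bb x_{k+1}-\bb x_{k-i}}<\delta_1$. Your caution here is well placed --- membership in the union only guarantees closeness to \emph{some} previous iterate, not to all of them --- so the paper is terse at precisely the step you anticipated as the obstacle. Your proposal to carry a uniform window-diameter bound through the induction (effectively reading local contraction as $\bb x_{k+1}\in B_r(\bb x_k)$, which holds for typical methods with $\bb x_{k+1}=\bb x_k+O(\tau)$ on compacts) is the natural way to make this step rigorous; the paper's argument implicitly relies on the same strengthening without spelling it out.
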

\begin{proof} 
Since $X_0$ is closed and bounded in $X$, by Lemma \ref{closedLem}, $X_0$ is compact by Heine-Borel's theorem and a global solution $\bb x \in C^{1}(\mathbb{R};X_0)$ exists by Theorem \ref{boundedness}. 

Now the proof proceeds in three main steps: First, we will show that $X_0^\epsilon\subset U$ for sufficiently small $\epsilon$ and that $X_0^\epsilon$ is separated from the other connected components $X_j^\epsilon$ for small enough $\epsilon$. Second, we will show by induction and by the uniformly bounded displacement property that $\bb x_{k+1}$ lies in the pre-image $\bb \psi^{-1}(\overline{B_{\epsilon}(\bb c)})$ for sufficiently small $\tau$, which would imply $\bb x_{k+1}\in X_0^\epsilon$ via a contradiction argument. Finally, we combine these two main results to show the long term stability for conservative methods. 

The first main step is to show the following claim.
\begin{claim}There is some $\epsilon_1>0$ such that if $0<\epsilon<\epsilon_1$, then $X_0^\epsilon\subset U$ is compact and $d_X\left(X_{0}^\epsilon, \bigcup_{0\neq j \in J^\epsilon} X_j^\epsilon\right)>0$. \label{claim:1}
\end{claim}

To show Claim \ref{claim:1}, note that by Theorem \ref{keyLemma} and since $X_0$ has a LFN, there exists $\epsilon_0>0$ so that if $0<\epsilon<\epsilon_0$, $X_{0}^\epsilon$ is compact and is separated from $\bigcup_{0\neq j \in J^\epsilon} X_j^\epsilon$. Furthermore, define the nested nonempty compact subsets $A_n:=X_{0}^{\frac{\epsilon_0}{n}}$ and so $\bigcap_{n\in \mathbb{N}} A_n = X_0 \subset U$. Thus, by Lemma \ref{nestedCompactSubsets}, for some positive integer $N$, $X_{0}^\epsilon \subset U$ if $0<\epsilon < \epsilon_1:=\frac{\epsilon_0}{N}$. So if $0<\epsilon<\epsilon_1$, $X_{0}^\epsilon \subset U$ and Lemma \ref{compactDist} implies there is a constant $D_\epsilon>0$ so that, 
\begin{equation}
D_\epsilon\leq \norm{\bb x-\bb y}, \text{ for } \bb x \in X_{0}^\epsilon, \bb y \in \bigcup_{0\neq j \in J^\epsilon} X_j^\epsilon, \label{positiveDist}
\end{equation} which shows Claim \ref{claim:1}. In the following, $\epsilon$ is any fixed value satisfying $0<\epsilon<\epsilon_1$. 

The second main step is to show the next claim using strong induction.
\begin{claim}
There is some $\tau^*>0$ (independent of $k$) such that if $0<\tau<\tau^*$, then $\bb x_{k}\in X_0^\epsilon$ for all $k\in \mathbb{N}$. \label{claim:2}
\end{claim}

To show Claim \ref{claim:2}, we first establish the base cases of $\bb x_{k}\in X_0^\epsilon$ holds for all $0\leq k \leq \mu-1$ with sufficiently small $\tau$. 
First note that for $\delta_1>0$ small enough, the closed ball around $X_0$, $\overline{B_{\delta_1}(X_0)}$, is contained in $X_0^\epsilon$. Indeed, define the nested nonempty compact subsets $A_n:=\overline{B_{\frac{1}{n}}(X_0)}$ and so $\bigcap_{n\in \mathbb{N}} A_n = X_0 \subset X_0^\epsilon$. Thus, by Lemma \ref{nestedCompactSubsets}, for some positive integer $N$, $\overline{B_{\delta_1}(X_0)}\subset X_0^\epsilon$ if $0<\delta_1<\frac{1}{N}$. Moreover, by the hypothesis of Lemma \ref{initLemma}, there exists a constant $\tau_{\mu}>0$ such that if $0<\tau<\tau_{\mu}$, $\norm{\bb x(t_k)-\bb x_k} \leq \delta_1$ for all $0\leq k \leq \mu-1$. Since $\bb x(t) \in X_0$ for all $t \in \mathbb{R}$, then this implies that $\bb x_k\in \overline{B_{\delta_1}(X_0)} \subset X_0^\epsilon$ for all $0\leq k \leq \mu-1$, if $0<\tau<\tau_{\mu}$. This shows the base cases of Claim \ref{claim:2}.  

Next we prove the strong induction step; that is there is a $\tau'>0$ (independent of $k$) so that if $0<\tau<\tau'$ and $\{\bb x_k,\dots,\bb x_{k-\mu+1}\}\subset X_{0}^\epsilon$, then $\bb x_{k+1}\in X_{0}^\epsilon$ for all $k\geq \mu-1$. 

By the strong induction hypothesis and Claim \ref{claim:1}, $\{\bb x_k,\dots,\bb x_{k-\mu+1}\}\subset X_{0}^\epsilon \subset U$, which is in the domain of $\bb \psi$ and $\bb \psi^\tau$. However, before we can proceed to evaluate $\bb \psi$ and $\bb \psi^\tau$ at $\bb x_{k+1}$, we need to guarantee that such expressions are well-defined; that is $\bb x_{k+1}\in U$ for sufficiently small $\tau$. To show this, we again employed Lemma \ref{nestedCompactSubsets} by defining $A_n := \overline{B_{\frac{1}{n}}(X_0^\epsilon)}$ and so $\bigcap_{n\in \mathbb{N}} A_n = X_0^\epsilon \subset U$ by Claim \ref{claim:1}. Hence, for some large $N$, $\overline{B_{\frac{1}{N}}(X_0^\epsilon)} \subset U$. Now by the strong induction hypothesis that $\{\bb x_k,\dots,\bb x_{k-\mu+1}\}\subset X_{0}^\epsilon$, the UBD property implies there exists a $\tau_1>0$ such that if $0<\tau<\tau_1$, $\bb x_{k+1}\in \displaystyle \bigcap_{0\leq i\leq \mu-1} \overline{B_{\frac{1}{N}}(\bb x_{k-i})} \subset \overline{B_{\frac{1}{N}}(X_{0}^\epsilon)} \subset U$. So $\bb x_{k+1} \in U$, if $0<\tau<\tau_1$, and the expressions $\bb \psi$ and $\bb \psi^\tau$ evaluated at $\bb x_{k+1}$ are well-defined. 

Now we are in the position to derive various estimates for $\bb \psi$ and $\bb \psi^\tau$. By equicontinuity and Lemma \ref{limitIdentity}, there exists a $\tau_2>0$ depending only on $\epsilon$ such that if $0<\tau<\tau_2\leq \min\{\tau_0, \tau_{1}\}$,
\begin{equation}
\norm{\bb \psi(\bb x_{k+1})-\bb \psi^\tau(\bb x_{k+1},\bb x_{k+1},\dots,\bb x_{k+1})} \leq \frac{\epsilon}{3}.
\label{eq:est1}
\end{equation}
Also by Lemma \ref{initLemma}, there exists a $\tau_3>0$ depending only on $\epsilon$ such that if $0<\tau<\tau_3\leq \min\{\tau_0, \tau_{1}\}$,
\begin{equation}
\norm{\bb c-\bb c^\tau} \leq \frac{\epsilon}{3}.
\label{eq:est2}
\end{equation}
Moreover, by equicontinuity for $0<\tau<\tau_4\leq \min\{\tau_0, \tau_{1}\}$, there exists a $\delta_2>0$ depending only on $\epsilon$ such that if $\max_{0\leq i\leq \mu-1}\norm{\bb x_{k+1}-\bb x_{k-i}}\leq \delta_2$, then
\begin{equation}
\norm{\bb \psi^\tau(\bb x_{k+1},\bb x_{k+1},\dots,\bb x_{k+1}) - \bb \psi^\tau(\bb x_{k+1},\bb x_{k},\dots,\bb x_{k-\mu+1})} \leq \frac{\epsilon}{3}.
\label{eq:est3}
\end{equation}
By the UBD property of \eqref{eq:disc} with $\delta_3 := \min\{\delta_2, \frac{D_\epsilon}{2}\}$, there exists a $\tau_{5}>0$ depending on $\delta_3$ and $X_{0}^\epsilon$ so that $\bb x_{k+1}\in \displaystyle \bigcap_{0\leq i\leq \mu-1} \overline{B_{\delta_3}(\bb x_{k-i})}$ for $0<\tau<\tau_{5}$. In other words, $\max_{0\leq i\leq \mu-1}\norm{\bb x_{k+1}-\bb x_{k-i}}\leq\delta_3$ if $0<\tau<\tau_{5}$. Thus combining \eqref{discCons} with \eqref{eq:est1}-\eqref{eq:est3}, if $0<\tau<\tau':=\min \{\tau_2,\dots, \tau_5\}$,
\begin{align*}
\norm{\bb\psi(\bb x_{k+1})-\bb c} &\leq \norm{\bb \psi(\bb x_{k+1})-\bb \psi^\tau(\bb x_{k+1},\bb x_{k+1},\dots,\bb x_{k+1})}  \\
& \hskip 2mm + \norm{\bb \psi^\tau(\bb x_{k+1},\bb x_{k+1},\dots,\bb x_{k+1}) - \bb \psi^\tau(\bb x_{k+1},\bb x_{k},\dots,\bb x_{k-\mu+1})} \\
& \hskip 2mm + \underbrace{\norm{\bb \psi^\tau(\bb x_{k+1},\bb x_{k},\dots,\bb x_{k-\mu+1})-\bb c}}_{=\norm{\bb c^\tau-\bb c}} \leq \epsilon.
\end{align*}
In other words, we have shown that if $0<\tau< \tau'$,
\begin{equation}
\bb x_{k+1} \in \bb \psi^{-1}(\overline{B_{\epsilon}(\bb c)}) \cap \left(\bigcap_{0\leq i\leq \mu-1} \overline{B_{\frac{D_\epsilon}{2}}(\bb x_{k-i})}\right).
\label{eq:preImageNeighbor}
\end{equation}
We now claim that $\bb x_{k+1} \in X_{0}^\epsilon$, if $0<\tau< \tau'$. Suppose not, then $\bb x_{k+1} \in \bb \psi^{-1}(\overline{B_{\epsilon}(\bb c)})-X_{0}^\epsilon$. In particular, $\bb x_{k+1} \in X_{j}^\epsilon$ for some $j\neq 0$, which leads to a contradiction since by \eqref{positiveDist} and \eqref{eq:preImageNeighbor}, for any $0\leq i\leq \mu-1$,
\begin{equation*}
0<D_\epsilon \leq d(\bb x_{k+1}, X_{0}^\epsilon) \leq \underbrace{d(\bb x_{k-i}, X_{0}^\epsilon)}_{=0}+\underbrace{d(\bb x_{k-i}, \bb x_{k+1})}_{\leq D_\epsilon/2}.
\end{equation*}
Hence, the strong induction step is shown and Claim \ref{claim:2} is true for $\tau^*:=\min \{\tau_\mu, \tau'\}$.

Finally, the long term stability follows from Claim \ref{claim:1} and \ref{claim:2}. Since $X_{0}^\epsilon$ is bounded and for all $k\in \mathbb{N}$, $\bb x(t_k) \in X_0\subset X_{0}^\epsilon$ and $\bb x_k \in X_{0}^\epsilon$ if $0<\tau<\tau^*$, we can conclude for any fixed $0<\epsilon<\epsilon_1$,
\[
\norm{\bb x(t_k)-\bb x_{k}} \leq \sup_{\bb x, \bb y \in X_{0}^\epsilon} \norm{\bb x-\bb y} = diam(X_{0}^\epsilon) =: C.
\]
\end{proof}

\begin{remark} \label{avgIdenProof}
Note that if $\bb \psi^\tau$ does not depend on $\tau$ explicitly, then the main stability result can be shown readily by using the averaging identity of Corollary \ref{avgIdentity}. Specifically, we have by the averaging identity that,
\[
\norm{\bb\psi(\bb x_{k+1})-\bb c} &= \norm{\bb\psi^\tau (\bb x_{k+1},\dots,\bb x_{k+1})-\bb c} \\
&\leq \norm{\bb\psi^\tau (\bb x_{k+1},\dots,\bb x_{k+1})- \bb \psi^\tau(\bb x_{k+1},\bb x_{k},\dots,\bb x_{k-\mu+1})} \\
&\hskip 2mm + \norm{\bb \psi^\tau(\bb x_{k+1},\bb x_{k},\dots,\bb x_{k-\mu+1})-\bb c}.
\] Thus, by continuity of $\bb \psi^\tau$, Lemma \ref{initLemma} and the uniformly bounded displacement property, \ref{eq:preImageNeighbor} holds for sufficiently small $\tau$ and the proof proceeds similarly as in the equicontinuous case.
\end{remark}

\begin{remark}
We note that existence of a bounded connected component can be difficult to establish in general, as we discuss in the conclusion. For applications arising from physics, the energy function is a scalar conserved quantity $\psi(\bb x)$ and often satisfies the coercive property; $|\psi(\bb x)| \rightarrow \infty$ as $\norm{\bb x}\rightarrow 0$. In this case, it is well-known that nonempty preimage of $\bb \psi$ is bounded.
\end{remark}

\begin{remark}
Similarly, it may be difficult to establish in general whether a given bounded connected component of $\bb \psi^{-1}(\bb c)$ has a bounded locally finite neighborhood. Indeed, in the special case when $\bb \psi^{-1}(\bb c)$ has only finitely many connected components, it follows from definition that every bounded connected component has a bounded locally finite neighborhood.
\end{remark}

\subsection{Long-term stability in practice}
\label{sec:CMIP}
We conclude this section with a discussion on the effect of error accumulation for conservative methods. 

Recall that the main stability result of Theorem \ref{mainTheorem} holds provided we can ensure \ref{eq:preImageNeighbor} holds for some $\epsilon>0$ with a nonzero separation distance $D_\epsilon$ between $X_{0}^\epsilon$ and $\bigcup_{0\neq j \in J^\epsilon} X_j^\epsilon$. However, even for conservative methods, $\bb \psi^\tau\discdep{\bb x^\tau}{k+1}\neq \bb \psi^\tau\discdep{\bb x^\tau}{k}$ in practice due to round-off error of inexact arithmetic operations in computing $\bb \psi^\tau$. Moreover, $\bb x_{k+1}$ is often an inexact solution to some iterative method for an implicit method. These errors, while negligible at each time step, can accumulate over long term to be sufficiently large, leading to $\bb x_{k+1} \notin \bb \psi^{-1}(\overline{B_{\epsilon}(\bb c)})$ and violating in the hypotheses of the main theorem. We stress that the error accumulation discussed here is inherent for any finite precision machine when inexact computations are performed over many iterations. In contrast to non-conservative methods, conservative methods are solely limited by these error accumulations depending on machine precision and tolerances used within the method, which we characterize next.

Let $\epsilon_{mach}$ be a fixed machine precision and $\delta_{tol}$ be a fixed tolerance used as the stopping criterion within the iterative procedure of a given conservative method. Then on each successive time step $t_k$, we denote the error $E_k$ accumulated in computing $\bb \psi^\tau$ on a finite precision machine as
\[
\norm{\bb \psi^\tau\discdep{\bb x^\tau}{k} -\bb \psi^\tau\discdep{\bb x^\tau}{k-1}} \leq E_k(\epsilon_{mach},\delta_{tol}).
\]
Thus by triangle inequality, the error in $\bb \psi^\tau$ after $N$ time steps can be estimated as
\[
\norm{\bb \psi^\tau\discdep{\bb x^\tau}{N} -\bb c^\tau}\leq \sum_{k=\mu-1}^N \norm{\bb \psi^\tau\discdep{\bb x^\tau}{k} -\bb \psi^\tau\discdep{\bb x^\tau}{k-1}} \leq \sum_{k=\mu-1}^N E_k(\epsilon_{mach},\delta_{tol}),
\] where $\bb c^\tau := \bb \psi^\tau\discdep{\bb x^\tau}{\mu-1}$ from \eqref{discCons}. Moreover, suppose $E_k$ can be bounded uniformly by some constant $C_a(\epsilon_{mach},\delta_{tol})>0$, then
\[
\norm{\bb \psi^\tau\discdep{\bb x^\tau}{N} -\bb c^\tau} \leq C_aN.
\] In other words, the error in $\bb \psi^\tau$ grows linearly with $N$ in the worst case. However in practice, there may be cancellations within the expressions of the conservative method which can lead to sharper estimates of these round-off errors, as will be illustrated in the numerical example of Section \ref{sec:numeric}. This leads to the following definition of the class of conservative methods with \emph{error accumulation rate} $s$ for some $0<s\leq 1$. 

\begin{definition}
Fix a machine precision $\epsilon_{mach}$ and a tolerance $\delta_{tol}$. A conservative $\mu$-step method $\bb F^\tau$ is said to have an error accumulation rate $s$ if there exists some constants $0<s(\epsilon_{mach},\delta_{tol})\leq 1$ and $C_a(\epsilon_{mach},\delta_{tol})>0$ such that,
\[
\norm{\bb \psi^\tau\discdep{\bb x^\tau}{N} -\bb c^\tau} \leq C_a N^{s}.
\]
\end{definition} 
\begin{remark}
Note that the optimal error accumulation rate $s=\frac{1}{2}$ is known as Brouwer's law \cite{Bro37} and can be achieved for certain linear multi-step methods \cite{GraAE04, Qui94} and Runge-Kutta methods \cite{HaiMclRaz08}.
\end{remark}

Now we state the {\bf arbitrarily long-term stability} theorem on finite precision machines.

\begin{theorem} \label{mainTheoremIP}
Suppose the hypotheses of Theorem \ref{mainTheorem} (Main stability theorem) are satisfied and the $\mu$-step method $\bb F^\tau$ has an error accumulation rate $s$. If $X_0$ has a bounded LFN with $\bb x_0 \in X_0$, then there exists a positive integer $N_{max}$ depending only on $\epsilon_{mach}$ and $\delta_{tol}$, a positive constant $C$ independent of $N_{max}$ and a positive constant $\tau^*$ independent of $\tau$ and $k$ such that if $0<\tau<\tau^*$, then for all $0\leq k\leq N_{max}$,
\begin{equation*}
\norm{\bb x(t_k)-\bb x_{k}} \leq C.
\end{equation*}
\end{theorem}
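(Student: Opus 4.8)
The plan is to re-run the induction of the proof of Theorem \ref{mainTheorem} essentially verbatim, the only change being that the exact discrete conservation \eqref{discCons}, $\bb\psi^\tau\discdep{\bb x^\tau}{k+1}=\bb c^\tau$, is replaced by the error-accumulation estimate $\norm{\bb\psi^\tau\discdep{\bb x^\tau}{k+1}-\bb c^\tau}\leq C_a(k+1)^s$; it is precisely the requirement that this quantity stay small compared with the separation distance $D_\epsilon$ that bounds the range of admissible time steps to $0\leq k\leq N_{max}$.

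First I would set up exactly as in Theorem \ref{mainTheorem}: since $X_0$ has a bounded LFN and is closed (Lemma \ref{closedLem}), it is compact by Heine--Borel, so Theorem \ref{boundedness} gives a global solution $\bb x\in C^1(\mathbb{R};X_0)$. By Theorem \ref{keyLemma} there is $\epsilon_0>0$ such that for $0<\epsilon<\epsilon_0$ the component $X_0^\epsilon$ is compact and separated from $\bigcup_{0\neq j\in J^\epsilon}X_j^\epsilon$, so Lemma \ref{compactDist} yields $D_\epsilon>0$ with the separation bound \eqref{positiveDist}; repeating the nested-compact-set argument of Claim \ref{Uclaim} (via Lemma \ref{nestedCompactSubsets}) produces $\epsilon_1\leq\epsilon_0$ with $X_0^\epsilon\subset U$ for $0<\epsilon<\epsilon_1$. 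Fix any $\epsilon\in(0,\epsilon_1)$. Now define $N_{max}$ to be the largest positive integer with $C_a N_{max}^{\,s}<\epsilon/4$; this is well defined (and large) once $C_a(\epsilon_{mach},\delta_{tol})$ is small enough, and it depends only on $\epsilon_{mach},\delta_{tol}$ (through $C_a,s$) and the fixed $\epsilon$, but not on $\tau$.

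Next I would prove by strong induction on $k$, for $0\leq k\leq N_{max}$ and all $\tau$ below a threshold $\tau^*$, that $\bb x_{k+1}\in X_0^\epsilon$. The base case ($k\leq\mu-1$) follows from the hypothesis of Lemma \ref{initLemma} exactly as in Theorem \ref{mainTheorem}. For the inductive step, under $\{\bb x_k,\dots,\bb x_{k-\mu+1}\}\subset X_0^\epsilon$ the argument of Claim \ref{Uclaim} keeps $\bb x_{k+1}\in U$ for $\tau$ small; then, with $\epsilon/3$ replaced throughout by $\epsilon/4$, the estimates \eqref{eq:est1} and \eqref{eq:est3} hold by Lemma \ref{limitIdentity}, equicontinuity and the local contraction property, and \eqref{eq:est2} gives $\norm{\bb c-\bb c^\tau}<\epsilon/4$ for $\tau$ small by Lemma \ref{initLemma}. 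The only new ingredient is the conservation term: instead of vanishing, $\norm{\bb\psi^\tau(\bb x_{k+1},\bb x_k,\dots,\bb x_{k-\mu+1})-\bb c}\leq \norm{\bb\psi^\tau\discdep{\bb x^\tau}{k+1}-\bb c^\tau}+\norm{\bb c^\tau-\bb c}\leq C_a(k+1)^s+\epsilon/4<\epsilon/2$, using $k+1\leq N_{max}$ and the error accumulation rate. Summing the four contributions gives $\norm{\bb\psi(\bb x_{k+1})-\bb c}<\epsilon$, i.e. $\bb x_{k+1}\in\bb\psi^{-1}(\overline{B_\epsilon(\bb c)})$; the local contraction property (with radius $\leq D_\epsilon/2$, as in Theorem \ref{mainTheorem}) gives $\bb x_{k+1}\in B_{D_\epsilon/2}(\bb x_k)$, and then \eqref{positiveDist} forces $\bb x_{k+1}\in X_0^\epsilon$, since otherwise $D_\epsilon\leq d(\bb x_{k+1},X_0^\epsilon)\leq d(\bb x_k,X_0^\epsilon)+d(\bb x_k,\bb x_{k+1})\leq 0+D_\epsilon/2$. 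Taking $\tau^*$ to be the minimum of the finitely many thresholds encountered --- all independent of $k$, because the local-contraction and equicontinuity bounds depend only on the compact set $X_0^\epsilon$ --- the induction closes. Finally, $\bb x(t_k)\in X_0\subset X_0^\epsilon$ and $\bb x_k\in X_0^\epsilon$ for $0\leq k\leq N_{max}$ yield $\norm{\bb x(t_k)-\bb x_k}\leq diam(X_0^\epsilon)=:C$, which is independent of $N_{max}$. (If in addition $\bb\psi^\tau$ is independent of $\tau$, one may replace the use of Lemma \ref{limitIdentity} by the averaging identity of Corollary \ref{avgIdentity} exactly as in Remark \ref{avgIdenProof}, with the same modification of the conservation term.)

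I expect essentially no conceptual obstacle here: the work is bookkeeping --- splitting $\epsilon$ into four pieces rather than three and extracting the cutoff $N_{max}$ from the rate $s$. The single point that must be handled with care is the dependence structure of the constants: $N_{max}$ is permitted to depend on $\epsilon_{mach},\delta_{tol}$ (and on the fixed choice of $\epsilon<\epsilon_1$), whereas $\tau^*$ and $C$ must not depend on $N_{max}$ --- so that the statement is non-vacuous, and, since $C_a(\epsilon_{mach},\delta_{tol})\to 0$ as $\epsilon_{mach},\delta_{tol}\to 0$, $N_{max}$ may be taken arbitrarily large, which is the ``arbitrarily long-term'' content of the theorem.
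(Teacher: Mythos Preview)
Your proposal is correct and follows essentially the same approach as the paper: re-run the induction of Theorem~\ref{mainTheorem}, replacing exact discrete conservation by the error-accumulation bound and extracting $N_{max}$ from the condition $C_aN_{max}^s<\epsilon/4$, with $C=\mathrm{diam}(X_0^\epsilon)$. The only cosmetic difference is that the paper presents the $\tau$-independent case (via the averaging identity of Remark~\ref{avgIdenProof}) as the primary exposition and relegates the equicontinuous case to a parenthetical, whereas you do the reverse; the bookkeeping with $\epsilon/4$ in place of $\epsilon/3$ and the definition of $N_{max}$ match.
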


\begin{proof}
We highlight the differences in the proof, as it is nearly identical to the proof of Theorem \ref{mainTheorem}. For brevity, we shall focus on the case when $\bb \psi^\tau$ does not explicitly depend on $\tau$, as the same conclusion follows for the equicontinuous case (with possibly a smaller $N_{max}$). Since the quantity $\norm{\bb \psi^\tau\discdep{\bb x^\tau}{k} -\bb c^\tau}\neq 0$ with finite precision arithmetic, by Remark \ref{avgIdenProof}, we need to instead establish the following estimate,
\begin{equation}
\norm{\bb \psi^\tau\discdep{\bb x^\tau}{k+1} -\bb c} \leq \norm{\bb \psi^\tau\discdep{\bb x^\tau}{k+1} -\bb c^\tau}+\norm{\bb c^\tau -\bb c} \leq \frac{\epsilon}{2}, \label{eq:modEst}
\end{equation} for $0<\epsilon<\epsilon_1$ where $\epsilon_1$ is the largest radius around $\bb c$ for which $X_{0}^\epsilon$ is separated from the other connected components $X_j^\epsilon$. 
By Lemma \ref{initLemma}, $\norm{\bb c^\tau -\bb c}\leq \frac{\epsilon}{4}$ for sufficiently small $\tau$. Moreover, since $\bb F^\tau$ is assumed to have an error accumulation rate $s$, then $\norm{\bb \psi^\tau\discdep{\bb x^\tau}{k+1} -\bb c^\tau} \leq C_a N_{max}^{s}$ for all $k+1\leq N_{max}$. Thus, the estimate \eqref{eq:modEst} follows if $N_{max}:=\left(\frac{\epsilon}{4 C_a}\right)^{1/s}$. Finally, we also note that $C:=diam(X^\epsilon_{0})$ as in the main theorem and is independent of $N_{max}$.
\end{proof}

\section{Numerical example: Elliptic curve}
\label{sec:numeric}

We now illustrate the long-term stability theorem for an autonomous system with a specific conserved quantity in the form of an elliptic curve. To obtain numerical results, we use conservative methods derived from the multiplier method \cite{WanBihNav17a}, which was developed as a systematic approach to construct conservative discretizations for general dynamical systems. The multiplier method differs from other general conservative methods, such as projection methods or discrete gradient methods, in that it does not involve projection nor require expressing the right hand side of the ODE system as a skew-symmetric tensor applied to gradients of the first integrals. Also, the multiplier method can readily be applied to dynamical systems without transformation and to conserved quantities which depend explicitly on time \cite{WanBihNav17a}. Furthermore, nonlinear multistep multiplier methods have also been developed in \cite{Wan18} to achieve higher order accuracy.

In the following, we have applied the multiplier method detailed in \cite{WanBihNav17a} and \cite{Wan18} to obtain 1-step, 2-step and 3-step conservative methods for the following autonomous system

\begin{equation}
\bb F\cont{\bb x} :=
\begin{pmatrix}
\dot{x}\\
\dot{y}
\end{pmatrix}- 
\begin{pmatrix}2y \\
 3 x^2+a
\end{pmatrix}=\bb 0, \hskip 10pt \bb x(0) = \bb x_0, \label{ECsystem}
\end{equation} where $a\in\mathbb{R}$. Multiplying $\bb F$ by so-called multiplier matrix $\Lambda(\bb x) =\begin{pmatrix}-3x^2-a &2y\end{pmatrix}$ shows that \begin{equation*}
\psi(\bb x):= y^2-x^3-ax
\end{equation*} is a conserved quantity of \eqref{ECsystem}. Indeed, if $\bb x$ is the unique solution to \eqref{ECsystem},
\[
0&=\Lambda(\bb x) \bb F\cont{\bb x} = (-3x^2-a)(\dot{x}-2y)+2y(\dot{y}-3x^2-a) = D_t \psi(\bb x)
\]

Applying the multiplier method of \cite{WanBihNav17a} to \eqref{ECsystem}, we obtained the following 1-step conservative discretization
\begin{equation}
\begin{pmatrix}
\dfrac{x_{k+1}-x_{k}}{\tau}\\
\dfrac{y_{k+1}-y_{k}}{\tau}
\end{pmatrix}=
\begin{pmatrix}\dfrac{}{}y_{k+1}+y_{k} \\
\dfrac{}{}x_{k+1}^2+x_{k+1}x_k+x_k^2+a
\end{pmatrix}, \label{ECDisc1}
\end{equation} which conserves exactly $\psi^\tau(\bb x_k):=\psi(\bb x_k)$.
Using the higher order versions of multiplier methods of \cite{Wan18}, we obtained the following 2-step conservative discretization
\begin{equation}
\begin{pmatrix}
\dfrac{x_{k+1}-x_{k-1}}{2\tau}\\
\dfrac{y_{k+1}-y_{k-1}}{2\tau}
\end{pmatrix}=
\begin{pmatrix}\dfrac{}{}y_{k+1}+y_{k-1} \\
\dfrac{}{}x_{k+1}^2+x_{k+1}x_{k-1}+x_{k-1}^2+a
\end{pmatrix}, \label{ECDisc2}
\end{equation} which conserves exactly $\psi^\tau(\bb x_k,\bb x_{k-1}):=\frac{1}{2}\left(\psi(\bb x_k)+\psi(\bb x_{k-1})\right)$. Moreover, we also have the 3-step conservative discretization
\begin{equation}
\begin{pmatrix}
\dfrac{11x_{k+1}-18 x_{k}+9x_{k-1}-2x_{k-2}}{6\tau}\\
\dfrac{11y_{k+1}-18 y_{k}+9y_{k-1}-2y_{k-2}}{6\tau}
\end{pmatrix}=
\begin{pmatrix} \dfrac{11y_{k+1}^2-18 y_{k}^2+9y_{k-1}^2-2y_{k-2}^2}{11y_{k+1}-18 y_{k}+9y_{k-1}-2y_{k-2}} \\
\dfrac{11x_{k+1}^3-18 x_{k}^3+9x_{k-1}^3-2x_{k-2}^3}{11x_{k+1}-18 x_{k}+9x_{k-1}-2x_{k-2}}+a
\end{pmatrix}, \label{ECDisc3}
\end{equation} which conserves exactly $\psi^\tau(\bb x_k,\bb x_{k-1}, \bb x_{k-2}):=\frac{1}{6}\left(11\psi(\bb x_k)-7\psi(\bb x_{k-1})+2\psi(\bb x_{k-2})\right)$. 

In Appendix \ref{sec:convOrder}, we observed numerically that the three conservative methods \eqref{ECDisc1}, \eqref{ECDisc2}, \eqref{ECDisc3} are of order two, two and three, respectively. 

\begin{remark} \eqref{ECDisc1} can also be derived using the average vector field method \cite{QuiMcL08}, since closed form integration can be performed for polynomials.
\end{remark}

\begin{remark} Note that the discretized conserved quantities $\psi^\tau$ of \eqref{ECDisc2} and \eqref{ECDisc3} satisfies the averaging identity of Corollary \ref{avgIdentity}.
\end{remark}

\begin{remark} For the nonlinear 1-step, 2-step and 3-step methods of \eqref{ECDisc1}, \eqref{ECDisc2} and \eqref{ECDisc3}, a fixed point type argument can be used to show existence and uniqueness of $\bb x_{k+1}$. We showed the UBD property for \eqref{ECDisc1} in the Appendix and leave details of showing the UBD property for nonlinear multistep conservative methods in \cite{Wan18}. 

\end{remark}

For any $a,b \in\mathbb{R}^n$, it is well-known that the elliptic curve $\psi(\bb x)=b$ has at most two connected components. In particular, if the sign of the discriminant of the cubic polynomial $p(\bb x):=\psi(\bb x)-b$ is given by $\Delta(p):=4a^3+27b^2$ is negative, the elliptic curve has two connected components with one bounded and the other unbounded. Otherwise, the elliptic curve has only one unbounded connected component if $\Delta(p)> 0$.

\subsection{Long-term stability of 1-step conservative method}

We first compare numerical results of the 1-step conservative method \eqref{ECDisc1} with standard first order explicit/implicit methods (Euler/Backward Euler) and symplectic second order explicit/implicit methods (St\"{o}rmer-Verlet/Midpoint) of \cite{HaiLubWan06}.
We considered the case of two connected components with $a=-1$ and $b=0.3849$ ($\Delta(p)<0$), where the bounded connected component of $X_0$ and unbounded connected component of $X_1$ are close to each other but separated as shown in Figure \ref{figs:ECexact} and \ref{figs:ECexactZoom}. In all five methods, the initial conditions were set to be $x_0 = 0.571$ and $y_0 = \sqrt{x_0^3+a x_0+b} \approx 8.33\times 10^{-3}$, which implies, for $\tau$ sufficiently small, the exact solution should remain within the bounded connected component of $X_0$. We have used an uniform time step size of $\tau = 0.3$ with $N=5\times 10^3$ time steps and we employed an absolute tolerance of $\delta_{tol}=5\times 10^{-16}$ with a maximum of $50$ Newton's iterations per time step for the implicit methods.

\begin{figure}[tbhp]
\centering
	\subfloat[Elliptic curve]{\label{figs:ECexact}\includegraphics[width=0.48\textwidth]{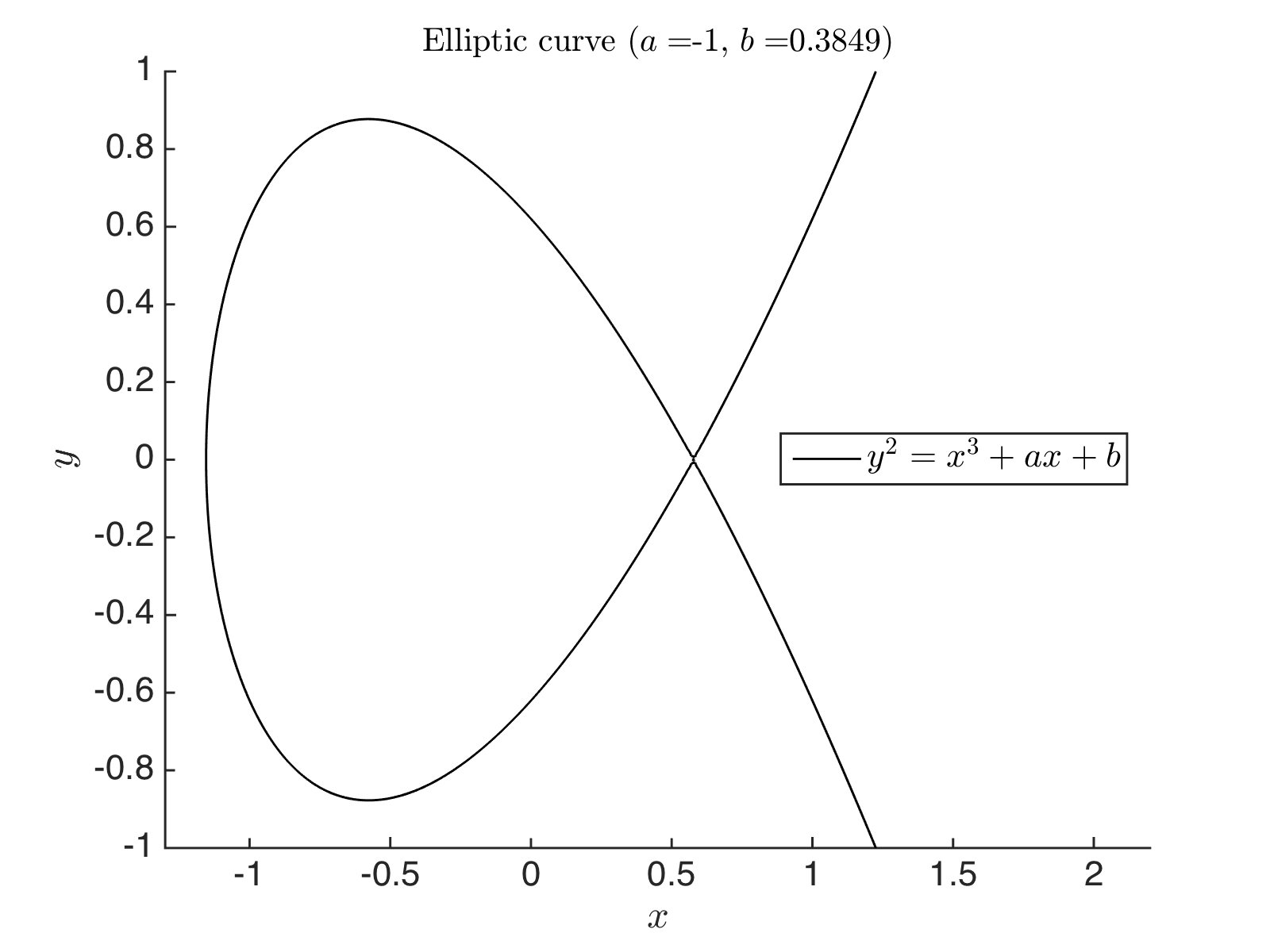}}
	\subfloat[Close-up of the gap between $X_0$ and $X_1$]{\label{figs:ECexactZoom}\includegraphics[width=0.48\textwidth]{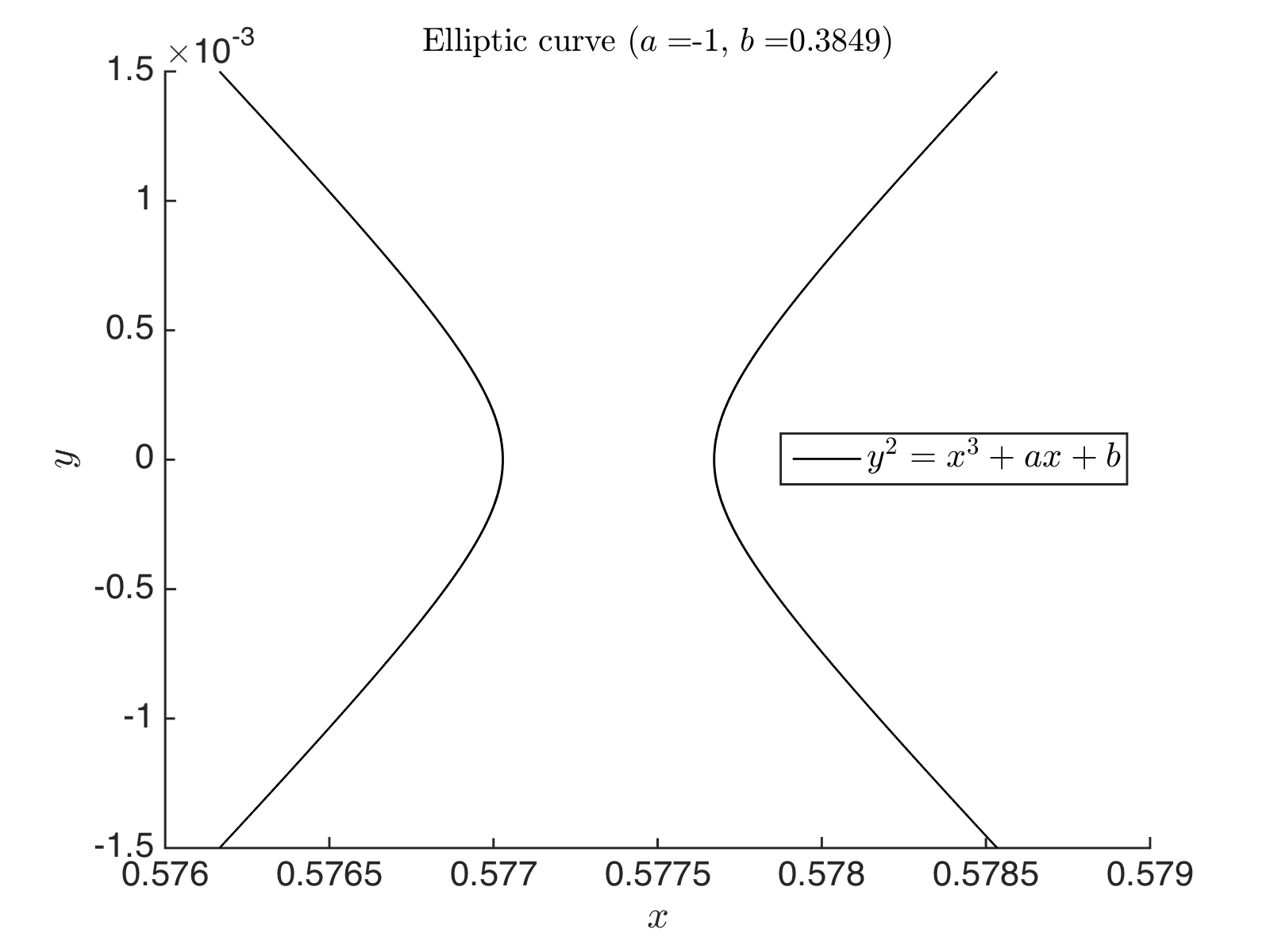}}
	\caption{Two connected components of the preimage of $\psi^{-1}(\{b\})$.}
\end{figure}

\begin{figure}[tbhp]
	\centering
	\label{figs:ECcompare}\includegraphics[width=0.65\textwidth]{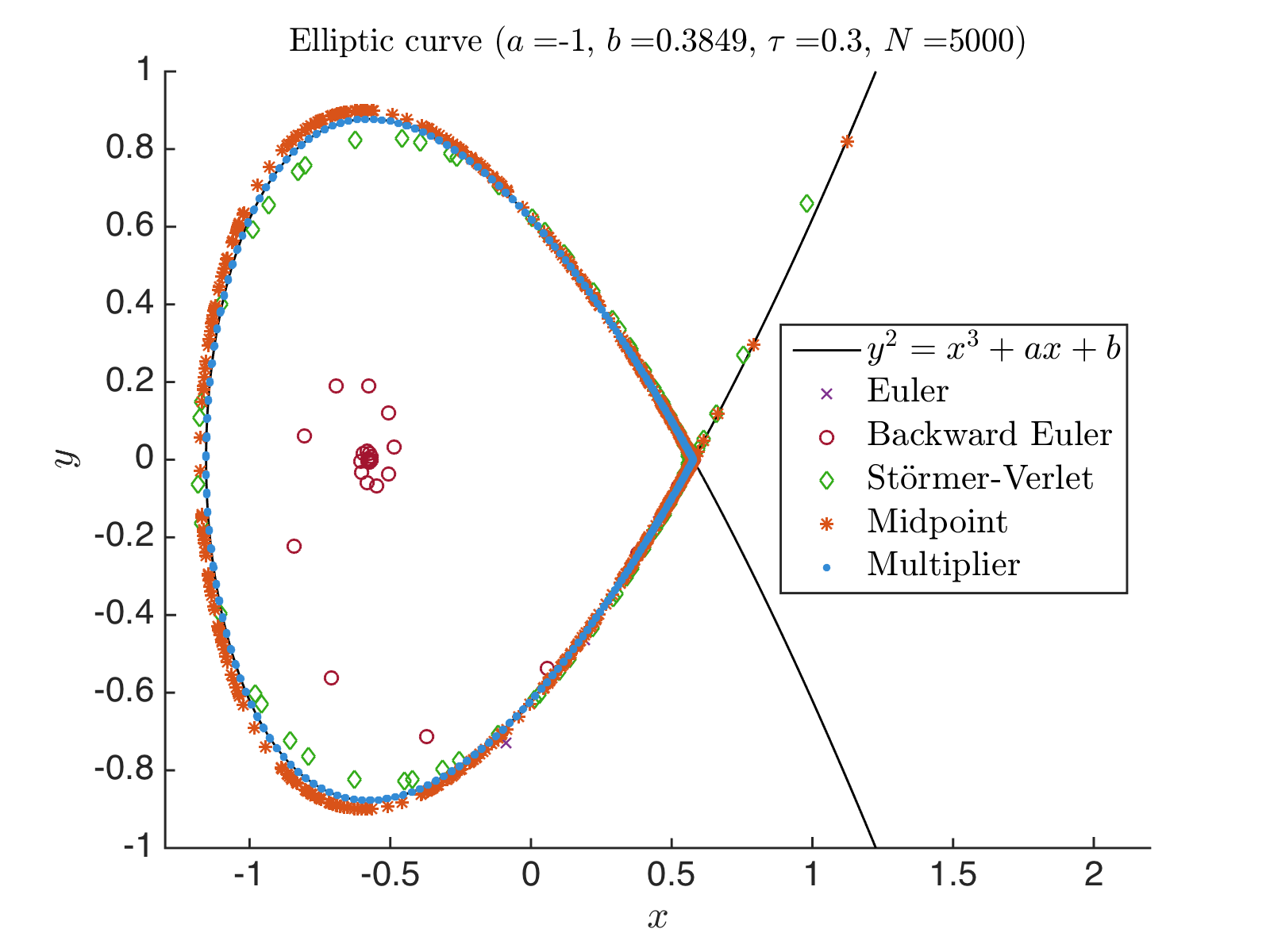}
	\vskip -5mm
	\caption{Comparison of first order standard methods (Euler/Backward Euler), second order symplectic methods (St\"ormer-Verlet/Midpoint) and the 1-step conservative method (Multiplier method).}
\end{figure}
Figure \ref{figs:ECcompare} shows that Euler's method gives an unbounded solution and Backward Euler method leads to a decaying solution to a fixed point $\bb x^* = \begin{pmatrix}-1/\sqrt{3},0\end{pmatrix}^T$. Figures \ref{figs:ECcompare} and \ref{figs:ECzoom1} show St\"ormer-Verlet method gives a solution which loops around the bounded connected component of $X_0$ once before exiting to the unbounded connected component of $X_1$. Similarly, Figures \ref{figs:ECcompare} and \ref{figs:ECzoom2} show the solution of the Midpoint method loops around $X_0$ longer than the St\"ormer-Verlet method before eventually exiting to $X_1$. In contrast, all three figures show that the 1-step multiplier method gives a solution which remains essentially on the bounded connect component of $X_0$ and indeed we observed an error in $\psi$ of $\displaystyle \max_{1\leq i \leq 5\times 10^3}|\psi(\bb x_i) - b|\sim 6.6\times 10^{-15}$.

\begin{figure}[tbhp]
\centering
	\subfloat[]{\label{figs:ECzoom1}\includegraphics[width=0.48\textwidth]{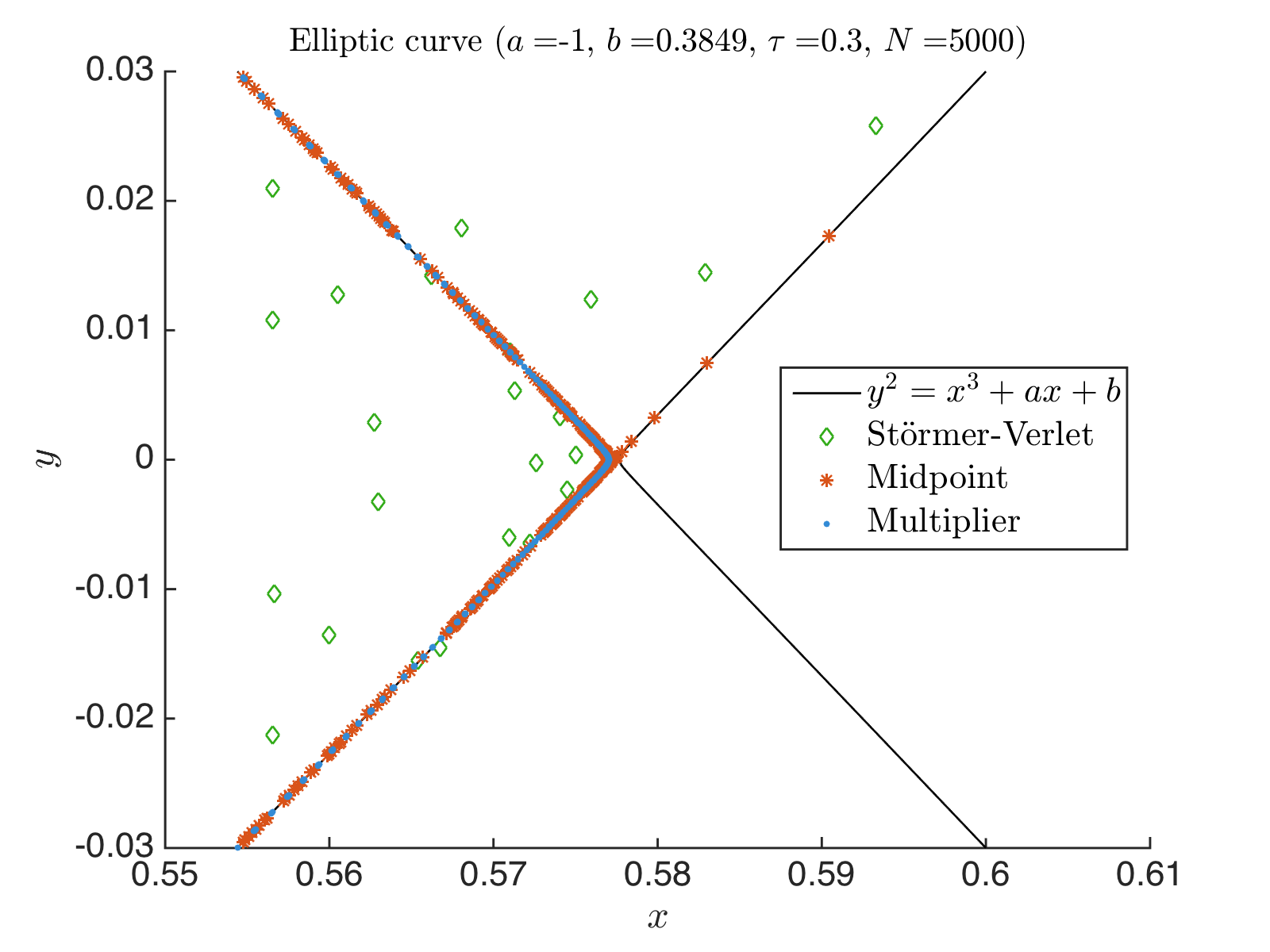}}
	\subfloat[]{\label{figs:ECzoom2}\includegraphics[width=0.48\textwidth]{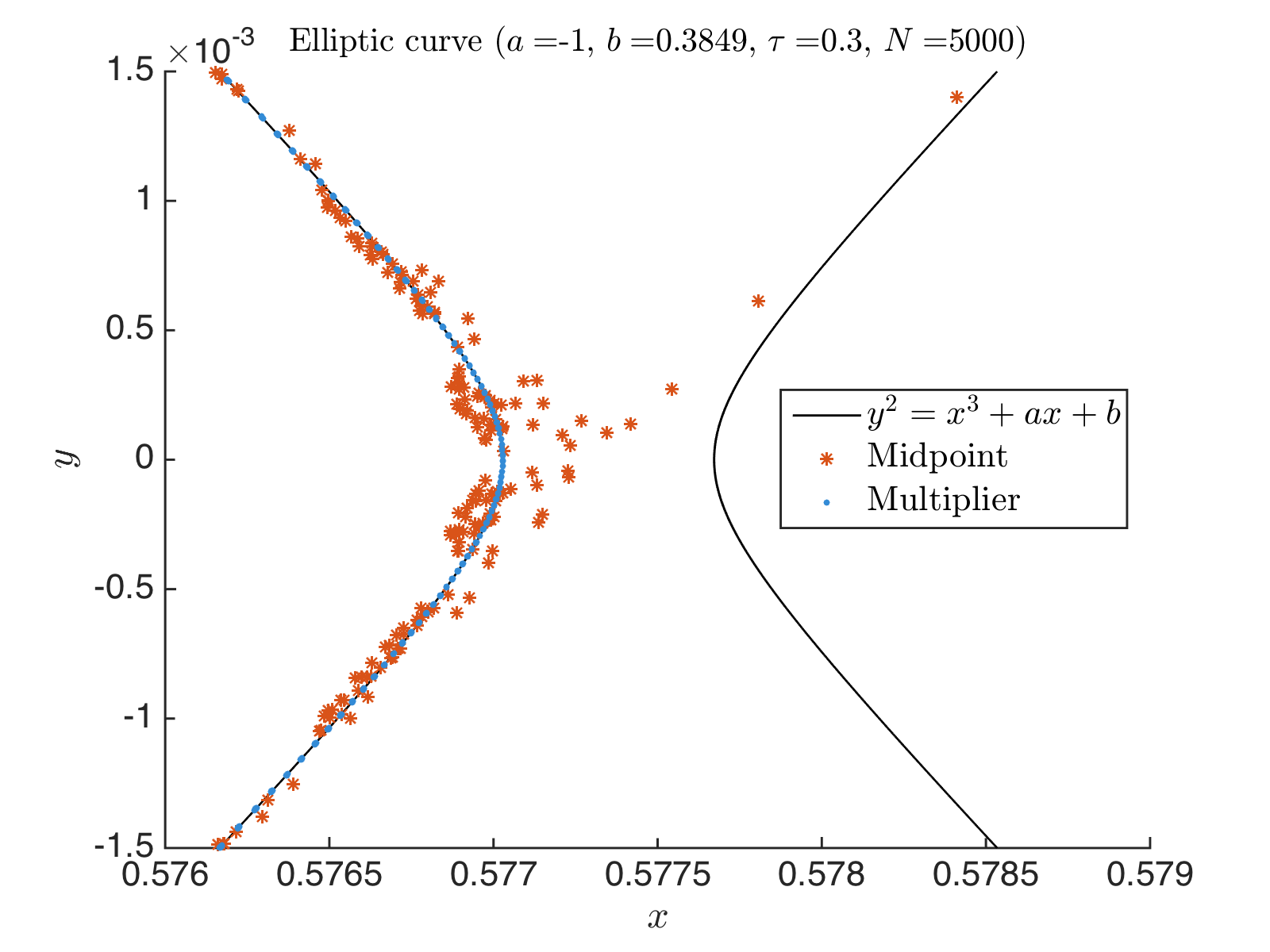}}
	\caption{Comparison between the St\"{o}rmer-Verlet, Midpoint and Multiplier method in Figure \ref{figs:ECzoom1} and close-up of the Midpoint and Multiplier method in Figure \ref{figs:ECzoom2}.}
\end{figure}
Next, we increase the number of time steps to $N=5\times 10^7$ while fixing all other parameters. As shown
in Figures \ref{figs:ECzoom1} and \ref{figs:ECzoom2}, the 1-step multiplier method again gives a solution which stays near the bounded connected component of $X_0$. As we increase the number of time steps, we expect an increase of the error in $\psi$ due to round-off error accumulation and inexact iterative solutions as discussed in Section \ref{sec:CMIP}. Indeed, we observed the error in $\psi$ now to be $\displaystyle \max_{1\leq i \leq 5\times 10^7}|\psi(\bb x_i) - b|\approx 1.1\times 10^{-12}$.

\begin{figure}[tbhp]
\centering
	\subfloat[]{\label{figs:ECMult}\includegraphics[width=0.48\textwidth]{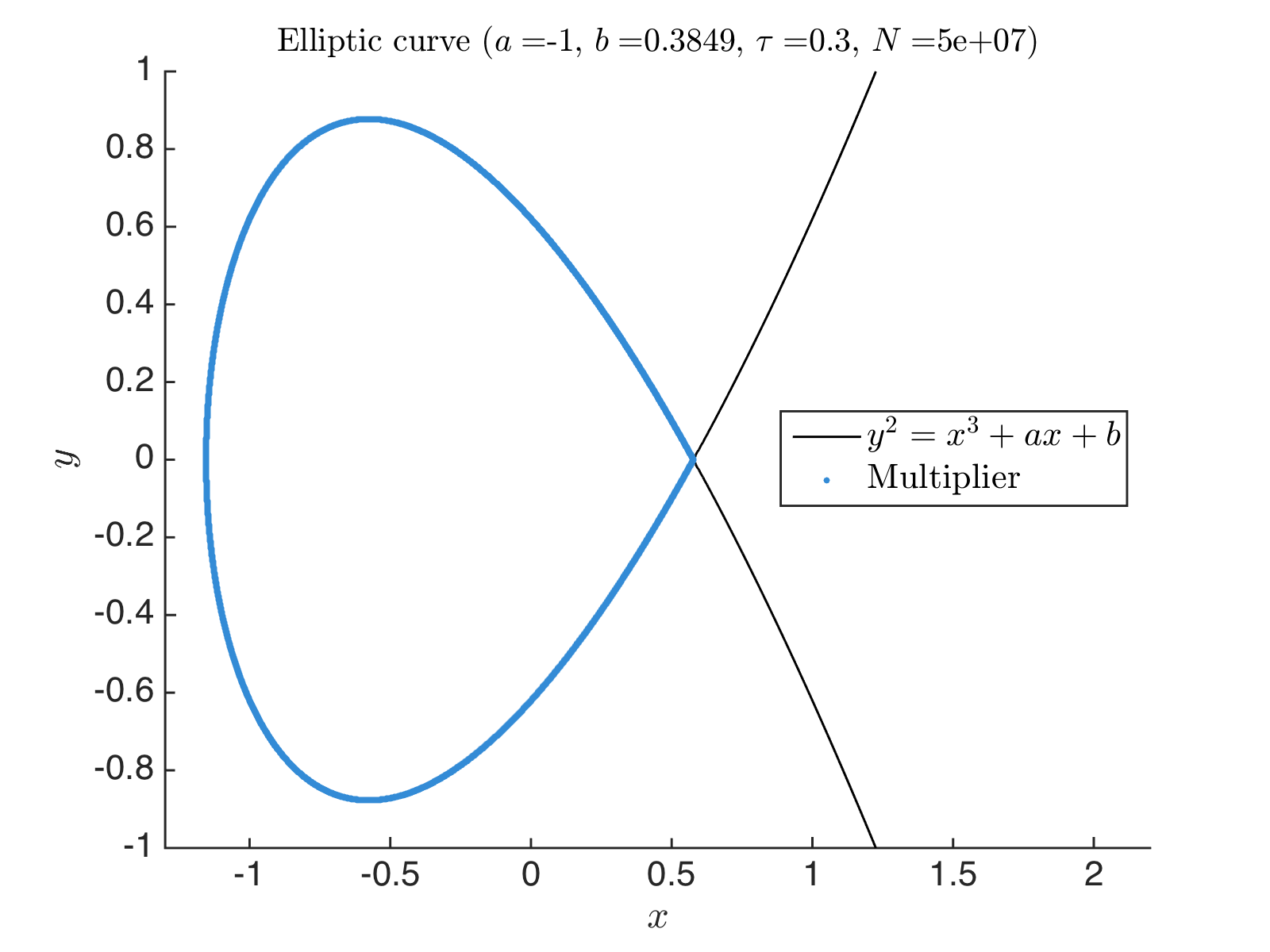}}
	\subfloat[]{\label{figs:ECMultZoom}\includegraphics[width=0.48\textwidth]{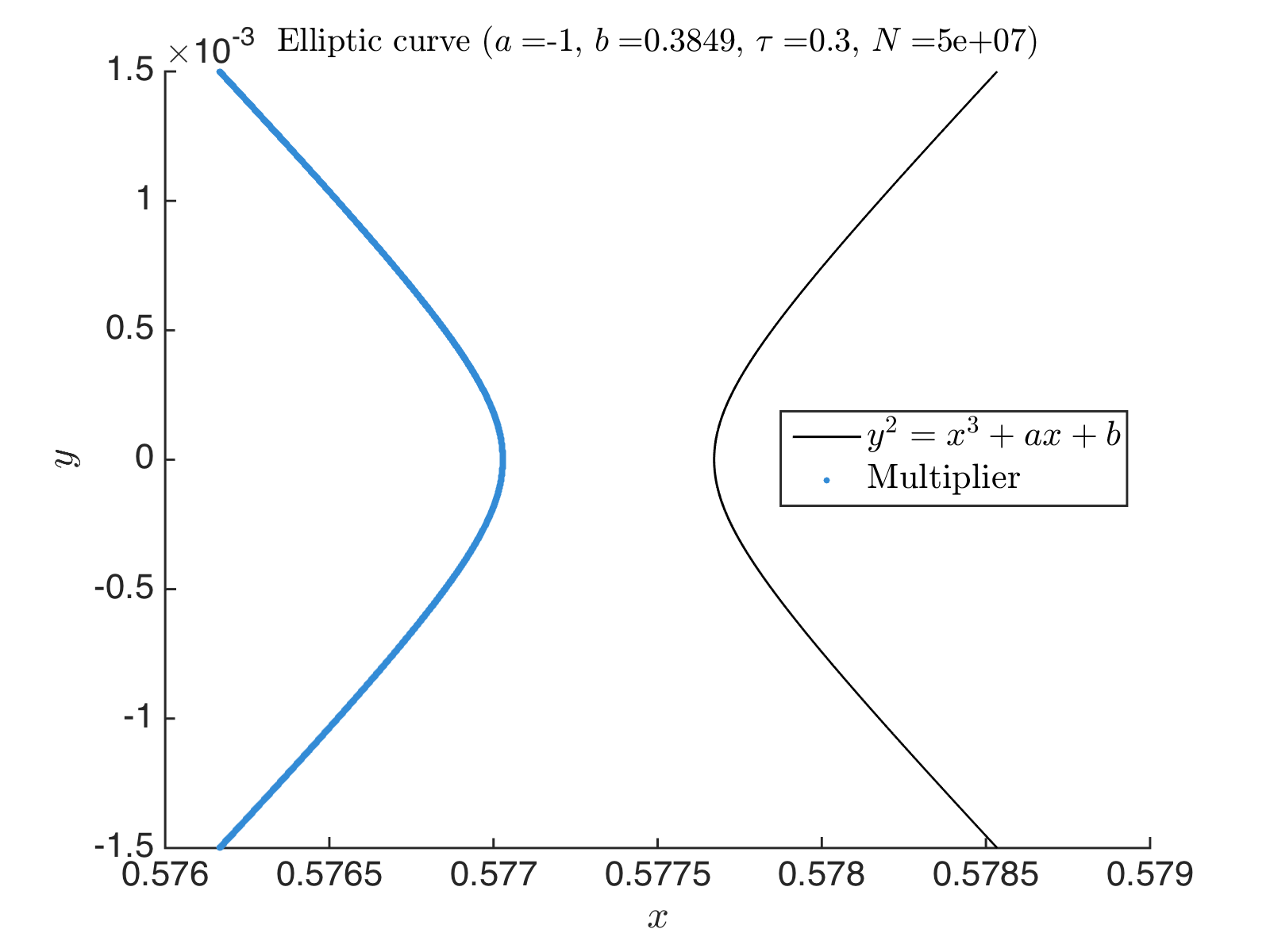}}
	\caption{Phase portraits of the 1-step conservative method for $N=5\times 10^7$.}
\end{figure}
\begin{figure}[tbhp]
\centering
	\label{figs:aEvsN}\includegraphics[width=0.65\textwidth]{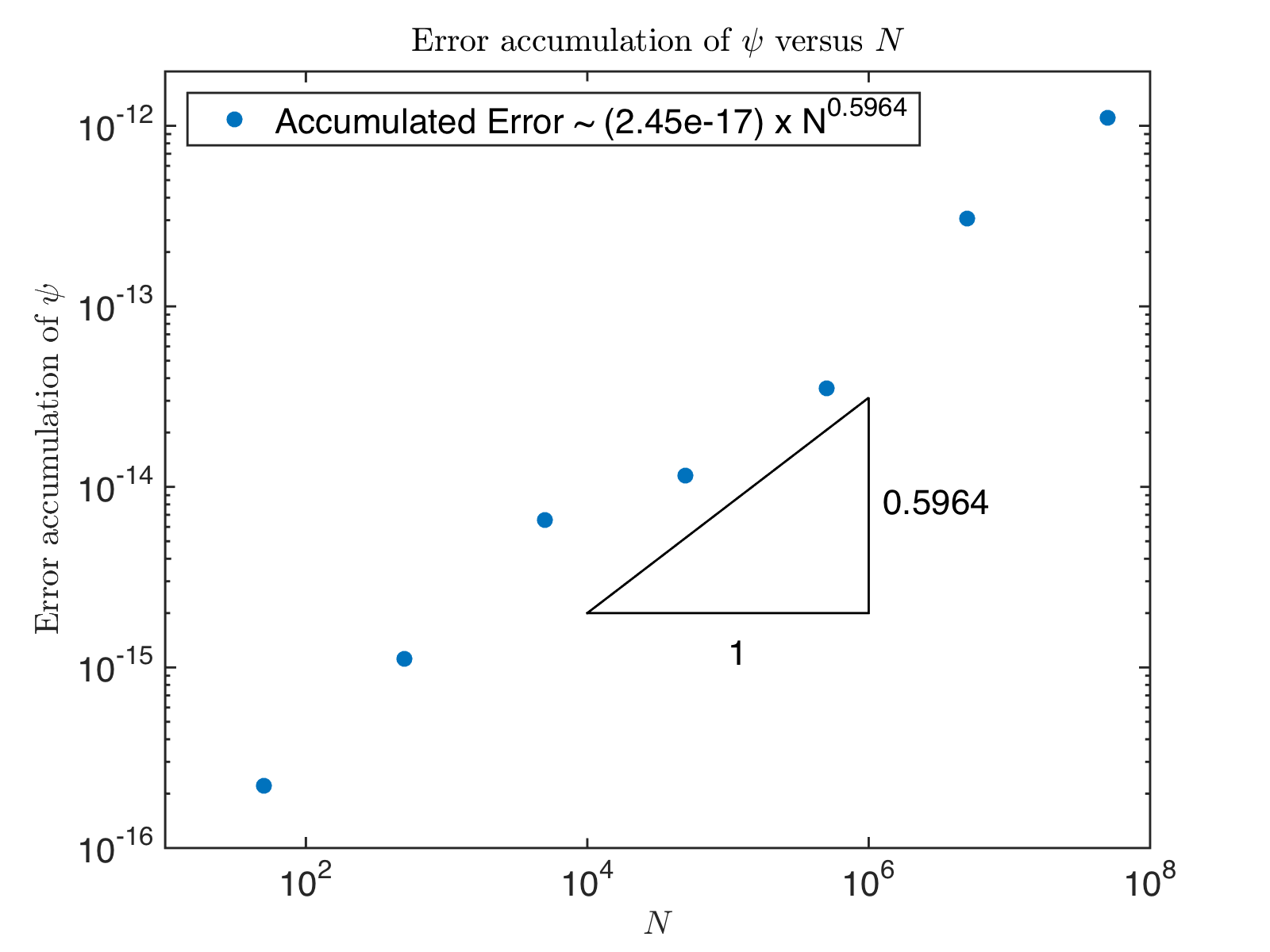}
	\caption{Error accumulation of $\psi$ versus $N$.}
\end{figure}

To investigate further on error accumulation of $\psi$ as $N$ increases, Figure \ref{figs:aEvsN} shows a log-log plot of the accumulated error $\displaystyle \max_{1\leq i \leq N}|\psi(\bb x_i) - b|$ for various $N$. By a linear regression, the accumulated error $E(N)$ was estimated to be $E\approx (2.45\times 10^{-17})\times N^{0.5964}$, where the error accumulation rate of $0.5964$ is due to inherent round-off cancellations within the conservative discretization. 

To estimate the maximum number of time steps $N_{max}$ as stipulated in Theorem \ref{mainTheoremIP}, we need the largest $\epsilon>0$ such that $\psi^{-1}((b-\epsilon, b+\epsilon))$ still has two connected components. In the present case of elliptic curve, we know that the two connected components coalesce into one single component precisely when the discriminant $\Delta(p)$ changes sign. Thus, computing $\Delta(p)=0$ gives $\epsilon \approx 1.8\times 10^{-7}$, which implies a maximum number of time steps $N_{max}=\left(\frac{\epsilon}{4 C_a}\right)^{1/s} \approx 6.9\times 10^{16}$ before the global error can grow in an unbounded fashion. This is in stark contrast to the previous four methods in which their solutions either decay to a fixed point or grow in an unbounded fashion.

\begin{remark}We elected here not to make comparison with projection-based conservative methods; methods which first evolve in time using traditional methods and after some time period project the discrete solution back onto the constraint of conserved quantities. While this approach can make any traditional method conservative, we note the long-term stability result may not hold for these methods if the composition of evolution and projection does not satisfy the UBD property.
\end{remark}

\subsection{Long-term stability of 2-step and 3-step conservative methods}

Next, we compare numerical results of the 2-step and 3-step conservative discretizations of \eqref{ECDisc2} and \eqref{ECDisc3}. 

In the following tests, we used the same initial conditions ($x_0 = 0.571$ and $y_0 \approx 8.33\times 10^{-3}$) as the 1-step conservative method but with an uniform time step size of $\tau = 0.003$ and a total of $N=5\times 10^5$ time steps\footnote{In this case, we observed that the larger step size of $\tau = 0.3$ was not sufficiently small for the long term stability to hold for the 3-step conservative method. For intermediate values of $\tau$, we observed the fixed point iteration may not converge or converge to a different solution (i.e. onto a different connected component) depending on the initial guess or initial condition.}. Furthermore, we employed a standard fixed point iteration to solve the implicit conservative methods with an absolute tolerance of $\delta_{tol}=5\times 10^{-16}$ and a maximum of 100 iterations per time step. The standard 4-th order Runge-Kutta method was used to initialize a guess for the fixed point iteration at each time step.

For a given $\mu$-step conservative method, the errors in the exact and the approximate conserved quantity $\psi, \psi^\tau$ are defined as
\[
\text{Error}[\psi] &:= \max_{k=\mu, \dots, N} |\psi(\bb x_k)-\psi(\bb x_0)|, \\
\text{Error}[\psi^\tau ] &:= \max_{k=\mu, \dots, N} |\psi^\tau(\bb x_k,\dots,\bb x_{k-\mu+1})-\psi(\bb x_0)|.
\]
From Table \ref{tab:compBootStrap}, we see that there are negligible differences for the errors in $\psi$ and $\psi^\tau$. This is to be expected as $\psi^\tau$ is a consistent approximation of $\psi$ for small $\tau$. Thus, for the remaining of this section, we will only list the errors in $\psi^\tau$.

\begin{table}[H]
\label{tab:compBootStrap}
\noindent \begin{centering}
\begin{tabular}{|c|c|c|}
\hline 
Bootstrap routine & 1-step method of \eqref{ECDisc1} & 4-th order Runge-Kutta method
\tabularnewline
\hline 
\hline 
Error$[\psi]$ of \eqref{ECDisc2}& $1.89\times 10^{-14}$& $4.29\times 10^{-14}$\tabularnewline
\hline
Error$[\psi^\tau]$ of \eqref{ECDisc2}&  $1.89\times 10^{-14}$& $4.29\times 10^{-14}$\tabularnewline
\hline 
Error$[\psi]$ of \eqref{ECDisc3}& $3.229\times 10^{-13}$& $1.186\times 10^{-13}$\tabularnewline
\hline
Error$[\psi^\tau]$ of \eqref{ECDisc3}& $3.226\times 10^{-13}$& $1.186\times 10^{-13}$\tabularnewline
\hline 
\end{tabular}
\par\end{centering}
\caption{Comparison of errors in $\psi$ and $\psi^\tau$ for the 2-step conservative method \eqref{ECDisc2} and 3-step conservative method \eqref{ECDisc3} with different bootstrapping routines.}\vskip -5mm
\end{table}

Next we verify the long term stability for the nonlinear multistep methods and bootstrapping routines, i.e. initializing the first $\mu$ values. Figures \ref{figs:2stepCMRKaE}-\ref{figs:3stepRKRKaE} show the log-log plot of the error accumulation of $\psi^\tau$ up to $N=5\times 10^7$ time steps for the 2-step and 3-step conservative method using different bootstrapping routines. Figures \ref{figs:ECMultiStep} and \ref{figs:ECMultiStepZoom} show the phase portraits for the 3-step conservative method. We omit the phase portrait for the 2-step method as it is visually indistinguishable from the 3-step method.

\begin{figure}[tbhp]
\centering
	\subfloat[2-step conservative method \eqref{ECDisc2} bootstrapped by 1-step conservative method \eqref{ECDisc1} ]{\label{figs:2stepCMRKaE}\includegraphics[width=0.48\textwidth]{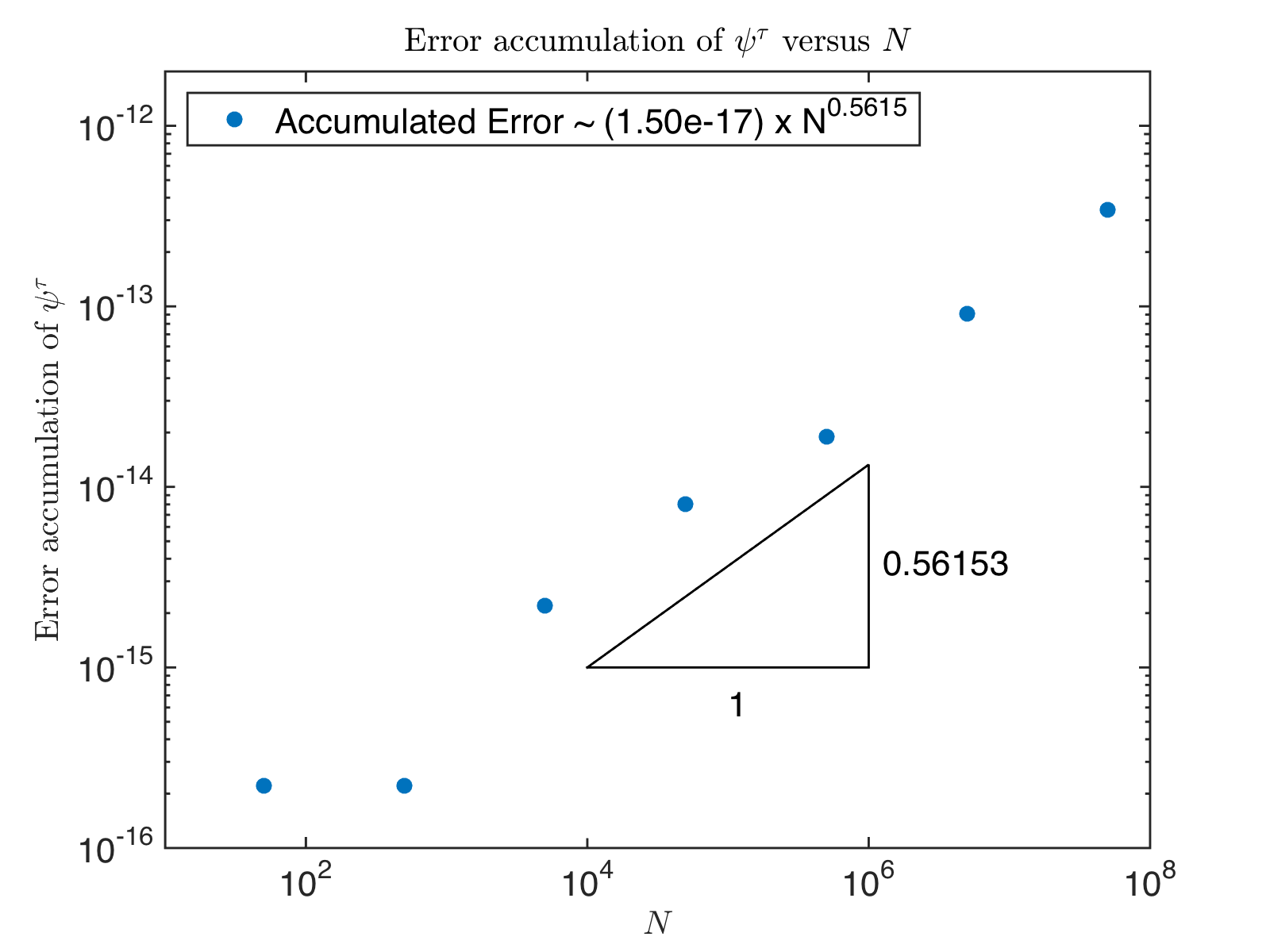}} \quad
	\subfloat[2-step conservative method \eqref{ECDisc2} bootstrapped by RK4 method]{\label{figs:2stepRKRKaE}\includegraphics[width=0.48\textwidth]{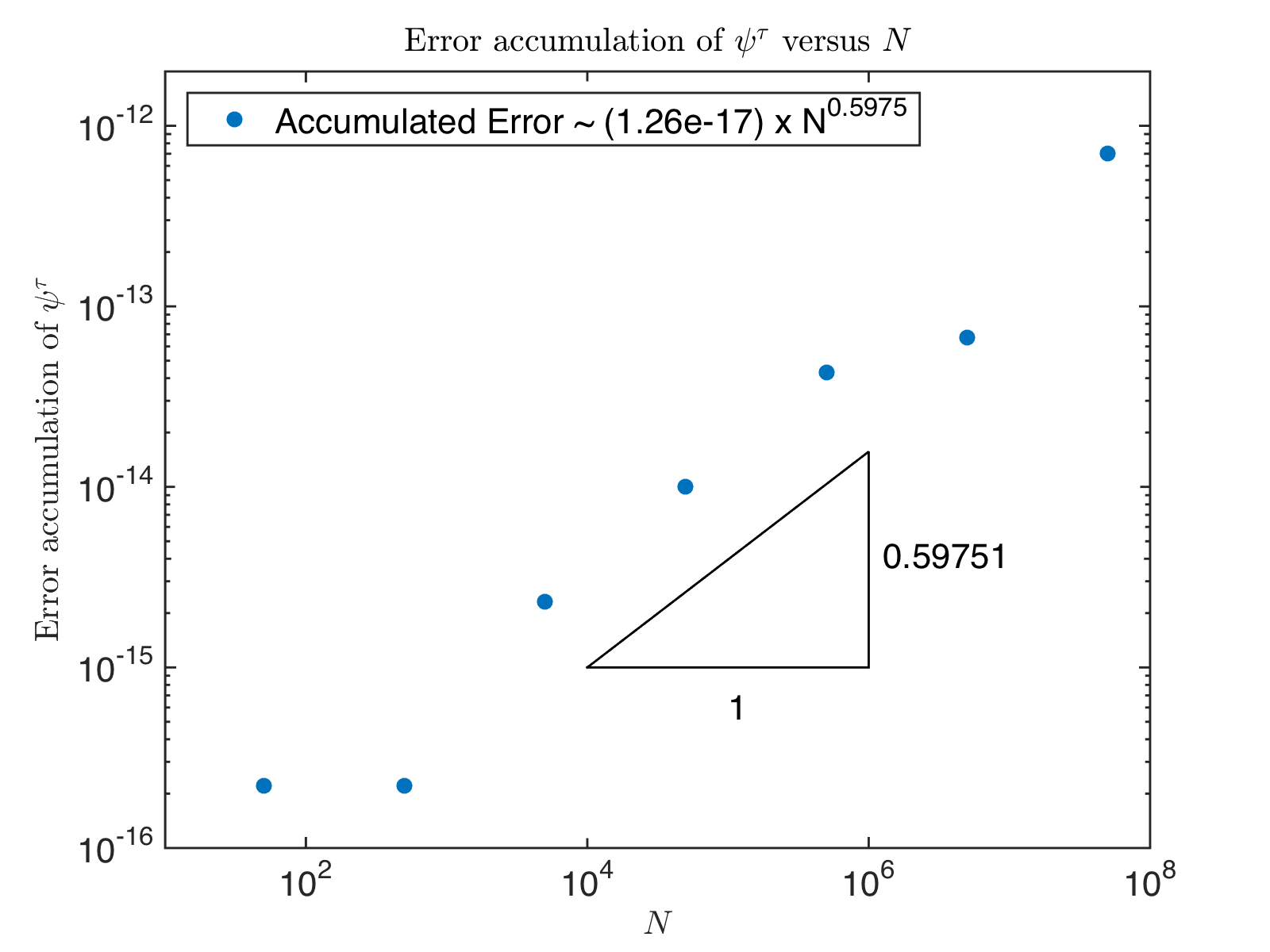}}\\
	\subfloat[3-step conservative method \eqref{ECDisc3} bootstrapped by 1-step conservative method  \eqref{ECDisc1}]{\label{figs:3stepCMRKaE}\includegraphics[width=0.48\textwidth]{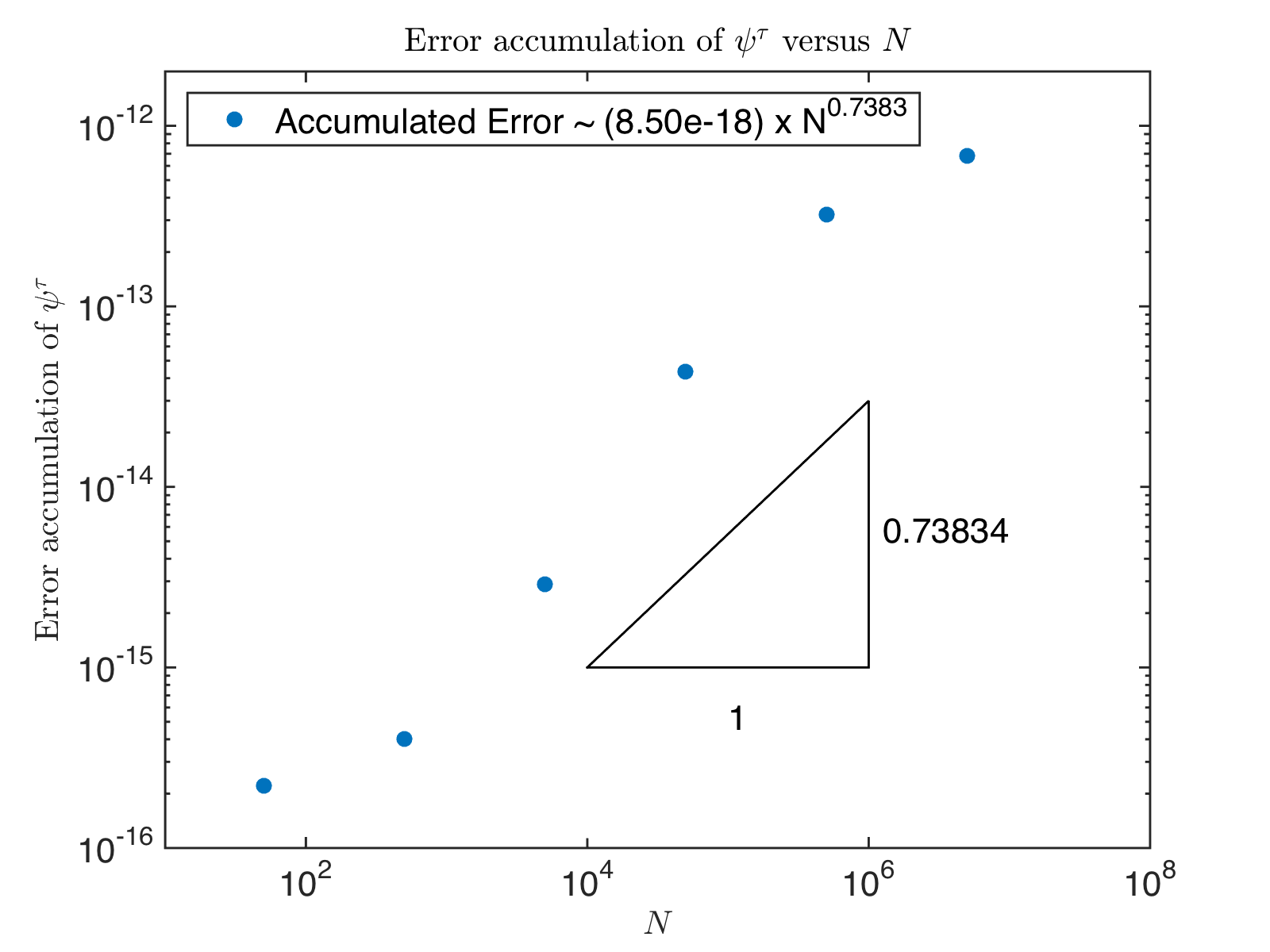}} \quad
	\subfloat[3-step conservative method \eqref{ECDisc3} bootstrapped by RK4 method]{\label{figs:3stepRKRKaE}\includegraphics[width=0.48\textwidth]{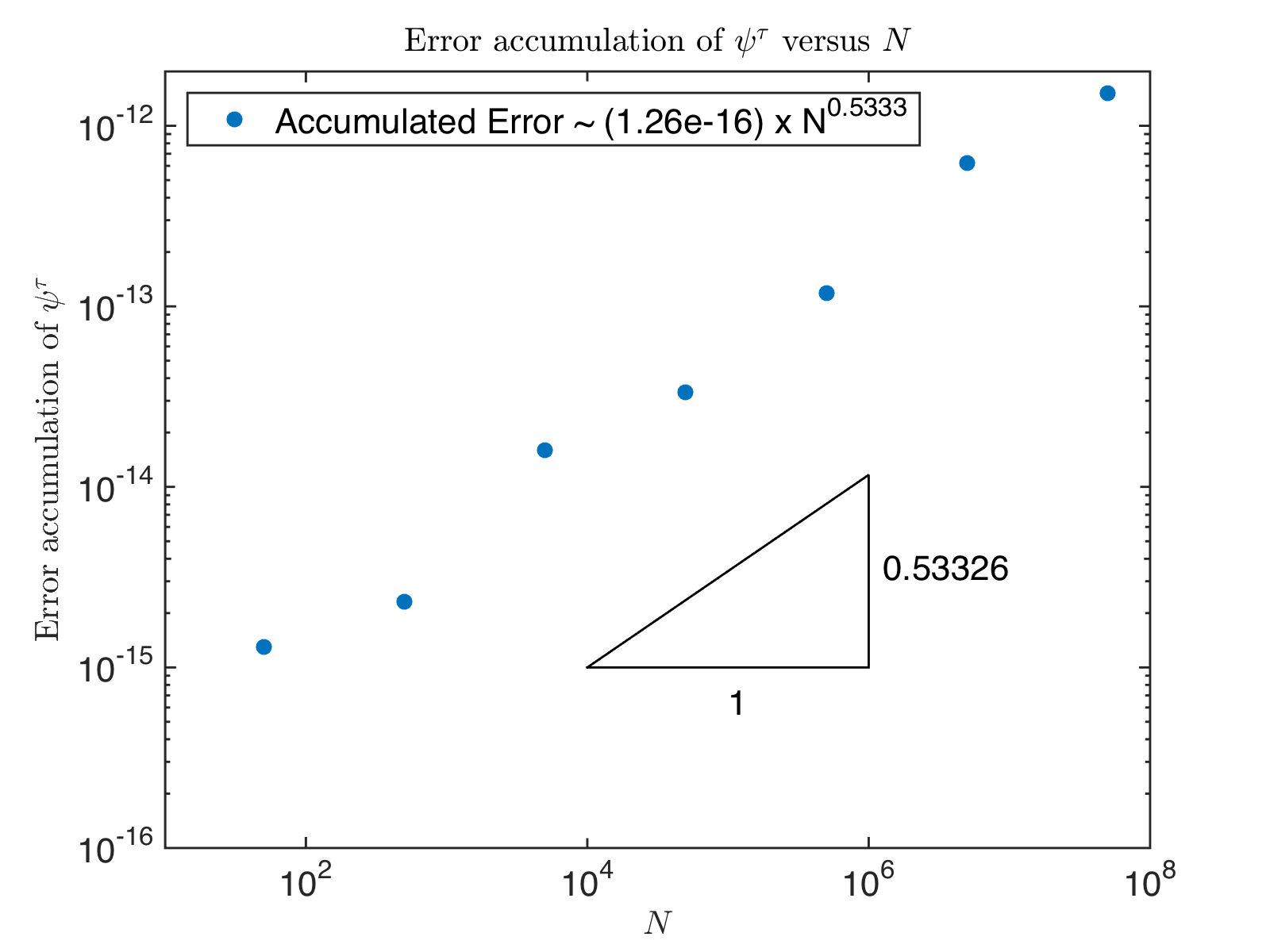}}
	\caption{Comparison error accumulation of the multistep conservative methods.}
\end{figure}
\begin{figure}[tbhp]
\centering
	\subfloat[]{\label{figs:ECMultiStep}\includegraphics[width=0.48\textwidth]{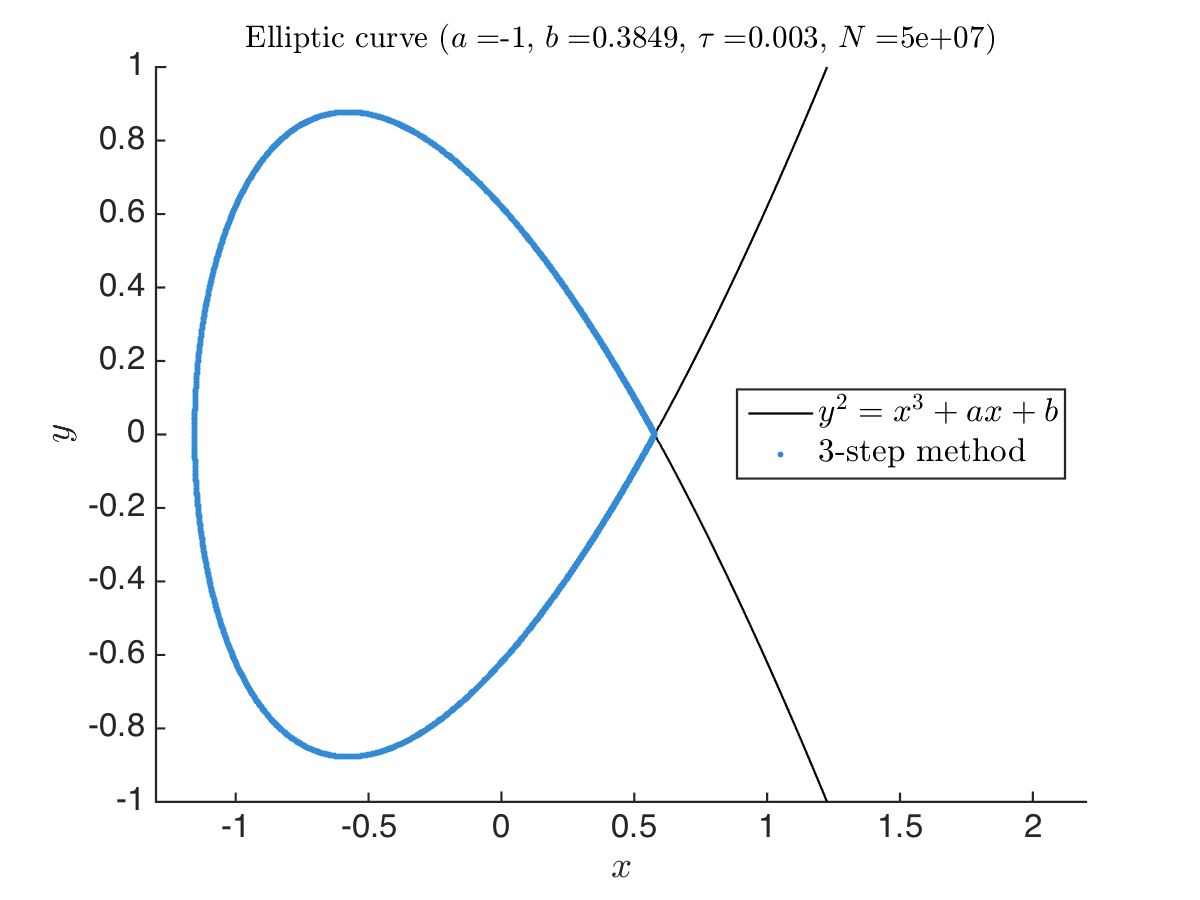}}
	\subfloat[]{\label{figs:ECMultiStepZoom}\includegraphics[width=0.48\textwidth]{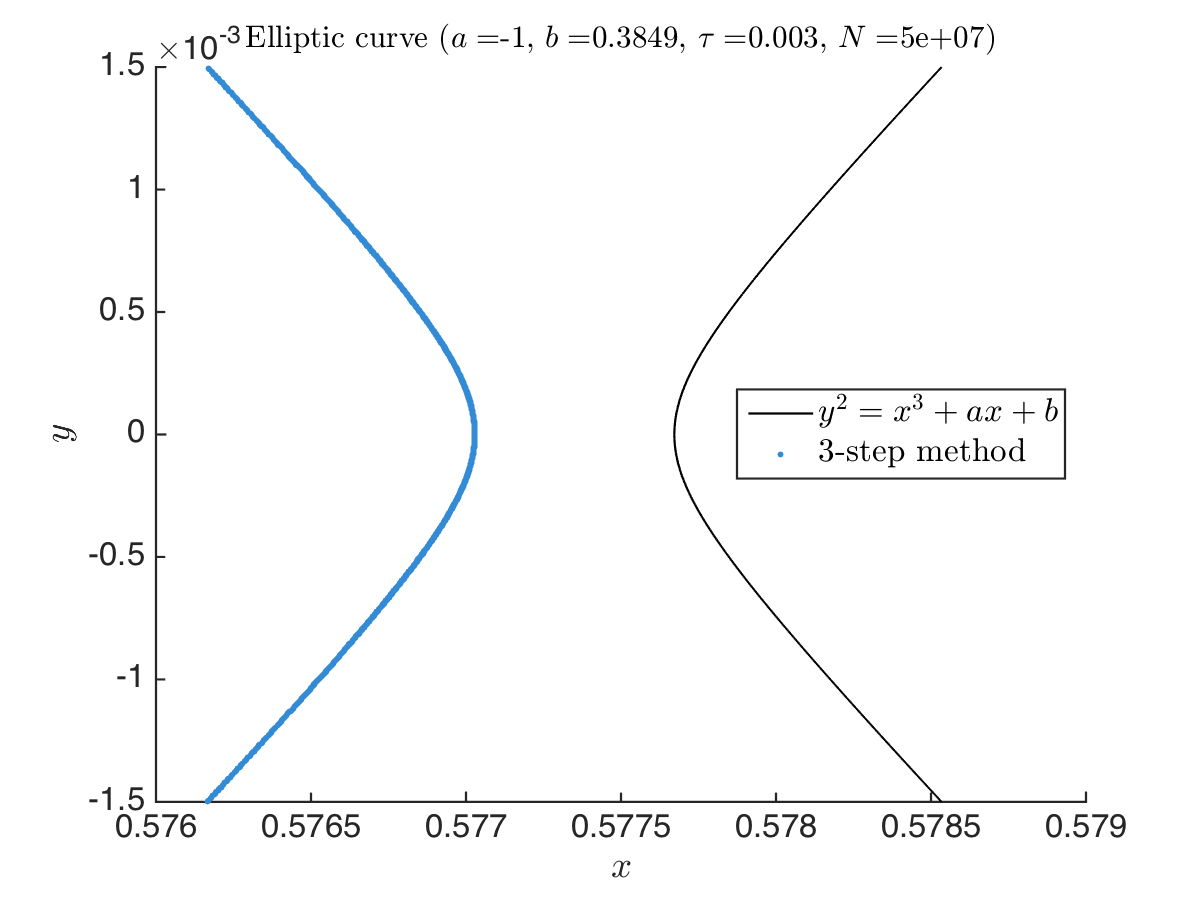}}
	\caption{Phase portraits of the 3-step conservative method for $N=5\times 10^7$.}
\end{figure}

Figures \ref{figs:eAccCM_RK} and \ref{figs:eAccRK_RK} show the differences in the error accumulation of $\psi^\tau$ on a linear scale for the 3-step conservative method. In particular, this demonstrates that bootstrapping a multistep conservative method with a higher order nonconservative method can still be beneficial in preserving conserved quantities over long term. 

\begin{figure}[tbhp]
\centering
	\subfloat[3-step conservative method \eqref{ECDisc3} bootstrapped by 1-step conservative method \eqref{ECDisc1}]{\label{figs:eAccCM_RK}\includegraphics[width=0.48\textwidth]{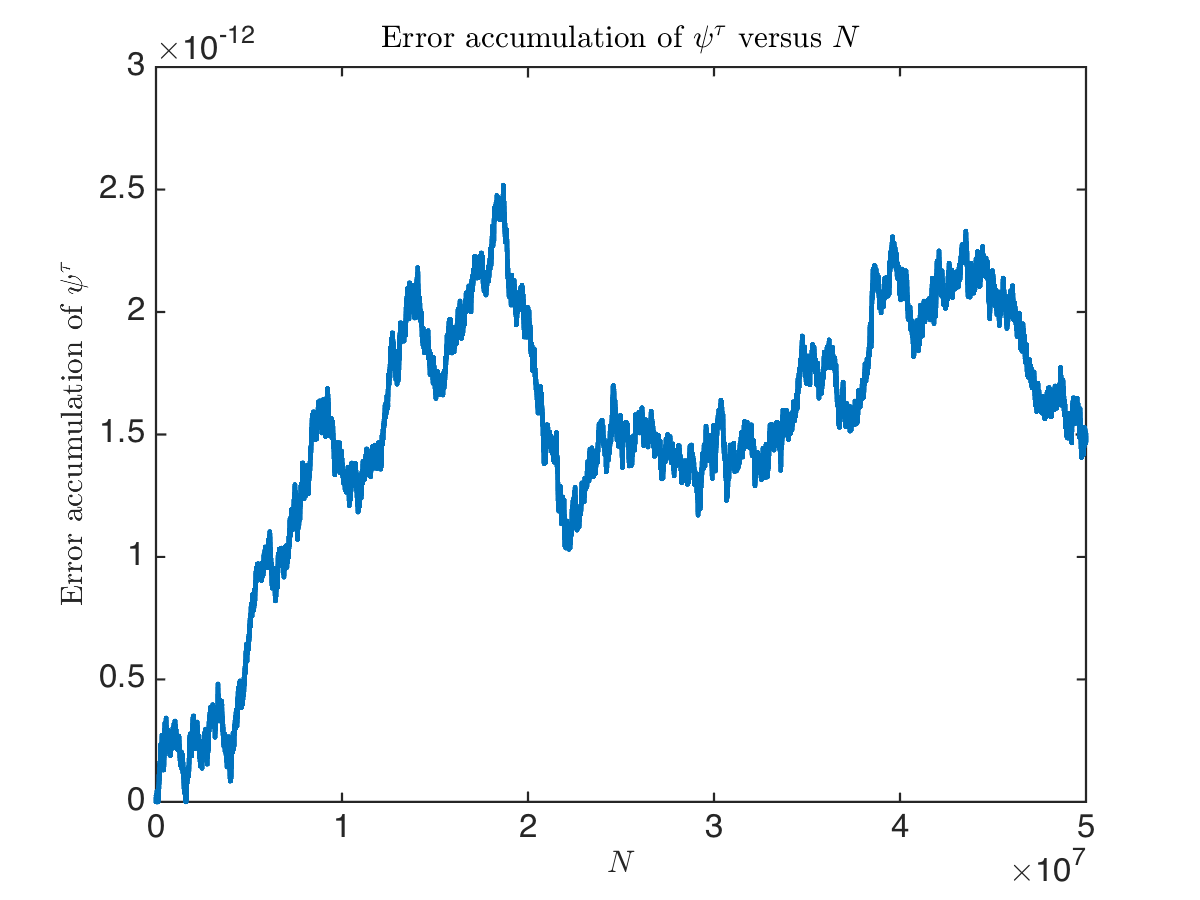}} \quad
	\subfloat[3-step conservative method \eqref{ECDisc3} bootstrapped by RK4 method]{\label{figs:eAccRK_RK}\includegraphics[width=0.48\textwidth]{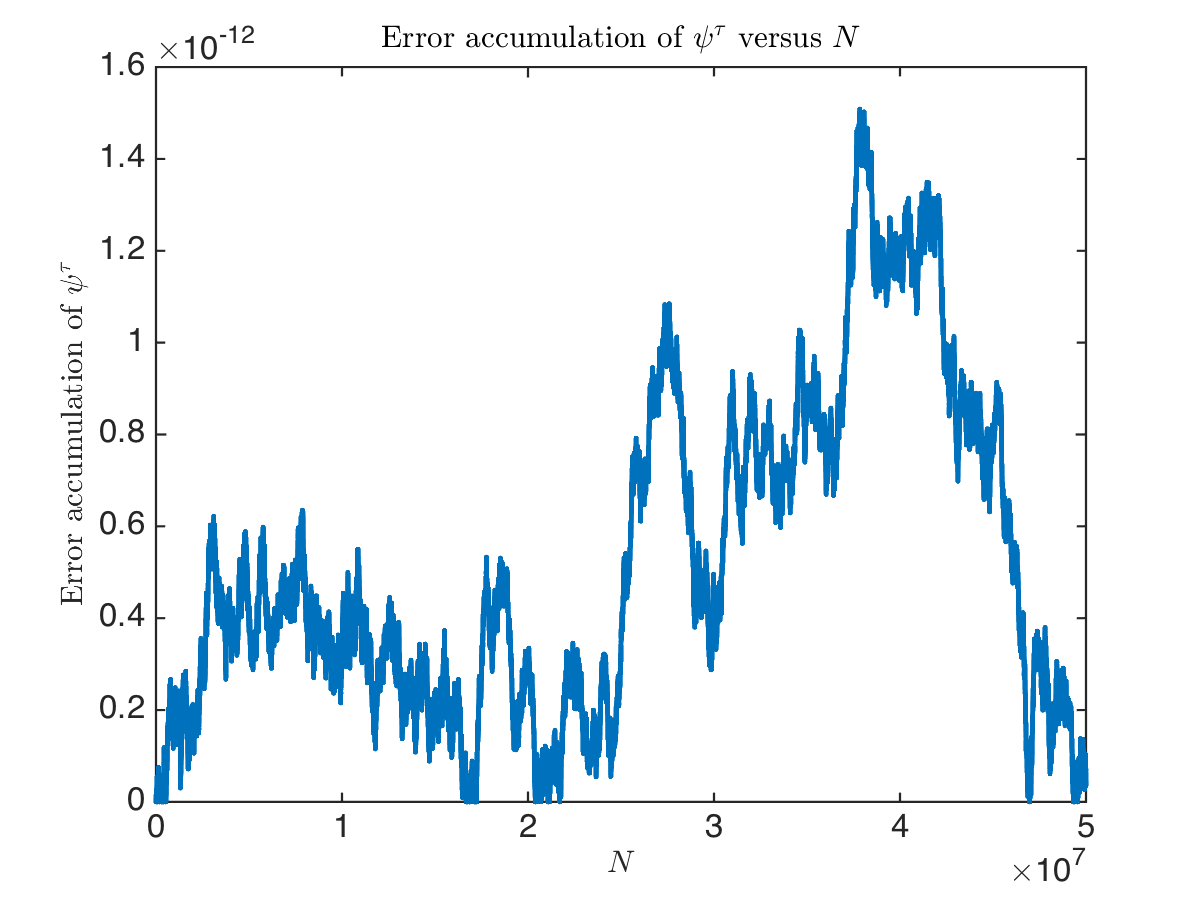}}
	\caption{Error accumulation of the 3-step conservative method with different bootstrapping routines.}
\end{figure}

Similarly, Figures \ref{figs:eAccCM_RK2} and \ref{figs:eAccRK_RK2} illustrates the long term stability holds for the 3-step method on a different set of initial conditions ($x_0 = -0.5$ and $y_0 \approx -0.8717$).

\begin{figure}[tbhp]
\centering
	\subfloat[3-step conservative method \eqref{ECDisc3} bootstrapped by 1-step conservative method \eqref{ECDisc1}]{\label{figs:eAccCM_RK2}\includegraphics[width=0.48\textwidth]{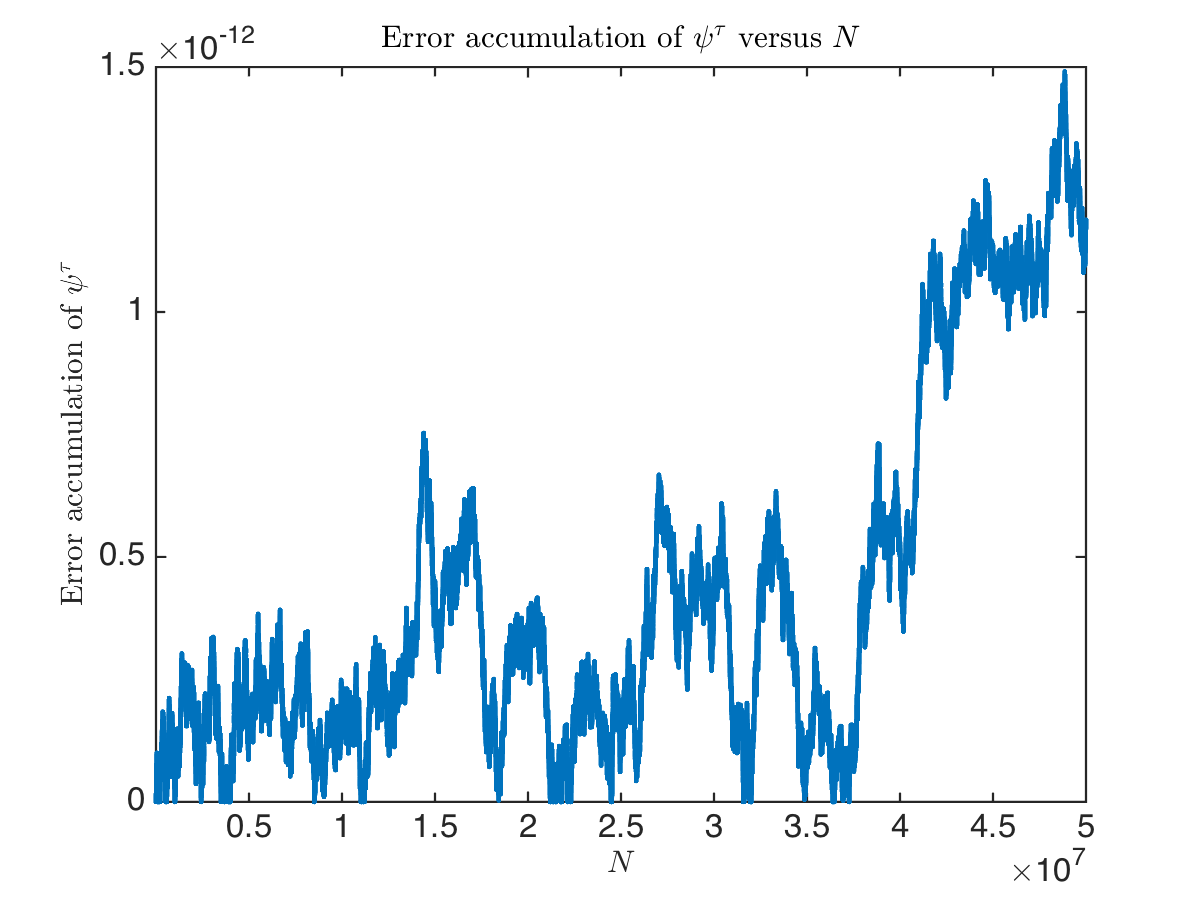}} \quad
	\subfloat[3-step conservative method \eqref{ECDisc3} bootstrapped by RK4 method]{\label{figs:eAccRK_RK2}\includegraphics[width=0.48\textwidth]{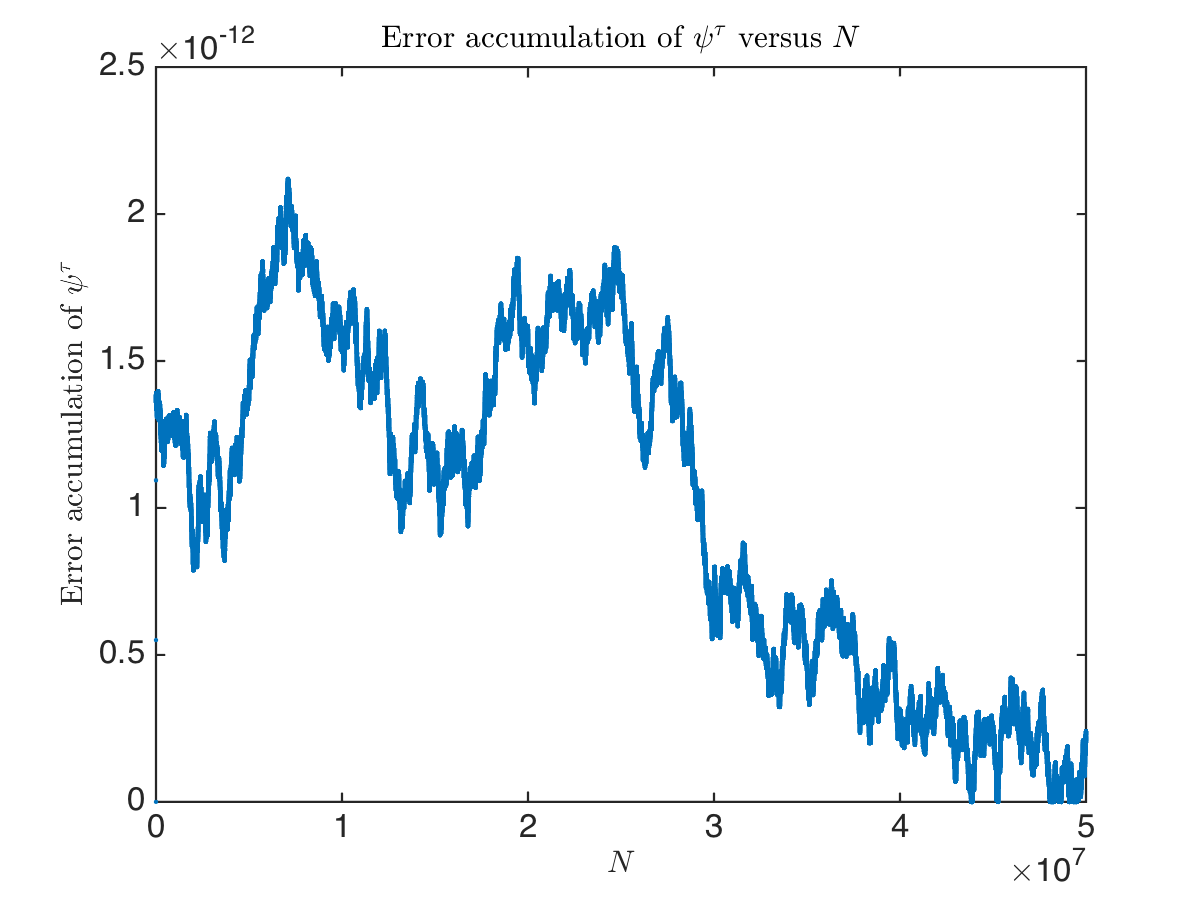}}
	\caption{Error accumulation of the 3-step conservative method with different initial conditions.}
\end{figure}

\section{Conclusion} In this paper, we have presented a long-term stability result for conservative methods with uniformly bounded displacements in the case of autonomous ODEs; specifically the global error is, in principle, bounded for all time. On finite precision machines, the global error is shown to be bounded up to some arbitrarily long time depending only on machine precision and tolerance. Since the main result is mostly based on topological ideas, we believe the stability result can be generalized to certain non-autonomous ODEs and PDEs.

\section*{Acknowledgments}
ATSW would like to thank Siddarth Sankaran for our discussions on algebraic geometry related to this work and Chris Budd for pointing out the connection of the multiplier method with the average vector field method. We thank Alexander Bihlo and Ernst Hairer for the valuable discussions on this work at the Banff International Research Station workshop in June of 2017. We also thank the anonymous reviewers for their helpful comments and suggestions for improving this paper.
\bibliographystyle{siamplain}
\bibliography{refs}

\begin{thebibliography}{10}

\bibitem{ArnFalWin06}
{\sc D.~N. Arnold, R.~S. Falk, and R.~Winther}, {\em {Finite element exterior
  calculus, homological techniques, and applications}}, Acta Numerica,  (2006),
  pp.~1--155.

\bibitem{BenGio94}
{\sc G.~Benettin and A.~Giorgilli}, {\em {On the Hamiltonian interpolation of
  near to the identity symplectic mappings with application to symplectic
  integration algorithms}}, J. Statist. Phys, 74 (1994), pp.~1117--1143.

\bibitem{boch06}
{\sc P.~B. Bochev and J.~M. Hyman}, {\em {Principles of mimetic discretizations
  of differential operators}}, in Compatible spatial discretizations, Springer,
  2006, pp.~89--119.

\bibitem{Bro37}
{\sc D.~Brouwer}, {\em {On the accumulation of errors in numerical
  integration.}}, Astronomical Journal, 46 (1937), pp.~149--153.

\bibitem{BruIavTri15}
{\sc L.~Brugnano, F.~Iavernaro, and D.~Trigiante}, {\em {Analysis of
  Hamiltonian Boundary Value Methods (HBVMs): A class of energy-preserving
  Runge-Kutta methods for the numerical solution of polynomial Hamiltonian
  systems.}}, Commun Nonlinear Sci Numer Simulat, 20 (2015), pp.~650--667.

\bibitem{CalHai95}
{\sc M.~Calvo and E.~Hairer}, {\em {Accurate long-term integration of dynamical
  systems}}, Appl. Numer. Math., 18 (1995), pp.~95--105.

\bibitem{CalSan93}
{\sc M.~Calvo and J.~Sanz-Serna}, {\em {The development of variable-step
  symplectic integrators, with application to the two-body problem}}, SIAM J.
  Sci. Comput., 14 (1993), pp.~936--952.

\bibitem{CelAE12}
{\sc E.~Celledoni, V.~Grimm, R.~McLachlan, D.~McLaren, D.~O'Neale, B.~Owren,
  and G.~R.~W. Quispel.}, {\em {Preserving energy resp. dissipation in
  numerical PDEs using the average vector field method}}, Journal of
  Computational Physics, 231 (2012), pp.~6770--6789.

\bibitem{Cel09}
{\sc E.~Celledoni, R.~I. McLachlan, D.~I. McLaren, B.~Owren, G.~R.~W. Quispel,
  and W.~M. Wright}, {\em {Energy-preserving Runge-Kutta Methods}}, ESAIM:
  M2AN, 43 (2009), pp.~645--649.

\bibitem{ChrMunOwr11}
{\sc S.~H. Christiansen, H.~Munthe-Kaas, and B.~Owren}, {\em {Topics in
  structure-preserving discretization}}, Acta Numerica, 20 (2011), pp.~1--119.

\bibitem{coo87}
{\sc G.~J. Cooper}, {\em {Stability of Runge-Kutta Methods for Trajectory
  Problems}}, IMA J Numer. Anal., 7 (1987), pp.~1--13.

\bibitem{CouFriLew67}
{\sc R.~Courant, K.~Friedrichs, and H.~Lewy}, {\em {On the Partial Difference
  Equations of Mathematical Physics}}, IBM Journal of Research and Development,
  11 (1967), p.~215.

\bibitem{DahOwr11}
{\sc M.~Dahlby and B.~Owren}, {\em {A General Framework for Deriving Integral
  Preserving Numerical Methods for PDEs}}, SIAM J. Sci. Comput., 33 (2011),
  pp.~2318--2340.

\bibitem{DahOwrYag11}
{\sc M.~Dahlby, B.~Owren, and T.~Yaguchi}, {\em {Preserving multiple first
  integrals by discrete gradients}}, J. Phys. A: Math. Theor., 44 (2011).

\bibitem{KanZai95}
{\sc K.~F. and Z.~jiu S.}, {\em {Volume-preserving algorithms for source-free
  dynamical systems }}, Numerische Mathematik, 71 (1995), pp.~451--463.

\bibitem{FurMat10}
{\sc D.~Furihata and T.~Matsuo}, {\em {Discrete Variational Derivative Method:
  A Structure-Preserving Numerical Method for Partial Differential Equations}},
  CRC Press, 2010.

\bibitem{Gon96}
{\sc O.~Gonzalez}, {\em {Time integration and discrete Hamiltonian systems}},
  J. Nonlinear Science, 6 (1996), pp.~449--467.

\bibitem{GraAE04}
{\sc K.~R. Grazier, W.~I. Newman, J.~M. Hyman, P.~W. Sharp, and D.~J.
  Goldstein}, {\em {Achieving Brouwer's law with high-order St\"ormer multistep
  methods}}, ANZIAM J., 46 (2004), pp.~C786--C804.

\bibitem{HaiLubWan06}
{\sc E.~Hairer, C.~Lubich, and G.~Wanner}, {\em {Geometric numerical
  integration: structure-preserving algorithms for ordinary differential
  equations}}, Springer, Berlin, 2006.

\bibitem{HaiMclRaz08}
{\sc E.~Hairer, R.~I. McLachlan, and A.~Razakarivony}, {\em {Achieving
  Brouwer's law with implicit Runge-Kutta methods}}, BIT Numerical Mathematics,
  48 (2008), pp.~231--243.

\bibitem{HaiNorWan00}
{\sc E.~Hairer, S.~P. N{\o}rsett, and G.~Wanner}, {\em Solving Ordinary
  Differential Equations I}, Springer-Verlag Berlin, 2~ed., 1993.

\bibitem{Hir03}
{\sc A.~N. Hirani}, {\em Discrete exterior calculus}, PhD thesis, {C}alifornia
  {I}nstitute of {T}echnology, 2003.

\bibitem{LaBGre75}
{\sc R.~A. LaBudde and D.~Greenspan}, {\em {Energy and momentum conserving
  methods of arbitrary order for the numerical integration of equations of
  motion}}, Numerische Mathematik, 25 (1975), pp.~323--346.

\bibitem{leim04}
{\sc B.~Leimkuhler and S.~Reich}, {\em {Simulating {H}amiltonian dynamics}},
  Cambridge University Press, Cambridge, 2004.

\bibitem{LiVu95}
{\sc S.~Li and L.~Vu-Quoc}, {\em {Finite difference calculus invariant
  structure of a class of algorithms for the nonlinear Klein-Gordon equation}},
  SIAM J. Numer. Anal., 32 (1995), pp.~1839--1875.

\bibitem{MarWes01}
{\sc J.~E. Marsden and M.~West}, {\em {Discrete Mechanics and Variational
  Integrators}}, Acta Numerica,  (2001), pp.~1--158.

\bibitem{MclQui04}
{\sc R.~I. McLachlan and G.~R.~W. Quispel}, {\em {Integral-preserving
  integrators}}, J. Phys. A: Math. Gen., 37 (2004), pp.~L489--L495.

\bibitem{McL99}
{\sc R.~I. McLachlan, G.~R.~W. Quispel, and N.~Robidoux}, {\em {Geometric
  integration using discrete gradients}}, Phil. Trans. R. Soc. Lond., 357
  (1999), pp.~1021--1045.

\bibitem{Mun00}
{\sc J.~R. Munkres}, {\em Topology}, Prentice Hall, 2~ed., 2000.

\bibitem{Olv01}
{\sc P.~J. Olver}, {\em Geometric foundations of numerical algorithms and
  symmetry}, Appl. Alg. Engin. Comp. Commun., 11 (2001), pp.~417--436.

\bibitem{QuaSacSal07}
{\sc A.~Quarteroni, R.~Sacco, and F.~Saleri}, {\em Numerical Mathematics},
  Springer Berlin Heidelberg, 2~ed., 2007.

\bibitem{Qui94}
{\sc G.~D. Quinlan}, {\em {Round-off error in long-term orbital integrations
  using multistep methods}}, Celestial Mech. Dynam. Astronom., 58 (1994),
  pp.~339--351.

\bibitem{QuiMcL08}
{\sc G.~R.~W. Quispel and D.~I. McLaren.}, {\em {A new class of
  energy-preserving numerical integration methods}}, J. Phys. A: Math. Theor.,
  41 (2008).

\bibitem{Qui97}
{\sc G.~R.~W. Quispel and G.~S. Turner}, {\em {Discrete gradient methods for
  solving ODEs numerically while preserving a first integral}}, J. Phys. A:
  Math. Gen., 29 (1996), pp.~L341--L349.

\bibitem{Sha86}
{\sc L.~Shampine}, {\em {Conservation laws and the numerical solution of
  ODEs}}, Comput. Math. Appl., 12B (1986), pp.~1287--1296.

\bibitem{SimTarWon92}
{\sc J.~Simo, N.~Tarnow, and K.~Wong}, {\em Exact energy-momentum conserving
  algorithms and symplectic schemes for nonlinear dynamics}, Computer Methods
  in Applied Mechanics and Engineering, 100 (1992), pp.~63--116.

\bibitem{Wan18}
{\sc A.~T.~S. Wan}, {\em {Higher order conservative methods for dynamical
  systems}}, In preparation.

\bibitem{WanBihNav17a}
{\sc A.~T.~S. Wan, A.~Bihlo, and J.-C. Nave}, {\em {Conservative methods for
  dynamical systems}}, SIAM J. Numer. Anal., 55 (2017), pp.~2255--2285.

\end{thebibliography}

\appendix

\section{Establishing the uniformly bounded displacement property}
In this appendix, we establish the uniformly bounded displacement (UBD) property of Definition \ref{def:LCP} for the 1-step conservative method \eqref{ECDisc1}. As before, we will use $\norm{\cdot}$ to denote the Euclidean norm.

Let $K\subset \mathbb{R}^2$ be a compact subset and $r>0$. To show the 1-step method \eqref{ECDisc1} has uniformly bounded displacements, we need to show that there is a $\tau_c>0$ (depending only on $K$ and $r$) such that if $\tau<\tau_c$ and $\bb x_k \in K$ for each $k\geq 0$, then there is a unique $\bb x_{k+1}$ satisfying $\norm{\bb x_{k+1}-\bb x_k}\leq r$. 

First, we show that there is a unique solution $\bb x_{k+1}$ to \eqref{ECDisc1} in a neighborhood of $\bb x_k$ for sufficiently small $\tau$. By hypothesis of the UBD property, we can assume $\bb x_k\in K$. Then for some $\tau^*$ to be determined, define the map
$T:\overline{B_r(\bb x_k)} \times K \times [0,\tau^*]\rightarrow \mathbb{R}^2$ for any $r>0$ and $a\in \mathbb{R}$ given by,
\begin{equation}
T(\bb x, \bb x_k, \tau) := 
\begin{pmatrix}x_k+\tau(y+y_k) \\ y_k+\tau(x^2+x x_k+x_k^2+a) \end{pmatrix}, \text{ for }\bb x =\begin{pmatrix} x \\ y\end{pmatrix} \text{ and } \tau \in [0,\tau^*]. \label{eq:cMap}
\end{equation}
So to show \eqref{ECDisc1} has a unique solution $\bb x_{k+1}$, it suffices to show that \eqref{eq:cMap} has a unique fixed point $\bb x^*:=\bb x_{k+1}$ by showing $T$ is a contractive map for fixed $\tau$ and $\bb x_k$.
\begin{claim}
There exists a $\tau^*>0$ (depending only on $K$ and $r$) so that $T$ is a contractive map for any fixed $\tau<\tau^*$ and $\bb x_k\in K$.
\end{claim}
\begin{proof}
This follows from standard Banach fixed point type argument. Noting $\bb x_k\in K$ and $\bb x \in \overline{B_r(\bb x_k)} \subset \overline{B_r(K)}$, it follows that
\[
\norm{\bb x_k - T(\bb x, \bb x_k,\tau)} &= \tau \norm{ \begin{pmatrix} y+y_k \\ x^2+xx_k+x_k^2+a\end{pmatrix}} \\
&\leq \tau \underbrace{\max_{\substack{\bb x \in \overline{B_r(K)}\\\bb x_k \in K}} \norm{\begin{pmatrix} y+y_k \\ x^2+xx_k+x_k^2+a\end{pmatrix}}}_{=:M_{K,r}<\infty}.
\] Thus, the image of \eqref{eq:cMap} maps to its domain $\overline{B_r(\bb x_k)}$ if $\tau M_{K,r}\leq r$. Moreover, for $\bb x=\begin{pmatrix} x \\ y\end{pmatrix}, \bb u=\begin{pmatrix} u \\ v\end{pmatrix}$, we have the following estimate,
\begin{align}
\norm{T(\bb x,\bb x_k,
\tau) - T(\bb u,\bb x_k, \tau)} &= \tau\norm{\begin{pmatrix} y-v \\ x^2-u^2+(x-u)x_k\end{pmatrix}} \nonumber \\
&\leq \tau \norm{\begin{pmatrix} 0 & 1 \\ x+u+x_k & 0\end{pmatrix}\begin{pmatrix} x-u \\ y-v\end{pmatrix}} \nonumber \\
&\leq \tau \underbrace{\max_{ \substack{\bb x, \bb u \in \overline{B_r(K)}\\ \bb x_k \in K}}\norm{\begin{pmatrix} 0 & 1 \\ x+u+x_k & 0\end{pmatrix}}}_{=:N_{K,r}<\infty}\norm{\bb x-\bb u} \label{ineq:tDiff}
\end{align} which implies $T$ is contractive provided $\tau N_{K,r}<1$. So picking $\tau^*<\min\{\frac{r}{M_{K,r}}, \frac{1}{N_{K,r}}\}$, the map \eqref{eq:cMap} is contractive for any fixed $\tau \leq \tau^*$ and $\bb x_k \in K$. 
\end{proof}
Thus, we can consider the fixed point as a function $\bb x^*(\bb x_k,\tau)$ for small enough $\tau$. Before showing the UBD property for \eqref{ECDisc1}, we will also need the following claim.
\begin{claim}
$\bb x^*$ is continuous in $\bb x_k \in K$ and locally smooth in $\tau$ provided $\tau<\tau_1$ for some $\tau_1>0$ (depending only on $K$ and $r$). \label{claim:ctsSmo}
\end{claim}
\begin{proof}
First note that by inequality \eqref{ineq:tDiff}, if $\tau < \tau^*$ and $\bb x_k, \bb u_k\in K$,

\[
\norm{\bb x^*(\bb x_k,\tau) - \bb x^*(\bb u_k,\tau)} &= \norm{T(\bb x^*(\bb x_k,\tau),\bb x_k, \tau) - T(\bb x^*(\bb u_k,\tau),\bb u_k, \tau)} \\
&\leq \norm{T(\bb x^*(\bb x_k,\tau),\bb x_k, \tau) - T(\bb x^*(\bb u_k,\tau),\bb x_k, \tau)} \\
&\hskip 5mm+ \norm{T(\bb x^*(\bb u_k,\tau),\bb x_k, \tau) - T(\bb x^*(\bb u_k,\tau),\bb u_k, \tau)} \\
&\leq \tau N_{K,r}\norm{\bb x^*(\bb x_k,\tau)-x^*(\bb u_k,\tau)} \\
&\hskip 5mm+ \norm{T(\bb x^*(\bb u_k,\tau),\bb x_k, \tau) - T(\bb x^*(\bb u_k,\tau),\bb u_k, \tau)} \\
\Rightarrow \norm{\bb x^*(\bb x_k,\tau) - \bb x^*(\bb u_k,\tau)} &\leq \frac{1}{1-\tau N_{K,r}}\norm{T(\bb x^*(\bb u_k,\tau),\bb x_k, \tau) - T(\bb x^*(\bb u_k,\tau),\bb u_k, \tau)}
\]
As $T$ is continuous in $\bb x_k$, the above inequality implies that $\bb x^*$ is continuous in $\bb x_k\in K$. Now to show $\bb x^*$ is locally smooth in $\tau$, consider the function $F: \overline{B_r(\bb x_k)} \times [0,\tau^*]\rightarrow \mathbb{R}^2$ for fixed $\bb x_k \in K$,
\[
F(\bb x,\tau):= \bb x-T(\bb x,\bb x_k,\tau),
\] which is clearly smooth in both $\bb x$ and $\tau$. Thus by implicit function theorem, if the matrix $I-\frac{\partial T}{\partial \bb x}(\bb x,\bb x_k,\tau)$ is invertible at $\bb x=\bb x^*$ and $\tau < \tau^*$, then $\bb x=\bb x^*(\bb x_k,\tau)$ is locally smooth in $\tau$. Indeed,  the matrix is invertible for small enough $\tau$ as follows. Since
\[
I-\frac{\partial T}{\partial \bb x}(\bb x,\bb x_k,\tau) = I-\tau \underbrace{\begin{pmatrix} 0 & 1 \\ 2x+x_k & 0\end{pmatrix}}_{=:A(\bb x, \bb x_k)},
\] and the maximum $\displaystyle \alpha := \max_{\bb x, \bb x_k\in \overline{B_r(K)}} \norm{A(\bb x, \bb x_k)}$ exists, then $\tau \norm{A(\bb x,\bb x_k)} \leq \tau \alpha < 1$ provided if $\tau<\tau_1:=\min\{\tau^*, \frac{1}{\alpha}\}$ and the matrix $I-\frac{\partial T}{\partial \bb x} = I-\tau A$ would be invertible with $\norm{(1-\tau A(\bb x, \bb x_k))^{-1}} \leq \frac{1}{1-\tau \alpha}$.
\end{proof}

It remains to show that \eqref{ECDisc1} has uniformly bounded displacements. By Claim \ref{claim:ctsSmo}, $\bb x^* = \bb x^*(\bb x_k, \tau)$ is continuous in $\bb x_k$ and is $C^1$ (in fact smooth) in $\tau$ for $\tau\leq\tau_1$. So by implicit differentiation in $\tau$,
\[
&\bb x^*(\bb x_k, \tau) = T(\bb x^*(\bb x_k, \tau), \bb x_k, \tau) \\
\Rightarrow& \frac{\partial \bb x^*}{\partial \tau} = \begin{pmatrix} y^*+y_k + \tau \frac{\partial y^*}{\partial \tau} \\ {x^*}^2+x^*x_k+x_k^2+a+\tau\left(2x^*\frac{\partial x^*}{\partial \tau}+\frac{\partial x^*}{\partial \tau} x_k\right)\end{pmatrix} \\
\Rightarrow& (I-\tau A(\bb x^*,\bb x_k))\frac{\partial \bb x^*}{\partial \tau} = \begin{pmatrix} y^*+y_k \\ {x^*}^2+x^* x_k + x_k^2+a\end{pmatrix}.
\] Moreover, since $\bb x^*(\bb x_k,\tau) \in \overline{B_r(\bb x_k)} \subset \overline{B_r(K)}$, then, as before in the proof of Claim \ref{claim:ctsSmo}, $I-\tau A(\bb x^*,\bb x_k)$ is invertible with $\norm{(1-\tau A(\bb x^*,\bb x_k))^{-1}} \leq \frac{1}{1-\tau \alpha}$ if $\tau\leq\tau_1$. Thus, the derivative of $\bb x^*$ with respect to $\tau$ can be bounded as,
\[
\norm{\frac{\partial \bb x^*}{\partial \tau}} &\leq \norm{(1-\tau A(\bb x^*,\bb x_k))^{-1}}\norm{\begin{pmatrix} y^*+y_k \\ {x^*}^2+x^* x_k + x_k^2+a\end{pmatrix}} \\
&\leq \frac{1}{1-\tau \alpha} \underbrace{\max_{\tau \in [0,\tau_1]}\max_{\bb x_k\in K}\norm{\begin{pmatrix} y^*+y_k \\ {x^*}^2+x^* x_k + x_k^2+a\end{pmatrix}}}_{=: L_K<\infty} \leq \frac{L_K}{1-\tau_1 \alpha}.
\]
Finally to show the UBD property, since $\bb x_{k+1}=\bb x^*(\bb x_k, \tau)$ and $\bb x_k = \bb x^*(\bb x_k,0)$, the displacement between $\bb x_{k+1}$ and $\bb x_{k}$ can be uniformly bounded by any $r>0$ as
\[
\norm{\bb x_{k+1}-\bb x_k} &= \tau\norm{ \frac{\bb x^*(\bb x_k, \tau) - \bb x^*(\bb x_k, 0)}{\tau}} \leq \tau \max_{\tau \in [0,\tau_1]}\max_{\bb x_k\in K} \norm{\frac{\partial \bb x^*}{\partial \tau}(\bb x_k, \tau)}<r,
\] provided if $\tau<\tau_c:=\min\{\tau_1, \frac{r(1-\tau_1 \alpha)}{L_K}\}$. This shows \eqref{ECDisc1} has the uniformly bounded displacement property for any compact subset $K\subset \mathbb{R}^2$ and $r>0$.

\section{Numerical verification of convergence order of three conservative methods}
\label{sec:convOrder}

Tables \ref{tab:conv1}-\ref{tab:conv3} shows the convergence order of the three conservative methods \eqref{ECDisc1}, \eqref{ECDisc2}, \eqref{ECDisc3} for the variable $y$ at a fixed time $T=N\tau$, where the initial conditions were chosen to be $(x_0,y_0)\approx(0.571,-8.331\times10^{-3})$ and other initial values were bootstrapped using the standard 4-th order Runge-Kutta method. We observed that both the 1-step and 2-step method were second-order accurate and the 3-step method was third order accurate.

\begin{table}[H]
\label{tab:conv1}
\noindent \begin{centering}
\begin{tabular}{|c|c|c|c|c|}
\hline 
$\tau$ & $N$ & $y_N^\tau$ & $y_N^\tau-y_N^{\tau/2}$ & $\log_2\left[\dfrac{y_N^\tau-y_N^{\tau/2}}{y_N^{\tau/2}-y_N^{\tau/4}}\right]$
\tabularnewline
\hline 
\hline 
$1.000\times10^{-2}$ & 200 & -0.815259441420454 & $4.0669\times10^{-5}$ & 1.999787\tabularnewline
\hline 
$5.000\times10^{-3}$& 400 & -0.815218772324505 & $1.0169\times10^{-5}$ & 1.999947\tabularnewline
\hline 
$2.500\times10^{-3}$ & 800 & -0.815208603548917 & $2.5423\times10^{-6}$ & 1.999987\tabularnewline
\hline 
$1.250\times10^{-3}$ & 1600 & -0.815206061261177 & $6.3558\times10^{-7}$ & 1.999997\tabularnewline
\hline 
$6.250\times10^{-4}$ & 3200 & -0.815205425683374 & $1.5889\times10^{-7}$ & -\tabularnewline
\hline 
$3.125\times10^{-4}$ & 6400 & -0.815205266788567 & - & -\tabularnewline
\hline 
\end{tabular}
\par\end{centering}
\caption{Second order convergence of the 1-step conservative method of \eqref{ECDisc1}.}\vskip -5mm
\end{table}

\begin{table}[H]
\label{tab:conv2}
\noindent \begin{centering}
\begin{tabular}{|c|c|c|c|c|}
\hline 
$\tau$ & $N$ & $y_N^\tau$ & $y_N^\tau-y_N^{\tau/2}$ & $\log_2\left[\dfrac{y_N^\tau-y_N^{\tau/2}}{y_N^{\tau/2}-y_N^{\tau/4}}\right]$
\tabularnewline
\hline 
\hline 
$1.000\times10^{-2}$ & 200 & -0.815422021628171 & $1.6258\times10^{-4}$ & 1.99916\tabularnewline
\hline 
$5.000\times10^{-3}$& 400 & -0.815259441420454 & $4.0669\times10^{-5}$ & 1.99979\tabularnewline
\hline 
$2.500\times10^{-3}$ & 800 & -0.815218772324505 & $1.0169\times10^{-5}$ & 1.99995\tabularnewline
\hline 
$1.250\times10^{-3}$ & 1600 & -0.815208603548917 & $2.5423\times10^{-6}$ & 1.99999\tabularnewline
\hline 
$6.250\times10^{-4}$ & 3200 & -0.815206061261177 & $6.3558\times10^{-7}$ & -\tabularnewline
\hline 
$3.125\times10^{-4}$ & 6400 & -0.815205425683374 & - & - \tabularnewline
\hline 
\end{tabular}
\par\end{centering}
\caption{Second order convergence of the 2-step conservative method of \eqref{ECDisc2}.}\vskip -5mm
\end{table}

\begin{table}[H]
\label{tab:conv3}
\noindent \begin{centering}
\begin{tabular}{|c|c|c|c|c|}
\hline 
$\tau$ & $N$ & $y_N^\tau$ & $y_N^\tau-y_N^{\tau/2}$ & $\log_2\left[\dfrac{y_N^\tau-y_N^{\tau/2}}{y_N^{\tau/2}-y_N^{\tau/4}}\right]$
\tabularnewline
\hline 
\hline 
$1.000\times10^{-2}$ & 200 & -0.815187526809099 & $1.4491\times10^{-5}$ & 2.9320\tabularnewline
\hline 
$5.000\times10^{-3}$& 400 & -0.815203036844703 & $1.8988\times10^{-6}$ & 2.9659\tabularnewline
\hline 
$2.500\times10^{-3}$ & 800 & -0.815204935644390 & $2.4302\times10^{-7}$ & 2.9829\tabularnewline
\hline 
$1.250\times10^{-3}$ & 1600 & -0.815205178665148 & $3.0739\times10^{-8}$ & 2.9911\tabularnewline
\hline 
$6.250\times10^{-4}$ & 3200 & -0.815205209404162 & $3.8661\times10^{-9}$ & -\tabularnewline
\hline 
$3.125\times10^{-4}$ & 6400 & -0.815205213270296 & - & -\tabularnewline
\hline 
\end{tabular}
\par\end{centering}
\caption{Third order convergence of the 3-step conservative method of \eqref{ECDisc3}.}\vskip -5mm
\end{table}

\end{document}